\DeclareMathOperator{\Tr}{{Tr}} 
\DeclareMathOperator{\Nm}{{N}}
\begin{document} 
\newcommand{\changed}[1]{\textcolor{red}{#1}\message{C}}
\newcommand{\anfangchanged}{\color{red}\message{C}}
\newcommand{\finchanged}{\color{black}}
\newcommand{\tensor}{{\otimes}}
  \newcommand{\cS}{{\mathcal S}}
  \newcommand{\cU}{{\mathcal U}}
  \newcommand{\cF}{{\mathcal F}}
  \newcommand{\cL}{{\mathcal L}}
\newcommand{\Ic}{{\mathcal I}}
\let\paragraph\S
\renewcommand{\S}{{\mathbb S}}
\newcommand{\Octc}{{\Oct_\C}}
\newcommand{\C}{{\mathbb C}}
\renewcommand{\H}{{\mathbb H}}
\newcommand{\N}{{\mathbb N}}
\renewcommand{\O}{{\mathcal O}}
\newcommand{\Z}{{\mathbb Z}}
\renewcommand{\P}{{\mathbb P}}
\newcommand{\R}{{\mathbb R}}
\newcommand{\dimc}{\mathop{dim}_{\C}}
\newcommand{\dimr}{\mathop{dim}_{\R}}
\newcommand{\Lie}{\mathop{Lie}}
\newcommand{\Aut}{\mathop{{\rm Aut}}}
\newcommand{\dom}{D}
\newcommand{\Oct}{{\mathbb O}}
\newcommand{\Gc}{G_{\C}}
\newcommand{\GC}{G_{\C}}
\newcommand{\Hc}{\H_{\C}}
\newcommand{\OC}{\Oct\tensor_\R\C}  
\newcommand{\WC}{{W\tensor_\R\C}}
\newcommand{\X}{\underline{\overline{X}}}
\newcommand{\cdiv}{\mathop{cdiv}}
\newcommand{\SOct}{{\mathbb S}_{\Oct}}
\let\Realpart\Re
\renewcommand{\Re}{\mathop{\Realpart e}}
\let\Impart\Im
\renewcommand{\Im}{\mathop{\Impart m}}
\newcommand{\Offen}{U}
\newtheorem{lemma}{Lemma}[section]
\newtheorem{definition}[lemma]{Definition}
\newtheorem*{claim}{Claim}
\newtheorem{corollary}[lemma]{Corollary}
\newtheorem*{Conjecture}{Conjecture}
\newtheorem*{SpecAss}{Assumptions}
\newtheorem{example}[lemma]{Example}
\newtheorem*{remark}{Remark}
\newtheorem*{observation}{Observation}
\newtheorem*{fact}{Fact}
\newtheorem*{warning}{Warning}
\newtheorem*{remarks}{Remarks}
\newtheorem{proposition}[lemma]{Proposition}
\newtheorem{theorem}[lemma]{Theorem}
\newcommand{\notiz}[1]{{\tt To do:}{\color{red}\tt #1}{\message{XX: #1}}}
\newcommand{\XXX}{XXX}
\numberwithin{equation}{section}
\def\labelenumi{\rm(\roman{enumi})}
\title{%
  Invariants and Automorphisms for Slice Regular Functions:
The Octonionic Case}
\author {Cinzia Bisi \& J\"org Winkelmann}
\begin{abstract}
  We study automorphisms and invariants for the algebra $\Oct$ of octonions
  and octonionic slice regular functions $f:\Oct\to\Oct$.
  \end{abstract}
\subjclass{}%
\address{%
Cinzia Bisi \\
Department of Mathematics and Computer Sciences\\
Ferrara University\\
Via Machiavelli 30\\
44121 Ferrara \\
Italy
}
\email{bsicnz@unife.it \newline
 ORCID : 0000-0002-4973-1053
}
\address{%
J\"org Winkelmann \\
IB 3/111\\
Lehrstuhl Analysis II \\
Fakult\"at f\"ur Mathematik\\
Ruhr-Universit\"at Bochum\\
44780 Bochum \\
Germany\\
}
\email{joerg.winkelmann@rub.de\newline
  ORCID: 0000-0002-1781-5842
}
\thanks{
The two authors were partially supported by GNSAGA of INdAM.
C. Bisi was also partially supported by PRIN \textit{Variet\'a reali e complesse:
geometria, topologia e analisi armonica}. 
}
\maketitle
\tableofcontents
\section{Introduction}

  We are concerned with ``slice regular'' functions on the algebra
  of octonions.
  (See \cite{DM},\cite{CS} for general information on octonions.)
  
The notion of slice regular functions on octonions is a generalization
of the corresponding notion for quaternions.

There is a long history of studying quaternions going back to Hamilton
which has many applications e.g. in geometry and physics.

In view of the theorem of Kervaire and Milnor, which states that $\R$, $\C$,
the quaternion algebra $\H$ and the algebra $\Oct$ of octonions are
the only finite-dimensional real division algebras,
the algebra of octonions $\Oct$
may be regarded as the ``small brother''
\footnote{``small'' in the sense that it has weaker properties.
  In particular $\Oct$ is an algebra which is
  only alternative and not associative.
  Of course $\dim(\Oct)>\dim(\H)$.}
of the quaternionic algebra $\H$,
raising the question up to which degree results on quaternions
remain true in the octonion set up.

This article adresses this issue with regard to ``slice regular fcuntions''.

For the algebra $\H$ of quaternion
numbers the theory of slice regular functions was introduced by G. Gentili and D. Struppa in two seminal papers in 2006 \cite{GS1} and in \cite{GS2}.
They used the fact that $\forall I \in \mathbb{S}_{\mathbb{H}} = \{ J \in \mathbb{H} \,\, | \,\, J^2=-1 \}$ the real subalgebra $\mathbb{C}_{I}$ generated by 1 and $I$ is isomorphic to $\mathbb{C}$ and they decomposed the algebra $\mathbb{H}$ into a "book-structure" via these complex "slices":
$$
\mathbb{H} =\cup_{I \in \mathbb{S}_{\mathbb{H}} }   \mathbb{C}_I
$$
On an open set $\Omega \subset \mathbb{H},$ they defined a differentiable function $f \colon \Omega \to \mathbb{H}$ to be (Cullen or) slice regular if, for each $I \in \mathbb{S}_{\mathbb H},$ the restriction of $f$ to $\Omega_I= \Omega \cap \mathbb{C}_I$
is a holomorphic function from $\Omega_I$ to $\mathbb{H},$ both endowed with the complex structure defined by left multiplication by $I.$ This definition contains all convergent power series of the form:
$$
\sum_{n \in \mathbb{N}_0} w^n a_n 
$$
with $\{ a_n \}_{n \in \mathbb{N}_0} \subset \mathbb{H}.$ \\
On the algebra $\mathbb{O}$ of octonion numbers
the same approach may be used and an analogous book-structure with complex slices holds true as well as the power series expansion in zero with octonionic variable and coefficients, for slice regular functions over $\mathbb{O}$.
\\

There is another, different, but equivalent approach
to slice regular functions introduced by R. Ghiloni
and A. Perotti in 2011, \cite{GP}.
For an alternative $*-$algebra $A$ over $\mathbb{R}$ they
use ``stem functions'' with values in the
complexified algebra $A \otimes_{\mathbb{R}} \mathbb{C},$
denoted by $A_{\mathbb{C}}$. \\
The algebra of octonions $\Oct$ is an alternative $*$-algebra,
so this theory applies to the octonions.
In this article we mostly use this approach.

Let us denote the elements of $\Octc$
as $a+\iota b$ where $a, b \in \Oct$ and $\iota$ is to be considered as the imaginary unit of $\mathbb{C}$ distinguished by the $i$ that appears in
the usual basis for $\Oct$. \\
For any slice regular function
$f:\Oct\to \Oct$, and for any $I \in \S_\Oct$,
(with $\S_\Oct=\{x\in \Oct:x^ 2=-1\}$)
the restriction $f \colon \mathbb{C}_I \to \Oct$
can be lifted through the  map
$\phi_I \colon \Oct_{\mathbb{C}} \to \Oct$,
$\phi_I (a+\iota b) := a+I b$
to a map $\C\cong\C_I\to\Octc$
and it turns out that the lift does not depend on $I.$  In other words, there exists a holomorphic function $F \colon \mathbb{C} \to \Octc$
which makes the following diagram commutative for all $I \in \S_{\Oct}.$ \\
$$
\begin{tikzcd}
 & \mathbb{C} \arrow[r, "F"] \arrow[d, "\phi_I"']
& \Octc \arrow[d, "\phi_I"]  \\
& \Oct  \arrow[r, "f"] & \Oct  
\end{tikzcd}
$$
Conversely if a function $f:\Oct\to\Oct$ admits such a lift, it is
slice regular.

The class of ``slice regular functions'' includes polynomials of the
form $P(w)=\sum_{k=0}^d w^k c_k$ (with $c_k\in\Oct$) and similar
power series $\sum_{k=0}^\infty w^kc_k$ (if convergent).
In particular, using power series development, classical
functions like $\exp$, $\sin$, $\cos$, $\cosh$ extend to
slice regular functions on $\Oct$.

  This notion of ``slice regularity'' easily generalizes to the case where
we consider functions which are defined not globally, but only on a
suitable open subset (``axially symmetric domain'').

For instance, the power series $\sum_{k=1}^{+\infty} w^k\frac{(-1)^{k+1}}{k}$
of the logarithmic function $w\mapsto \log(1+w)$
may be used to define a slice regular function on the unit ball in the
algebra of octonions.

After the first definitions were given, the theory of slice regular functions knew a big development: see, among the others, the following references
\cite{GSSMono}, \cite{BW2}, \cite{BDMW}, \cite{BW1}, \cite{BW3},\cite{BC23} \cite{AB2},\cite{AB1}, \cite{BS}, \cite{BG}, \cite{BM}. \\

The ``essential'' properties of a number or a function should not
be changed by symmetries.

  As a $*$-algebra, the algebra of octonions $\Oct$ admits an 
{\em antiinvolution} $x\mapsto \bar x$ which commutes with all
automorphisms. As a consequence, $\Nm(x)=x\bar x$ and $\Tr(x)=x+\bar x$ are
{\em invariant} under automorphisms. In fact, 
we have the equivalence (for $z,w\in\Oct$):
\[
\Nm(z)=\Nm(w)\text{ and } \Tr(z)=\Tr(w)
\iff \exists \,\,\, \phi\in \Aut(\Oct):\phi(z)=w
\]
where $\phi$ is an automorphism of $\Oct$ as an $\R$-algebra.
(See \cite{DentoniSce}, $L_4$, p.260.)

This raises the question whether a similar correspondence holds
not only for the elements in the algebra, but also for slice-regular
functions of this algebra.

As it turns out, essentially this is true, but only via the associated
stem functions and up to a condition on the multiplicity with which
  values in the center of $\Octc$ are assumed (see \paragraph\ref{ss-cdiv}).
  To state the latter condition, in \paragraph\ref{ss-cdiv} 
  we use the notion of a
  ``{\em central divisor}'' $\cdiv$
  which we introduced in \cite{BWH}.
  We also assume that $f,h$ are not slice preserving, i.e., the image
  of their stem functions is not contained in the center of $\Octc$.
  Using this, we prove (Theorem~\ref{mainth}) that, given two slice
  regular functions $f$ and $h$ with stem function $F$ and $H$,
  they have the same invariants ($\Nm,\Tr,\cdiv$) if and only if
  there is a holomorphic map $\phi$ with values in $\Aut(\Octc)$
  such that
  \[
  \forall z: F(z)=\phi(z)\left(H(z)\right)
  \]

  The complete statement is in \paragraph 2, in \paragraph 3 we discuss
  notions like $\Nm$, $\Tr$ and do some preparations, in \paragraph 4 we lay out the
  strategy of the proof of the main theorem. The remainder of the paper
  consists of the actual proofs.

  \begin{remark}
  Here we discuss octonionic slice regular functions.  
In an earlier paper (\cite{BWH}) we obtained similar results for
 the algebra of slice regular functions with values in the algebra of quaternions $ \mathbb{H}$
 or the Clifford algebra $ \mathbb{R}_3 =\mathbb{H} \oplus \mathbb{H}$.

 We would like to emphasize that, while the results are similar, for the proofs
 we need
 quite different methods in the two cases (octonions versus
 quaternions (and $\R_3$)).
\end{remark}
    \section{Main theorem}

  Our Main Theorem is the following :
\begin{theorem}\label{mainth}
  Let $\Oct$ be the algebra of octonions,
  $\Octc=\Oct\tensor_\R\C$ its complexification and
    $\GC=Aut(\Octc)$ the group of
    $\C$-algebra automorphisms of $\Octc$. Let $\dom\subset\C$ be a symmetric domain
  and let $\Omega_D\subset \Oct$ denote the corresponding axially symmetric
  domain (as defined in Definition~\ref{def-sym}).

  Let $f,h:\Omega_D\to \Oct$ be slice regular functions
  and let $F,H:\dom\to\Octc$
  denote the corresponding stem functions.
  \begin{enumerate}[label=\alph*)]
  \item
  Assume that neither $f$ nor $h$ is slice preserving.

   Then the following are equivalent:
  \begin{enumerate}[label=(\roman*)]
  \item
    $f$ and $h$ have the same invariants $\cdiv$, $\Tr$, $\Nm$.
  \item
    $F$ and $H$ have the same invariants $\cdiv$, $\Tr$, $\Nm$.
  \item
    $\cdiv(F)=\cdiv(H)$ and for every $z\in\dom$ there exists an element
    $\alpha\in Aut(\Octc)=\GC$ such that $F(z)=\alpha(H(z))$.
\item
        There is a holomorphic map $\phi:\dom\to\GC$
    such that $F(z)=\phi(z)\left(H(z)\right)\ \forall z\in\dom$.
  \end{enumerate}
  \item
  Assume that $f$ is slice-preserving.
    Then the following are equivalent:
  \begin{enumerate}[label=(\roman*)]
  \item
    $f=h$.
  \item
    $F=H$.
  \item
    For every $z\in\dom$ there exists an element
    $\alpha\in Aut(\Octc)=\GC$ such that $F(z)=\alpha(H(z))$.
\item
    There is a holomorphic map $\phi:\dom\to\GC$
    such that $F(z)=\phi(z)\left(H(z)\right)\ \forall z\in\dom$.
  \end{enumerate}
    \end{enumerate}
\end{theorem}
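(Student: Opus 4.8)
The plan, for part (a), is to prove $(i)\Leftrightarrow(ii)$, $(ii)\Rightarrow(iii)$, $(iii)\Rightarrow(iv)$, $(iv)\Rightarrow(iii)$ and $(iii)\Rightarrow(ii)$; part (b) will come out of the same ingredients. The equivalence $(i)\Leftrightarrow(ii)$ is just the dictionary between a slice regular function and its stem function, under which the invariants $\Nm,\Tr,\cdiv$ of $f$ are by definition those of $F$. The implications $(iv)\Rightarrow(iii)$ and $(iii)\Rightarrow(ii)$ are soft: every $\alpha\in\GC$ commutes with the antiinvolution $x\mapsto\bar x$, hence preserves $\Nm$ and $\Tr$, and it carries the centre of $\Octc$ onto itself and its complement onto itself; for $(iv)\Rightarrow(iii)$ one writes in addition $\Im F(z)=\phi(z)\bigl(\Im H(z)\bigr)$ — valid since $\phi(z)$ fixes the centre pointwise and $\Tr F=\Tr H$ — and observes that, as $z\mapsto\phi(z)$ is a holomorphic family of invertible linear maps, $\Im F$ and $\Im H$ vanish to the same order at every point, i.e.\ $\cdiv(F)=\cdiv(H)$.

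The implication $(ii)\Rightarrow(iii)$ rests on an elementwise statement over $\Octc$: two elements $p,q\in\Octc$ with $\Nm(p)=\Nm(q)$ and $\Tr(p)=\Tr(q)$ are $\GC$-conjugate \emph{unless} one of them is central while the other has nonzero isotropic imaginary part. This follows from the orbit structure of $\GC=\Aut(\Octc)\cong G_2(\C)$ on $\Im\Octc\cong\C^7$: automorphisms fix the real part $\Re(\cdot)=\tfrac12\Tr(\cdot)$ and preserve the norm form $Q$ on $\Im\Octc$, while $G_2(\C)$ acts transitively on each nonzero level set $\{Q=c\}$ and on the punctured isotropic cone, so the only obstruction is the splitting of $\{Q=0\}$ into the origin and the punctured cone, which $\Nm$ and $\Tr$ cannot detect. (Over $\R$ this never happens, $Q$ being definite; it is exactly what makes the complexified problem delicate.) The hypothesis $\cdiv(F)=\cdiv(H)$ removes the obstruction pointwise: since $f$ is not slice preserving, $\cdiv(F)$ is a genuine effective divisor with discrete support, and $F(z)$ is central iff $z\in\mathrm{supp}\,\cdiv(F)=\mathrm{supp}\,\cdiv(H)$ iff $H(z)$ is central; when both are central, $\Tr F(z)=\Tr H(z)$ forces $F(z)=H(z)$, and otherwise the elementwise statement applies. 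Together with $\cdiv(F)=\cdiv(H)$, which is part of $(ii)$, this gives $(iii)$.

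The core of the theorem is $(iii)\Rightarrow(iv)$, the passage from pointwise to holomorphic. First I would normalise. Since $\phi(z)$ fixes the centre and $\Tr F=\Tr H$, it suffices to find a holomorphic $\phi$ with $\phi(z)\bigl(\Im H(z)\bigr)=\Im F(z)$; and since each $\phi(z)$ is $\C$-linear, dividing through by a holomorphic function $p$ on the Stein domain $D$ with $\operatorname{div}(p)=\cdiv(F)=\cdiv(H)$ reduces matters to the case of two nowhere vanishing holomorphic maps $D\to\Im\Octc\setminus\{0\}$, still with equal $Q$. Call them $F,H$ again and set
\[
E=\bigl\{(z,\alpha)\in D\times\GC\ :\ \alpha\bigl(H(z)\bigr)=F(z)\bigr\},
\]
with $\pi\colon E\to D$ the projection; a holomorphic section of $\pi$ is exactly the $\phi$ sought, and $\pi$ is surjective because two nonzero elements of $\Im\Octc$ with the same value of $Q$ are $\GC$-conjugate. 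A short rank computation, using $Q\circ F=Q\circ H$, shows $\pi$ is a holomorphic submersion; over the complement of the discrete set $N=(Q\circ H)^{-1}(0)$ it is a holomorphic fibre bundle with fibre $\cong SL(3,\C)$, the fibre over a point of $N$ being a torsor under the (connected) stabiliser of an isotropic vector, and if $Q\circ H\equiv 0$ the bundle is globally of this latter type. In every case $\pi$ is a stratified holomorphic fibre bundle over a Stein base whose fibres are connected complex homogeneous spaces, hence Oka manifolds. Since $D$ is homotopy equivalent to a one-dimensional complex and the fibres are connected, the only relevant obstruction, lying in $H^1\bigl(D;\pi_0(\text{fibre})\bigr)$, vanishes, so a continuous section of $\pi$ exists; by the Oka--Grauert principle in its stratified form (Forstneri\v{c}) it can be deformed to a holomorphic section. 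Undoing the normalisation produces the desired $\phi$. Part (b) is then immediate: $F=H$ trivially implies all four statements, while conversely $(iii)$ already forces $F=H$ — from any $\alpha_z$ with $F(z)=\alpha_z(H(z))$ and $F$ centre-valued (as $f$ is slice preserving), $H(z)$ is central with $\Tr H(z)=\Tr F(z)$, so $H=F$ — and thus all of $(i)$--$(iv)$ in (b) collapse to $F=H$.

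The step I expect to be the main obstacle is this globalisation near the degenerate locus: checking that $\pi\colon E\to D$ really satisfies the hypotheses of the (stratified) Oka principle at the points where the imaginary parts of $F$ and $H$ become isotropic — and, before the normalisation, at the central points, where the relevant bundle fails to be locally trivial — and producing the continuous section there. The central-divisor hypothesis does essential work throughout: without $\cdiv(F)=\cdiv(H)$ even pointwise conjugacy can fail (a central value versus a nonzero isotropic imaginary value), and one could not perform the normalisation that clears the central points, so the argument would not get off the ground.
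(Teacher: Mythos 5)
Your proposal tracks the paper's overall strategy closely: normalise to traceless imaginary parts, factor out the shared central divisor to get nowhere-vanishing maps into $W_\C\setminus\{0\}$, form the bundle $V=\{(\alpha,z)\in\GC\times D: \alpha(H(z))=F(z)\}$, obtain a continuous section, and upgrade it to a holomorphic one by the Oka principle. The soft implications $(i)\Leftrightarrow(ii)$, $(iv)\Rightarrow(iii)\Rightarrow(ii)$, the pointwise conjugacy criterion behind $(ii)\Rightarrow(iii)$, and part (b) are all sound and essentially match what the paper does (Corollary~\ref{aut-if-n-tr}, Lemma~\ref{div-unchanged}, Lemma~\ref{main-slice-pre}). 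But the decisive step $(iii)\Rightarrow(iv)$ contains a genuine gap in the argument producing the \emph{continuous} section. You argue that since $D$ retracts to a one-dimensional complex and the fibers of $\pi:V\to D$ are connected, the only obstruction is in $H^1(D;\pi_0(\text{fiber}))$ and vanishes. That obstruction theory is valid for a \emph{locally trivial} fiber bundle; $\pi$ is only a \emph{stratified} bundle, with fiber $SL_3(\C)$ over $D\setminus L$ jumping to the non-reductive stabilizer $SL_2(\C)\ltimes U$ of an isotropic vector over the discrete set $L$ where $Q\circ H$ vanishes. Connectedness of the fiber does give a section over $D\setminus L$ (Proposition~\ref{top-section}), but to fill in the section across a point $p\in L$ one has to compare, over a small circle around $p$, the global section from $D\setminus L$ with a local section near $p$, and show the two are homotopic as sections. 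That step requires the generic fiber $SL_3(\C)$ to be \emph{simply-connected}, not just connected — this is exactly Propositions~\ref{app} and~\ref{global-con}. Your argument never uses $\pi_1(SL_3(\C))=1$; it would equally well "apply" to the quaternionic case, where the generic fiber is $\C^*$ and the paper explicitly states that different methods are needed. You yourself flagged "globalisation near the degenerate locus" as the main obstacle — and indeed that is precisely the point at which your proof is missing.

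A secondary, less serious, deviation: for the holomorphic upgrade you invoke Forstneri\v{c}'s stratified Oka principle off the shelf, together with the fact that complex homogeneous spaces of complex Lie groups are Oka manifolds. The paper instead proves directly that $\pi:V\to D$ is an \emph{elliptic map} by constructing a dominating fiber spray from the Grassmannian of isotropy Lie algebras, using the key uniform-dimension fact for isotropy groups on $W_\C\setminus\{0\}$ (Corollary~\ref{dim-c-isotropy}, Proposition~\ref{oka}). Your route via "stratified fiber bundle with Oka fibers" is plausible as an alternative to the explicit spray, but it needs to be checked — in particular that the relevant Forstneri\v{c} theorem applies to the stratification by the single discrete set $L$ — whereas the paper's construction is self-contained. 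Note also that the paper's proof is organised slightly differently at this stage: the case $F(\dom)\subset\{B=0\}$ is handled separately by lifting through the principal bundle $\GC\to\GC/I$ (Proposition~\ref{F-in-null-cone}), rather than being absorbed into a single stratified statement; the substance is the same but the bookkeeping is cleaner.
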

  
\begin{remark}
  The notion of a ``central divisor'' is defined only if the function
  is not slice preserving. This is similar to the
  classical situation in complex analysis
  where the divisor of a holomorphic function is defined only if it is not
  constantly zero.
\end{remark}

The following example illustrates the need for a special treatment
of slice preserving functions. Namely, we show that if one of the
functions is slice preserving and the other one not, then they may share
the same
invariants
$\Nm$, $\Tr$ without being related by a map to the automorphism
group.

\begin{example}
  Let $D=\C\setminus\R$.
  Let $I,J\in\Oct$ with $I^2=-1=J^2$ and $IJ=-JI$.
  Define $H\equiv 0$ and $F:D\to\Octc$ as
  \[
  F(z)=\begin{cases}
    I\tensor z +J\tensor\iota z & \text{ if $\Im(z)>0$}\\
    I\tensor z -J\tensor\iota z & \text{ if $\Im(z)<0$}\\
  \end{cases}
  \]
  Then $F$ is a stem function with $\Nm(F)=0=\Tr(F)$, but evidently
  there is no holomorphic map $\phi:D\to\Aut(\Octc)$ with
  \[
  F(z)=\phi(z)\left(H(z)\right)=\phi(z)\left( 0 \right)
  \ \forall z\in\dom,
  \]
  since every automorphism of the algebra $\Octc$
  fixes the zero element $0$.
\end{example}

The main theorem (Theorem~\ref{mainth}) is proved in \paragraph\ref{pf-main}.
Details concerning conjugation, trace and norm are discussed
  in \paragraph\ref{sect-prep}.
For the definition of the ``central divisor'' $\cdiv$, see \paragraph
\ref{ss-cdiv}.

\section{Preparations}\label{sect-prep}

Here we collect basic facts and notions needed for our main result.
First we discuss conjugation, norm and trace,
then types of domains, then slice regular
functions and stem functions, followed by investigating conjugation,
norm and trace for function algebras.

We will formulate these preparations for arbitrary alternative algebras
even if we will apply them only to the case of octonions.

\subsection{Conjugation, norm and trace}

Let $A$ be an
alternative
$\R$-algebra  with $1$
and let $x\mapsto \bar x$ be an {\em
  antiinvolution}, i.e., an $\R$-linear map such that
$\overline{xy}=(\bar y)\cdot(\bar x)$
and $\overline{(\bar x)}=x$ for all $x,y\in A$.
(An $\R$-algebra with an antiinvolution is often called
$*$-algebra.)

\begin{definition}\label{def-n-tr}
  Given an $\R$-algebra $A$ with antiinvolution $x\mapsto\bar x$,
  we define:
  \begin{align*}
    \text{Trace: } & \Tr(x)=x+\bar x\\
    \text{Norm: } & \Nm(x)=x\bar x\\
  \end{align*}
\end{definition}

Consider
\[
C=\{x\in A: x= \bar x\}.
\]

We assume that $C$ is {\em central} and associates with
all other elements, i.e.,
\[
\forall c\in C, x,y\in A:  cx=xc\text{ and }c(xy)=(cx)y
\]
It is easy to verify that $C$ is a subalgebra
(under these assumptions, i.e., if $C$ is assumed to be central).

\begin{lemma}
  Under the above assumptions the following properties hold:
  
  \begin{enumerate}
    \item
      $\forall x\in \R:x=\bar x$.
        \item
    $\forall x\in A: \Nm(x),\Tr(x)\in C=\{ y\in A: y=\bar y\}$
  \item
    $\forall x\in A: x\bar x=\bar x x$
  \item
    $\forall x\in A: \Nm(x)=\Nm(\bar x)$.
  \item
    $\forall x,y\in A: \Nm(xy)=\Nm(x)\Nm(y)$.
  \end{enumerate}
\end{lemma}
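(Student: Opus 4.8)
The plan is to treat the five assertions in order, observing that the first four are purely formal consequences of the antiinvolution axioms together with the hypothesis that $C$ is central and associating, while only the multiplicativity of $\Nm$ in item~(5) uses alternativity in an essential way (through Artin's theorem that an alternative algebra is diassociative).

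For (1) I would first show $\bar 1=1$: from $\bar x=\overline{x\cdot 1}=\bar 1\cdot\bar x$ for all $x$, and the fact that conjugation is a bijection, $\bar 1$ is a left unit; symmetrically $\bar x=\overline{1\cdot x}=\bar x\cdot\bar 1$ shows it is a right unit, and a two-sided unit is unique (regardless of associativity), so $\bar 1=1$. By $\R$-linearity every $x=t\cdot 1$ with $t\in\R$ then satisfies $\bar x=t\bar 1=x$. For (2): $\overline{\Tr(x)}=\overline{x+\bar x}=\bar x+x=\Tr(x)$ and $\overline{\Nm(x)}=\overline{x\bar x}=\overline{\bar x}\,\bar x=x\bar x=\Nm(x)$, so both lie in $C$. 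For (3): by (2), $\Tr(x)\in C$ is central, hence $\bar x=\Tr(x)-x$ and $x$ commute; explicitly $x\bar x=x\Tr(x)-x^2=\Tr(x)x-x^2=\bar x x$. For (4): using (3), $\Nm(\bar x)=\bar x\,\overline{\bar x}=\bar x x=x\bar x=\Nm(x)$.

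For (5) --- the only substantial step --- I would argue inside an associative subalgebra. By Artin's theorem the subalgebra generated by $x$ and $y$ is associative; since $\bar x=\Tr(x)-x$ and $\bar y=\Tr(y)-y$ differ from $x,y$ only by the central, associating elements $\Tr(x),\Tr(y)\in C$, adjoining these elements does not destroy associativity, so $x,\bar x,y,\bar y$ (together with $\Nm(y)\in C$) all lie in a single associative subalgebra in which central elements still commute and associate with everything. There one computes
\[
\Nm(xy)=(xy)\overline{(xy)}=(xy)(\bar y\,\bar x)=x(y\bar y)\bar x=x\,\Nm(y)\,\bar x=\Nm(y)(x\bar x)=\Nm(x)\Nm(y).
\]
I expect the main obstacle to be precisely this reduction to an associative subalgebra: one must check that $\bar x,\bar y$ can be brought into the span of the subalgebra generated by $x,y$, which is exactly where the hypothesis ``$C$ central and associating'' (so that $\Tr$ takes values in $C$) is used. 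The alternative, carrying out the same regrouping directly via the Moufang identities, is more cumbersome and I would avoid it.
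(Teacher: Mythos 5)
The paper itself gives no argument for this lemma --- the proof is the single line ``See \cite{BWH}'' --- so there is no in-paper derivation to compare against. Your argument is correct and is, in essence, the standard one: items (1)--(4) are formal consequences of the antiinvolution axioms and the centrality of $C$, and item (5) is reduced to an associative computation via Artin's theorem.

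One point is worth making fully explicit, since you identify it as the crux but leave it implicit. The paper's hypothesis on $C$ only asserts the one-sided associator vanishing $c(xy)=(cx)y$, i.e.\ $(c,x,y)=0$. To conclude that adjoining $\Tr(x),\Tr(y)\in C$ to the (Artin) associative subalgebra generated by $x,y$ again yields an associative subalgebra, you need $C$ to lie in the full nucleus, i.e.\ all three associators $(c,x,y)$, $(x,c,y)$, $(x,y,c)$ vanish. This does follow, but only because $A$ is \emph{alternative}: in an alternative algebra the associator is an alternating trilinear form, so vanishing in one slot forces vanishing in all. With that observation in place, a routine computation (pulling nuclear central factors to one side of each monomial) shows the extended subalgebra is associative, and your chain
\[
\Nm(xy)=(xy)(\bar y\,\bar x)=x(y\bar y)\bar x=x\,\Nm(y)\,\bar x=\Nm(y)(x\bar x)=\Nm(x)\Nm(y)
\]
is then valid. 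A small stylistic remark on item (1): rather than appealing to surjectivity of conjugation, one can also note directly that $\bar 1=\overline{1\cdot 1}=\bar 1\cdot\bar 1$ together with $\bar 1=\bar 1\cdot 1$ and the analogous left-sided identity shows $\bar 1$ is a two-sided unit, hence equal to $1$; but your bijectivity argument is equally correct.
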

\begin{proof}
See \cite{BWH}, Lemma~2.2.
\end{proof}

{\em Other notions.}

If $\Tr(x)\in\R\ \forall x\in A$,
then
$\frac12\Tr(x)$ is often called {\em real part} of $x$, sometimes
denoted as $x_0$.

$N(x)$ is also called the {\em symmetrization} of $x$ and denoted as
$x^s$.

\subsection{(Axially) symmetric domains}\label{sect-dom}
\begin{lemma}\label{lem-1-2}
  Let $\Omega$ be an open subset of
  the algebra $\Oct$ of octonions.
  Let $\S_{\Oct}=\{q\in\Oct:q^2=-1\}$.
  
  Then the following are equivalent:
  \begin{enumerate}
  \item
    There exists an open subset $D\subset \C$ such that
    \[
    \forall x,y\in\R:\forall J\in\S_{\Oct}:\ 
    x+yi\in D\quad \iff\quad x+yJ\in\Omega
    \]
  \item
    \[
    \forall J,K\in\S_{\Oct}:\forall x,y\in\R:
    x+yJ\in\Omega\ \iff\ x+yK\in\Omega
    \]
  \item
    There is a subset $M\subset\R\times\R^+_0$ such that
    \[
    \Omega=\{q\in\Oct:(\Tr(q),\Nm(q))\in M\}
    \]
  \item
    $\Omega$ is invariant under the action of $\Aut(\Oct)$.
  \item
    $\Omega$ is invariant under the action of $O(W)$, the group
    of orthogonal transformations of $W=\{q\in\Oct:\Tr(q)=0\}$
    acting naturally on $W$ and acting trivially on $\R$.
  \end{enumerate}
\end{lemma}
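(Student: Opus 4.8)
The plan is to prove the cyclic chain of implications
\[
\mathrm{(i)}\ \Longleftrightarrow\ \mathrm{(ii)}\ \Longrightarrow\ \mathrm{(iii)}\ \Longrightarrow\ \mathrm{(v)}\ \Longrightarrow\ \mathrm{(iv)}\ \Longrightarrow\ \mathrm{(ii)},
\]
which yields the equivalence of all five conditions. The recurring tool is the normal form of an octonion: every $q\in\Oct$ can be written as $q=x+yJ$ with $x=\tfrac12\Tr(q)\in\R$, $y=|\Im(q)|\ge 0$ and $J\in\S_{\Oct}$, where $J$ is arbitrary in the degenerate case $y=0$ (i.e.\ $q\in\R$). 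In this form $\Tr(q)=2x$ and $\Nm(q)=x^{2}+y^{2}$, so the data $(\Tr(q),\Nm(q))\in\R\times\R^+_0$ and the data $(x,y)$ determine one another.

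The equivalence (i) $\Leftrightarrow$ (ii) is essentially formal. For (i) $\Rightarrow$ (ii) one simply reads off $x+yJ\in\Omega\iff x+yi\in D\iff x+yK\in\Omega$. For the converse I would fix one $J_{0}\in\S_{\Oct}$, consider the continuous $\R$-linear slice map $\sigma\colon\C\to\Oct$, $\sigma(x+yi)=x+yJ_{0}$, and put $D=\sigma^{-1}(\Omega)$; this $D$ is open since $\Omega$ is open, and applying (ii) to the pair $J_{0},J$ gives $x+yi\in D\iff x+yJ\in\Omega$ for every $J\in\S_{\Oct}$ (the case $y=0$ being trivial).

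For (ii) $\Rightarrow$ (iii) I would set $M=\{(\Tr(q),\Nm(q)):q\in\Omega\}\subseteq\R\times\R^+_0$. The inclusion $\Omega\subseteq\{q:(\Tr(q),\Nm(q))\in M\}$ is immediate, and for the reverse one picks, for a given $q$ with $(\Tr(q),\Nm(q))\in M$, an element $q'\in\Omega$ sharing its trace and norm, hence its normal-form data $(x,y)$, and deduces $q\in\Omega$ from (ii). For (iii) $\Rightarrow$ (v): the $O(W)$-action fixes $\R$ and acts orthogonally on $W$, so it preserves both $\Tr$ and $\Nm$, whence the level set in (iii) is $O(W)$-invariant. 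For (v) $\Rightarrow$ (iv): every $\phi\in\Aut(\Oct)$ fixes $\R$ pointwise, maps $W$ to itself (it commutes with conjugation, so it preserves $\Tr$), and preserves $\Nm$; hence its restriction to $W$ lies in $O(W)$ and $\phi$ acts on $\Oct=\R\oplus W$ exactly as that restriction, so $O(W)$-invariance forces $\Aut(\Oct)$-invariance. Finally, for (iv) $\Rightarrow$ (ii): given $J,K\in\S_{\Oct}$, both have trace $0$ and norm $1$, so by the norm--trace/automorphism equivalence recorded in the Introduction there is $\phi\in\Aut(\Oct)$ with $\phi(J)=K$; since $\phi$ fixes $\R$ we get $\phi(x+yJ)=x+yK$, and (iv) then yields $x+yJ\in\Omega\iff x+yK\in\Omega$.

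The only step with genuinely non-elementary content is the last one, (iv) $\Rightarrow$ (ii): it rests on the fact that any two imaginary units of $\Oct$ are interchanged by an automorphism, equivalently that $\Aut(\Oct)=G_{2}$ acts transitively on $\S_{\Oct}$, the unit $6$-sphere of $W$. I would simply cite the norm--trace/automorphism equivalence from the Introduction; if a self-contained argument is wanted, one extends the given imaginary unit to a standard Cayley--Dickson basis of $\Oct$ and notes that any two such bases differ by an automorphism. Everything else reduces to linear algebra, elementary point-set topology, and the facts—already recorded in the paper—that $\Tr$ and $\Nm$ take values in $\R$ and are automorphism-invariant. The one recurring nuisance is the real locus $y=0$, where $J$ in the normal form is not determined by $q$; each step above is trivial there, but this case should be checked.
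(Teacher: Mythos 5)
Your proof is correct and rests on the same ingredients as the paper's: the normal form $q=x+yJ$ relating $(x,y)$ to $(\Tr(q),\Nm(q))$, the inclusion $\Aut(\Oct)\subset O(W)$ (Proposition~\ref{auto-ortho}), and the transitivity of $\Aut(\Oct)$ on $\S_\Oct$ via Corollary~\ref{n-tr-auto}. The only difference is organizational---you run a single cyclic chain $(i)\Leftrightarrow(ii)\Rightarrow(iii)\Rightarrow(v)\Rightarrow(iv)\Rightarrow(ii)$ where the paper proves bidirectional equivalences between consecutive items---so this is essentially the same argument.
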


\begin{proof}
  $(i)\iff(ii)$ is obvious.

  $(ii)\iff(iii)$:
  Let $q=x+yJ\in\Oct$ with $x,y\in\R$, $J\in\SOct$.
  Then $2x=\Tr(q)$ and $x^2+y^2=\Nm(q)$. Hence for any given
  $(t,n)\in\R\times\R^+_0$ we have
  \begin{align*}
  &\{q=x+yJ\in\Oct: \Nm(q)=n, \Tr(q)=t\}\\
  =&\left\{\frac t2+yJ: J\in\SOct,
  t\in\R, y\in\R^+_0,
  y^2=n-t^2/4\right\}.\\
  \end{align*}
  This yields $(ii)\iff(iii)$.

  $(iii)\iff(iv)$: See Corollary~\ref{n-tr-auto}.

  $(iv)\iff(v)$:
    Follows from Corollary~\ref{n-tr-auto} in combination with
    Proposition~\ref{auto-ortho}.
    
\end{proof}

  \begin{definition}\label{def-sym}
    \begin{enumerate}
    \item
      A domain $D$ in $\C$ is called {\em symmetric} if
      \[
      z\in D\iff \bar z\in D.
      \]
    \item
      A domain $\Omega$ in $\Oct$ is called {\em axially
        symmetric} if it satisfies one (hence all) of the
      properties of Lemma~\ref{lem-1-2}.
    \end{enumerate}
  \end{definition}

  In the situation of Lemma~\ref{lem-1-2} $(i)$ we write
  $\Omega_D=\Omega$, since $D$ and $\Omega$ are in one-to-one-correspondence.
  
\subsection{Slice and Stem functions}

\begin{definition}
  Let $D$ be a symmetric
  domain in $\C$ and let $\Omega=\Omega_D$
  be the associated {\em axially symmetric domain}, i.e.,
  \[
  \Omega=\Omega_D=\{x+yJ:x,y\in\R,J\in\SOct, x+yi\in D\}
  \]
  \begin{enumerate}
    \item
    A function $F:D\to\Octc$ is a {``\em stem function''} if
    \[
    \forall z\in D:\ \overline{F(\bar z)}=F(z)
    \]
    where we conjugate the complex
    part of the tensor product $\Octc=\Oct\tensor_\R\C$.
  \item
    A function $f:\Omega_D\to\Oct$ is a slice function,
    if there exists a stem function $F=F_1+\iota F_2$ ($F_i:D\to\Oct$) such that
    \[
    \forall x+yi\in D,J\in\SOct:
    f(x+yJ)=F_1(x+yi)+J F_2(x+yi)
    \]
  \item
    A function $f:\Omega_D\to\Oct$ is a slice {\em regular} function,
    if there exists such a corresponding stem function $F$ which is
    {\em holomorphic}.
  \end{enumerate}
\end{definition}

\subsubsection{$*$-product}
The space of slice regular functions on a axially
symmetric domain in $\Oct$
forms an alternative $\R$-algebra with the
$*$-product as multiplication.

This $*$-product may be defined by the correspondence with stem functions:

Given slice regular functions $f,g$ with stem functions $F$ resp.~$G$,
their ``star product'' is defined as the slice regular function
whose stem function is $F\cdot G$ (with $(F\cdot G)(z)=F(z)G(z)$).

If slice regular functions $f,g$ are described by convergent
power series
\[
f(q)=\sum_{k=0}^{+\infty} q^ka_k,\quad
g(q)=\sum_{k=0}^{+\infty} q^kb_k
\]
then 
\[
(f*g)=\sum_{k=0}^{+\infty} q^kc_k,\quad
c_k=\sum_{j=0}^ka_jb_{k-j}\quad
\text{(Cauchy product)}
\]
 {\em Warning:}
  In general $(f*g)(q)\ne f(q)g(q)$.
 
\subsection{Conjugation, norm and trace: Function algebras}\label{ss-3.4}

Given the notion of conjugation on an algebra we want to define
conjugation also on associated function algebras.
This is often intricate, since conjugation must be defined such
that the conjugate of a function is still a member of the function
algebra at hand.

For a {\em stem function} $F$ we define $(F^c)(z)=\left(F(z)\right)^c$,
i.e., we apply (octonion) conjugation pointwise.
Thus we obtain a {\em conjugation} on the algebra of stem functions
defined on a symmetric domain
$D\subset \C$.

For a {\em slice regular function} $f$ we may define its conjugate
$f^c$ using the
correspondence between slice functions and stem functions, i.e.,
given a slice regular function $f:\Omega_D\to\Oct$ with
stem function $F:D\to\Octc$ we define its conjugate $f^c$
as the slice regular function which has $F^c$ as stem function.

\begin{warning}
Given a slice regular function $f:q\mapsto f(q)$
in general neither $q\mapsto\overline{f(q)}$ nor
$q\mapsto \overline{ f(\bar q)}$ is regular.

In general we
have $f^c(q)\ne\overline{f(q)}$.
\end{warning}

From these definitions we easily deduce

\begin{lemma}
  \begin{enumerate}
  \item
    
    The map
    $f\mapsto f^c$ is an antiinvolution
    on the $\R$-algebra of slice regular functions on $\Omega_D$
    for every axially symmetric domain $\Omega_D$.
\item
  If $F=F_1+\iota F_2$ is the stem function for a slice regular function
  $f$, then
\begin{align*}\label{stem-conj}
  f(x+Iy)&=F_1(x+iy)+IF_2(x+iy)\\
  f^c(x+Iy)&=F_1^c(x+iy)+IF_2^c(x+iy)
\end{align*}
  \end{enumerate}
\end{lemma}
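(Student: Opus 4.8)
The plan is to transport the whole statement to the level of stem functions, where $f\mapsto f^{c}$ becomes simply the pointwise application of octonion conjugation, and then to invoke the fact (recalled above, cf.\ \cite{GP}) that the correspondence $f\leftrightarrow F$ is an isomorphism of $\R$-algebras between slice regular functions on $\Omega_D$ equipped with the $*$-product and holomorphic stem functions on $D$ equipped with the pointwise product $(F\cdot G)(z)=F(z)G(z)$.

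First I would verify that $F\mapsto F^{c}$, defined by $F^{c}(z)=(F(z))^{c}$, is a well-defined antiinvolution on the algebra of holomorphic stem functions on $D$. This requires three observations. (a) \emph{$F^{c}$ is again a stem function}: octonion conjugation on $\Octc$ is the $\C$-linear extension of conjugation on $\Oct$, hence it commutes with the complex conjugation appearing in the stem condition, so $\overline{F^{c}(\bar z)}=\overline{(F(\bar z))^{c}}=(\overline{F(\bar z)})^{c}=(F(z))^{c}=F^{c}(z)$. (b) \emph{$F^{c}$ is holomorphic}: $F^{c}=c\circ F$ where $c:\Octc\to\Octc$ is $\C$-linear, and the composition of a holomorphic map with a $\C$-linear map is holomorphic. (c) \emph{$F\mapsto F^{c}$ is an $\R$-linear antiinvolution for the pointwise product}: $\R$-linearity and $(F^{c})^{c}=F$ are inherited directly from the corresponding properties of octonion conjugation, while antimultiplicativity is checked pointwise, $(F\cdot G)^{c}(z)=(F(z)G(z))^{c}=(G(z))^{c}(F(z))^{c}=(G^{c}\cdot F^{c})(z)$, using that the product on $\Octc$ is the $\C$-bilinear extension of the octonion product and that conjugation is antimultiplicative on $\Oct$. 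Part (1) then follows immediately: since $f\leftrightarrow F$ is an $\R$-algebra isomorphism carrying the $*$-product to the pointwise product, and since $f^{c}$ is by definition the slice regular function with stem function $F^{c}$ (which is a stem function and holomorphic by (a) and (b), so $f^{c}$ is indeed slice regular), the map $f\mapsto f^{c}$ inherits $\R$-linearity, the involution property $(f^{c})^{c}=f$, and $(f*g)^{c}=g^{c}*f^{c}$ from the stem-function side.

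For part (2) I would unwind the definitions. Writing $F=F_{1}+\iota F_{2}$ with $F_{i}:D\to\Oct$, the first displayed identity is precisely the defining property of the slice function associated with $F$. For the second, compute the components of $F^{c}$: since octonion conjugation is $\C$-linear on $\Octc$, $F^{c}(z)=(F_{1}(z)+\iota F_{2}(z))^{c}=F_{1}^{c}(z)+\iota F_{2}^{c}(z)$, so the stem function $F^{c}$ has real and imaginary parts $F_{1}^{c}$ and $F_{2}^{c}$; applying the slice-function formula to $F^{c}$ yields $f^{c}(x+Iy)=F_{1}^{c}(x+iy)+IF_{2}^{c}(x+iy)$.

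There is no serious obstacle here; the only point that needs care throughout is keeping the two conjugations apart: the complex conjugation on the $\C$-factor of $\Octc=\Oct\tensor_\R\C$ that enters the stem condition, and the octonion conjugation $x\mapsto x^{c}$ extended $\C$-linearly to $\Octc$. The entire argument rests on two features of the latter — that it commutes with the former (which is what makes $F^{c}$ a stem function) and that it is $\C$-linear and antimultiplicative (which gives holomorphy of $F^{c}$ and the antiinvolution property).
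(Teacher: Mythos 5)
Your proposal is correct, and it follows precisely the route the paper has in mind: the paper introduces this lemma with ``From these definitions we easily deduce'' and gives no proof, leaving exactly the verification you carry out (that pointwise octonion conjugation is a holomorphic stem-function-valued antiinvolution for the pointwise product, transported through the $\R$-algebra isomorphism $f\leftrightarrow F$, plus the $\iota$-decomposition for part (2)). Nothing is missing; in particular you correctly isolate the two facts doing the work — that octonion conjugation on $\Octc$ is $\C$-linear and commutes with the complex conjugation in the stem condition.
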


Once conjugation $f\mapsto f^c$ is defined, 
we define norm and trace in the usual way
(as in Definition~\ref{def-n-tr}).

For a stem function $F$ we obtain:
\[
\Nm(F)(z)=(FF^c)(z)=(F(z))(F^c(z))=(F(z))(F(z))^c=\Nm(F(z))
\]
and
\[
(\Tr  F)(z)=(F+F^c)(z)=F(z)+(F(z))^c=\Tr (F(z))
\]

\subsection{Power series}

Let $\Omega=B_r=\{q\in\Oct:||q||<r\}$ with $0<r\le +\infty$.
Then every slice regular function $f$ on $\Omega$ is given by
a power series $f(q)=\sum_{k=0}^{+\infty} q^ka_k$ ($a_k\in\Oct$)
which converges on all of $\Omega$.

In this case we have:
\[
f^c(q)=\sum_{k=0}^{+\infty} q^k\bar a_k,\quad\quad
(\Tr f)(q)=\sum_{k=0}^{+\infty} q^k\left(\Tr  a_k\right)
\]

\subsection{Conjugation, norm and trace: summary}

Immediate from the construction we obtain:

\begin{proposition}\label{n-tr-stem}
  Let $f$ be a slice function and $F$ its associated stem function.

  Then 
  $\Nm(F)$, $\Tr(F)$ and $F^ c$ are the stem functions associated to
  $\Nm(f)$, $\Tr(f)$ resp.~$f^c$.
\end{proposition}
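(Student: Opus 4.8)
The plan is to regard Proposition~\ref{n-tr-stem} as a purely formal consequence of the structural facts already assembled above, so that no genuine computation is involved; one simply chains definitions. Let $\Phi$ denote the correspondence ``stem function $\mapsto$ slice function'', i.e.\ $\Phi(F)=f$ when $f(x+Iy)=F_1(x+iy)+IF_2(x+iy)$ for $F=F_1+\iota F_2$. I would first record the three properties of $\Phi$ that do all the work: (1) $\Phi$ is $\R$-linear, which is immediate from that displayed formula since it is $\R$-linear in $(F_1,F_2)$; (2) $\Phi$ carries the pointwise product $F\cdot G$ of stem functions to the $*$-product $f*g$ of slice functions, which is precisely how the $*$-product was defined; and (3) $\Phi(F^c)=f^c$, which is how the conjugate of a slice (regular) function was \emph{defined} in Subsection~\ref{ss-3.4}. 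Thus $\Phi$ is an isomorphism of $*$-algebras compatible with conjugation, and since $\Nm$ and $\Tr$ (Definition~\ref{def-n-tr}) are built only out of $+$, the algebra multiplication, and conjugation, they are preserved by $\Phi$.

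Spelled out, the three assertions of the proposition are handled as follows. For $F^c$: by (3) there is nothing to prove, since $f^c$ \emph{is} the slice function with stem function $F^c$. For the trace: Definition~\ref{def-n-tr}, applied to the $\R$-algebra of slice (regular) functions on $\Omega_D$, gives $\Tr(f)=f+f^c$; by (1) and (3) its stem function is $F+F^c$, which is $\Tr(F)$ (and equals $z\mapsto\Tr(F(z))$ by the computation in Subsection~\ref{ss-3.4}). For the norm: Definition~\ref{def-n-tr} gives $\Nm(f)=f\cdot f^c$, where the product is the one of the slice-function algebra, i.e.\ the $*$-product $f*f^c$; by (2) and (3) its stem function is the pointwise product $F\cdot F^c$, i.e.\ $z\mapsto F(z)F(z)^c=\Nm(F(z))$, which is $\Nm(F)$.

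The only point that needs a word of care — and the nearest thing to an obstacle — is to make explicit which algebra structure Definition~\ref{def-n-tr} refers to when applied to the function algebra: it must be the $*$-product, not the naive pointwise product of slice functions (the latter generally not even being a slice function), and with that choice $\Phi$ is multiplicative. Both facts are already on record above, so this is bookkeeping rather than mathematics. As a sanity check in the special case $\Omega=B_r$, one can instead invoke the power-series formulas already displayed: $f(q)=\sum_k q^k a_k$ forces $f^c(q)=\sum_k q^k\bar a_k$ and $\Tr(f)(q)=\sum_k q^k\Tr(a_k)$, which is exactly the coefficientwise effect of conjugation and trace on the stem function; but the conceptual argument above needs no such restriction and covers every axially symmetric domain.
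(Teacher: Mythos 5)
Your proposal is correct and matches the paper's own treatment: the paper states Proposition~\ref{n-tr-stem} is ``Immediate from the construction,'' and the construction you invoke — $f^c$ defined via $F^c$, the $*$-product defined via pointwise product of stem functions, and $\Tr,\Nm$ built from addition, multiplication and conjugation — is exactly what makes it immediate. Your remark pinning down that ``$\Nm(f)=f\,f^c$'' uses the $*$-product rather than the pointwise product of slice functions is the right caveat to raise, and the power-series sanity check is a harmless bonus.
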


We will apply the notions of conjugation, norm and trace
not only to $\Oct$, but 
to all of the following $\R$-algebras:

\begin{enumerate}
\item
  The algebra $\Oct$ of octonions with the usual multiplication
  and conjugation with $C=\R$.
\item
  The complexified octonions $\Octc$  with $C=\C$  embedded into
  $\Octc=\Oct\tensor_{\R}\C$ as $\R\tensor_{\R}\C$.
  Conjugation on $\Oct$ is a $\R$-linear self map of $\Oct$ which naturally
  induces a $\C$-linear self map on the tensor product
  $\Octc=\Oct\tensor_\R\C$. We take this 
  octonionic conjugation as the antiinvolution.
  This octonionic conjugation is not to be confused with the complex
  conjugation of the complex vector space   $\Octc=\Oct\tensor_\R\C$. 
\item
  The algebra of slice regular functions on an axially symmetric
  domain $\Omega_D$ 
  with the star product as multiplication and
  $f \mapsto f^ c$ (see discussion in \paragraph\ref{ss-3.4})
  as involution  and the subalgebra of slice preserving
    functions (see \paragraph\ref{subsect-sp}) as $C$.
\item
  The algebra of ``stem functions'' $F:D\to\Octc$
  on a symmetric domain $D$ with pointwise
  multiplication as product and pointwise octonionic
  conjugation as conjugation.
  Here $C$ denotes the subalgebra of those functions whose values
    are contained in the center of $\Octc$.
\end{enumerate}

\subsection{Slice preserving functions}\label{subsect-sp}

There is a special class of slice (regular) functions which
is called ``slice preserving''.

\begin{proposition}\label{eq-s-p}
  Let $D\subset \C$ be a symmetric domain with associated
  axially symmetric domain $\Omega_D$.
  
  Let $f:\Omega_D\to \Oct$ be a slice regular function with
  stem function $F:D\to\Octc$.

  Then the following are equivalent:
  \begin{enumerate}
  \item
    $f=f^c$,
  \item
    $F=F^c$,
  \item
    $F(D)\subset  \R\tensor_\R\C\subset \Oct\tensor_\R\C=\Octc$.
  \item
    $f(D\cap \C_I)\subset\C_I$ for all $I\in\S_{\Oct}=\{q\in \Oct:q^2=-1\}$
    (with $\C_I=\R+I\R$).
  \end{enumerate}
\end{proposition}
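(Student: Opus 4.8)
The plan is to prove the equivalences in a cycle, roughly $(1)\Leftrightarrow(2)$, then $(2)\Leftrightarrow(3)$, then $(2)\Leftrightarrow(4)$, using the dictionary between slice functions and stem functions at every step. First I would note that $(1)\Leftrightarrow(2)$ is essentially immediate from Proposition~\ref{n-tr-stem} (or the preceding Lemma on stem functions and conjugation): the stem function of $f^c$ is $F^c$, and the correspondence $f\leftrightarrow F$ is a bijection, so $f=f^c$ holds exactly when $F=F^c$. No computation is needed beyond invoking that correspondence.

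Next I would treat $(2)\Leftrightarrow(3)$, which is the genuinely algebraic heart of the statement. Writing $F=F_1+\iota F_2$ with $F_i:D\to\Oct$, octonionic conjugation acts on $\Octc=\Oct\tensor_\R\C$ componentwise, so $F^c=F_1^c+\iota F_2^c$. Thus $F=F^c$ for all $z\in D$ is equivalent to $F_1(z)=F_1^c(z)$ and $F_2(z)=F_2^c(z)$ for all $z$, i.e.\ to $F_i(z)$ lying in $C=\{x\in\Oct:x=\bar x\}=\R$ (using that for octonions the set of self-conjugate elements is exactly $\R$) for each $i$ and each $z$. That is precisely the assertion $F(D)\subset\R\tensor_\R\C$. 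So $(2)\Leftrightarrow(3)$ reduces to the pointwise fact that an element of $\Oct$ is fixed by conjugation iff it is real, which is standard.

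Finally, for $(2)\Leftrightarrow(4)$ I would pass back through the slice-stem dictionary in the explicit form recorded in the earlier Lemma: $f(x+Iy)=F_1(x+iy)+IF_2(x+iy)$ and $f^c(x+Iy)=F_1^c(x+iy)+IF_2^c(x+iy)$. By the part (3) analysis, $F=F^c$ forces $F_1,F_2$ to be real-valued, hence $F_2(x+iy)\in\R$, and then $f(x+Iy)=F_1+IF_2\in\R+I\R=\C_I$, giving $(3)\Rightarrow(4)$ (and through $(3)\Leftrightarrow(2)$ also $(2)\Rightarrow(4)$). For the converse $(4)\Rightarrow(2)$: assuming $f(D\cap\C_I)\subset\C_I$ for every $I\in\S_\Oct$, I would fix a point $x+iy\in D$ with $y\ne0$ and vary $I$; from $F_1(x+iy)+IF_2(x+iy)\in\R+I\R$ for all $I\in\S_\Oct$ one deduces, by taking two anticommuting units $I,J$ and comparing components in a suitable real basis of $\Oct$, that $F_1(x+iy)$ and $F_2(x+iy)$ must both be real (the ``$I$-free'' part of $F_1+IF_2$ has to be real for every $I$, which pins down $F_1$; similarly $F_2$). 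The case $y=0$ follows by continuity, or directly since $F_2(x)=0$ there by the stem-function symmetry $\overline{F(\bar z)}=F(z)$. Hence $F=F^c$.

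I expect the main obstacle to be the direction $(4)\Rightarrow(2)$, i.e.\ upgrading the slicewise condition ``$f$ maps each $\C_I$ into itself'' to the global statement that the stem function is real-valued; this is where one must genuinely use that $\S_\Oct$ is large (in particular contains anticommuting pairs) and argue about octonion components, rather than just manipulating a single slice. Everything else is a routine unwinding of definitions together with the elementary fact $\{x\in\Oct:x=\bar x\}=\R$.
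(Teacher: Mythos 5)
Your proposal is correct and follows essentially the same route as the paper, which only sketches the proof (``well known'', plus the hints that $(i)\iff(ii)\iff(iii)$ follow from the construction of $(\ )^c$ and $(iv)\iff(iii)$ from the representation formula). Your $(1)\iff(2)$ and $(2)\iff(3)$ steps are exactly the paper's ``by construction of $(\ )^c$'' unwound. For $(4)\Rightarrow(3)$ the paper invokes the representation formula, which gives a slightly slicker version of your by-hand component analysis: from $f(x+Iy)=F_1+IF_2\in\C_I$ and $f(x-Iy)=F_1-IF_2\in\C_I$ one gets $F_1\in\C_I$ and $F_2\in\C_I$ directly (since $\C_I$ is a subfield), and then $\bigcap_{I\in\S_\Oct}\C_I=\R$ finishes; this avoids picking anticommuting pairs and decomposing into octonion components. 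Your argument is sound but does more work than necessary at that step; the representation-formula phrasing is what the paper intends.
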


\begin{definition}\label{slice-preserv}
If one (hence all) of these properties are fulfilled,
$f$ is called ``{\em slice preserving}''.
\end{definition}

\begin{proof}
  These equivalences are well known. $(iv)\iff(iii)$ follows from
  representation formula.
  $(i)\iff(ii)\iff(iii)$ by construction  of $(\ )^c$.
\end{proof}

A slice regular function $f$ which is given by a convergent
power series $f(q)=\sum_{k=0}^{+\infty}q^ka_k$
is slice preserving
if and only if all the coefficients $a_k$ are real numbers.

\subsection{Compatibility}

Recall that conjugation for slice regular functions is {\em not}
just pointwise conjugation of the function values.

Therefore in general
\[
(\Tr f)(q)\ne \Tr(f(q)), (\Nm f)(q)\ne \Nm(f(q)).
\]

Let $B$ be a $\R$-subalgebra of an $\R$ algebra $A$.
Let $A$ be equipped with an antiinvolution which stabilizes
$B$.

Then for $x\in B$ the notions $\Tr(x)$ and $\Nm(x)$ are the same
regardless whether we regard $x$ as an element of $B$ or as an
element of $A$.

As a consequence
\begin{itemize}
\item
  For $x\in\Oct$ the notions $\Nm(x)$, $\Tr(x)$ agree
  independent of whether
  we consider $x$ in $\Oct$ or in $\Octc$.
\item
  For an element  $x\in\Oct$ the notions $\Nm(x)$, $\Tr(x)$ agree
  whether we regard $x$ in $\Oct$ or as a constant slice regular function
  with value $x$.
\end{itemize}

\subsection{Decomposing $\Oct$}
Frequently we will use the vector space decomposition $\Oct=C\oplus W$
where $C=\R$ is the center of $\Oct$ (and also
$C=\{x\in\Oct:x=\bar x\}$), and $W$ denotes the imaginary subspace,
i.e.,
\[
W=\ker\Tr=\{x\in\Oct:x=-\bar x\}
\]

This
decomposition $\Oct=C\oplus W$
is a vector space decomposition (in fact the eigenspace decomposition
for the conjugation map), but not an algebra decomposition.

It induces a similar decomposition $\Octc=C_{\C}\oplus W_{\C}$ of the
complexification.

\section{Strategy}
Here we want to present a rough ``road map'' for the proof of our main theorem
(Theorem~\ref{mainth}).

As always in this paper,
$\Oct$ denotes the algebra of octonions.

Let $\Omega_D$ be an axially symmetric domain in $\Oct$ associated to
a symmetric domain $D\subset\C$ (as in \paragraph\ref{sect-dom}).

The most difficult part of our main theorem (Theorem~\ref{mainth})
is the implication $(iii)\implies(iv)$.
In order to prove this
we have to show the following statement for
certain%
\footnote{namely, ``Stem functions'' as discussed in the preceding
section}
 holomorphic maps $F,H:\dom\to\Octc$
  
 \begin{center}\fbox{\begin{minipage}{9truecm}{\em If
       for every $z\in\dom$ there exists an element
       $g\in\GC=\Aut(\Octc)$ such that
       $F(z)=g\left(H(z)\right)$, then there exists a holomorphic map
       $\phi:\dom\to\GC$ with $F(z)=\phi(z) \left(H(z)\right)\ \forall z\in\dom$.
}
   \end{minipage}}
 \end{center}

This amounts to finding a section for a certain projection map, namely
$\pi:V\to\dom$
with
\[
V=\{(z,g)\in\dom\times\GC: F(z)=g\left(H(z)\right) \}
\]
and $\pi(z,g)=z$.

It is easy to reduce to the case where $\Tr(F)=\Tr(H)=0$
(Lemma~\ref{traceless}).

We discuss the locus where the imaginary parts of
$F$ and $H$ assume zero as value.
For this purpose we need the notion of a ``central divisor''
(\paragraph~\ref{ss-cdiv}).
Using this notion we arrive at the case where
the imaginary parts of $F$ and $H$ have no
zeroes. Then all the fibers of $\pi:V\to\dom$ have the same dimension.

Based on the construction of $V$ and an analysis of the automorphism
group $\Aut(\Octc)$ of the algebra of
complexified octonions $\Octc$,
we deduce that there is a discrete subset $L\subset\dom$ such that
$\pi$ restricts to a holomorphically locally trivial fiber bundle
over $\dom_0=\dom\setminus L$ (Proposition~\ref{loc-triv}).

For a  point $p$ in the discrete set $L$ the map $\pi$ is not
a locally trivial bundle on any neighborhood of $p$ in $D$;
in fact here the $\pi$-fiber is not
isomorphic to the generic $\pi$-fiber.

We continue as follows: First we construct a topological section
over $\dom_0$ (Proposition~\ref{top-section}),
then we show that we can extend this section to
a section defined on all of $D$ by suitable modifications near
the special fibers (Proposition~\ref{global-con}).
Here it is important that the generic fibers
are simply-connected, and that by a result 
in an earlier paper (see Proposition~\ref{loc-eq})
we know that $\pi$ admits everywhere local sections
(even holomorphic ones).

Once we have obtained a continuous section for $\pi:V\to\dom$, we may
deduce the existence of a {\em holomorphic} section $\sigma:D\to V$
using
Oka theory (Proposition~\ref{oka}).
For this purpose we verify that $\pi:V\to\dom$
is an {\em elliptic map} in the sense of Oka theory.

We would like to emphasize that
  for the case $A=\Oct$ we need new methods and technologies which
  differ from those we used in \cite{BWH} for $A=\H$.
  In particular, for $A=\H$ the fibers of the above mentioned
  map $\pi:V\to \C$ are one-dimensional, and in \cite{BWH} we used
  special properties only true in low dimensions. Here, for $A=\Oct$
  we need more general theory, in particular Oka theory.

On the other hand, the arguments we used for the octonion case
need that the generic fibers of the above mentioned map $\pi$
are simply-connected,
while in the quaternionic case they are isomorphic to $\C^*$. Thus the proof
for the quaternionic case is {\em not} a corollary to our result
for the octonion case.

\section{Local equivalence}

\begin{proposition}\label{loc-eq}
Let $G$ be a connected complex Lie group acting holomorphically on
  a complex manifold $X$ such that all the orbits have the same
  dimension $d$.

  Let
  \[
  F,H:\Delta=\{z\in\C:|z|<1\}
  \to X
  \]
  be holomorphic maps such that for every
  $z\in\Delta$ there exists an element $g\in G$ (depending on $z$,
  not necessarily unique)
  with $F(z)=g\cdot H(z)$.

Then 
 there exists $0<r<1$ and a holomorphic map
  \[
  \phi:
  \Delta_r=\{z:|z|<r\}\to G
  \]
  such that
  \[
  F(z)=\phi(z)\left( H(z)\right)\ \forall z\in\Delta_r.
  \]
\end{proposition}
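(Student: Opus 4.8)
The plan is to work locally near a given point $z_0\in\Delta$ and reduce the statement to a statement about the orbit map. After translating we may assume $z_0=0$. Fix $g_0\in G$ with $F(0)=g_0\cdot H(0)$; replacing $H$ by $z\mapsto g_0\cdot H(z)$ (which is again holomorphic, since $G$ acts holomorphically) we may assume $F(0)=H(0)=:x_0$. Let $O=G\cdot x_0$ be the orbit through $x_0$, a locally closed complex submanifold of $X$ of dimension $d$. The hypothesis says $F(z)$ and $H(z)$ lie in the \emph{same} $G$-orbit for every $z$; since orbits all have the same dimension $d$, each $G$-orbit is locally closed, and for $z$ near $0$ the points $F(z),H(z)$ are close to $x_0\in O$, so they actually lie in $O$ itself (a point close to $x_0$ lying in some orbit of the same dimension as $O$ must lie in $O$, because $O$ is open in its closure and the orbit of a nearby point cannot drop or jump dimension). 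Hence after shrinking the disc both $F$ and $H$ map into the single orbit $O$.

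The second step is to trivialize the orbit map. The orbit $O\cong G/G_{x_0}$ carries the structure of a homogeneous space, and the projection $p:G\to O$, $g\mapsto g\cdot x_0$, is a holomorphic principal bundle with structure group the (complex, possibly non-reduced—but here we may take its identity component) isotropy subgroup $G_{x_0}$; in particular $p$ is a holomorphic submersion and admits local holomorphic sections. Choose a local holomorphic section $s:U\to G$ of $p$ on a neighborhood $U$ of $x_0$ in $O$. After shrinking $r$ so that $F(\Delta_r),H(\Delta_r)\subset U$, define
\[
\phi(z)=s\bigl(F(z)\bigr)\cdot s\bigl(H(z)\bigr)^{-1}.
\]
This is holomorphic as a composition of holomorphic maps and group operations, and
\[
\phi(z)\cdot H(z)=s(F(z))\cdot s(H(z))^{-1}\cdot H(z)
= s(F(z))\cdot \bigl(s(H(z))^{-1}\cdot (s(H(z))\cdot x_0)\bigr)
= s(F(z))\cdot x_0 = F(z),
\]
using that $s(H(z))\cdot x_0 = H(z)$ and $s(F(z))\cdot x_0=F(z)$. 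This is exactly the desired conclusion on $\Delta_r$.

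The main obstacle is the first step: justifying that $F(z)$ and $H(z)$ lie in the \emph{fixed} orbit $O$ for all $z$ near $0$, rather than in a nearby orbit of the same dimension. This is where the equidimensionality hypothesis is essential. One clean way to see it: by a theorem on group actions with orbits of constant dimension, the orbit space is nice enough locally that one can find, near $x_0$, a holomorphic ``slice'' $S$ transverse to $O$ with $S\cap O=\{x_0\}$ and such that $G\cdot S$ is an open neighborhood of $x_0$ fibered over (an open subset of) the local orbit space; constant orbit dimension forces the orbit of every point of $S$ to meet $S$ only in that point, so two points of $G\cdot S$ are in the same orbit iff they have the same $S$-component. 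Since $F(0)=H(0)=x_0$ has $S$-component $x_0$, continuity gives that $F(z)$ and $H(z)$ have the same $S$-component near $0$ only if that common component is $x_0$—wait, more carefully: the hypothesis says $F(z)$ and $H(z)$ have the \emph{same} $S$-component $\sigma(z)$, and $\sigma$ is continuous with $\sigma(0)=x_0$; but we want $\sigma(z)=x_0$. This need not hold. So instead I would argue directly: the correct reduction is to replace $O$ by the (a priori moving) orbit and note that locally one still has a holomorphic principal-bundle-type structure over the image of the transverse slice, and the formula $\phi(z)=s(F(z))s(H(z))^{-1}$ with $s$ a local holomorphic section of $p:G\to G\cdot S$ over the saturation still works verbatim, because $s(F(z))\cdot x_0 \ne F(z)$ in general but $s$ as a section of $G\to \{g\cdot y: g\in G, y\in S\}$ satisfies $s(w)\cdot \pi_S(w)=w$; composing with the hypothesis $F(z)=g(z)H(z)$ which forces $\pi_S(F(z))=\pi_S(H(z))$, the same cancellation goes through. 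Thus the real content, and the step I expect to require the most care, is producing the holomorphic transverse slice and the holomorphic section over its $G$-saturation uniformly near $x_0$; once that local normal form for an equidimensional holomorphic action is in hand, the rest is the short computation above.
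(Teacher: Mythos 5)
Your proposal has a genuine gap, which you partly flag yourself at the end. The crucial step is the claim that $F(z)=g(z)H(z)$ forces the slice components to agree, $\pi_S(F(z))=\pi_S(H(z))$; this presupposes that near $x_0$ each $G$-orbit meets the transversal $S$ in at most one point, i.e.\ a local slice theorem for the action. No such theorem holds for general holomorphic actions with equidimensional orbits. For a counterexample take $G=\C$ acting on $X=(\C^*)^2$ by $t\cdot(z,w)=(e^t z,\,e^{\alpha t}w)$ with $\alpha$ real irrational: the action is free, so every orbit is one-dimensional, but the orbit through $(1,1)$ meets the transversal $S=\{(1,w):w\in\C^*\}$ in the set $\{(1,e^{2\pi i k\alpha}):k\in\Z\}$, which is dense in $\{1\}\times S^1$ and accumulates at $(1,1)$. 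Shrinking $S$ does not repair uniqueness, and in this example the orbits are not locally closed either, so the auxiliary assertion in your first paragraph (constant orbit dimension implies locally closed orbits) is also false. Holomorphic slice theorems of the kind you invoke require properness, compactness, or a reductive group acting on a Stein space, none of which is assumed here.

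The proposition is nevertheless true; the paper does not prove it here but cites \cite{BWH}, Proposition~6.4. The correct argument must lean on the holomorphy of $F$ and $H$ rather than only on the topology of the orbits. In the example above the ratio of the slice coordinates of $F(z)$ and $H(z)$ is a holomorphic function on a connected domain with values in the countable set $\{e^{2\pi i k\alpha}:k\in\Z\}$, hence constant; it is an identity-theorem type argument, not a slice-uniqueness property, that recovers $\pi_S(F)=\pi_S(H)$. The part of your proposal that does survive is the final computation: once the slice coordinates of $F$ and $H$ are known to agree, the formula $\phi(z)=s(F(z))\,s(H(z))^{-1}$, built from a local holomorphic section $s$ of the (constant-rank, hence locally split) orbit map, produces the required map. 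It is the justification that the slice coordinates agree which needs to be replaced by something exploiting analyticity.
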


\begin{proof}
  See \cite{BWH}, Proposition~9.4.
\end{proof}

\section{Topological preparations}    

\subsection{Existence of sections}

\begin{proposition}\label{top-section}
  Let $\pi:E\to B$ be a locally trivial topological
  fiber bundle and assume that
  the fiber $F$ is pathwise connected and that $B$ is homotopic
  to a (real) one-dimensional $CW$ complex.

  Then there exists a continuous section $\sigma:B\to E$.
\end{proposition}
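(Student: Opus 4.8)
The plan is to build the section cell by cell, using the hypothesis that $B$ is homotopy equivalent to a one-dimensional $CW$ complex together with the path-connectedness of the fiber $F$. First I would reduce to the case where $B$ itself is a one-dimensional $CW$ complex: if $B \simeq B'$ with $B'$ a one-dimensional complex, then pulling back the bundle along a homotopy equivalence $B' \to B$ and using the homotopy invariance of fiber bundles (over paracompact, or $CW$, bases) produces a bundle over $B'$ whose sections correspond to sections over $B$. So it suffices to construct a continuous section of a locally trivial bundle $\pi: E \to B$ with $B$ a one-dimensional $CW$ complex and fiber $F$ path-connected.

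Next I would proceed by induction on the skeleta, which here means only the $0$-skeleton and the $1$-skeleton. Over the $0$-skeleton $B^{(0)}$ (a discrete set of points) a section exists trivially: pick any point in each fiber. To extend over the $1$-cells, consider a $1$-cell attached via $\chi: (I, \partial I) \to (B, B^{(0)})$. Pulling the bundle back to $I$ gives a bundle over the interval, which is trivial (any bundle over a contractible, paracompact base is trivial), say $\chi^* E \cong I \times F$. The section already defined on $\partial I = \{0,1\}$ gives two points $f_0, f_1 \in F$; since $F$ is path-connected, choose a path $\gamma: I \to F$ from $f_0$ to $f_1$, and let the section on this $1$-cell be $t \mapsto (t, \gamma(t))$ in the local trivialization, which glues continuously with the section on the $0$-skeleton because it agrees at the endpoints. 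Doing this for every $1$-cell, and invoking the weak topology on the $CW$ complex (a map out of $B$ is continuous iff its restriction to each cell is), yields a global continuous section $\sigma: B \to E$.

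The one technical point that needs care — and the main obstacle, such as it is — is the homotopy-invariance step: I am implicitly using that isomorphism classes of fiber bundles with structure group (here the relevant homeomorphism group, or in our application a Lie group $G$) are invariant under homotopy of the base, which is standard for $CW$ or paracompact bases but does require the base to be reasonably behaved; since we are given $B$ homotopic to a $CW$ complex this is fine. In the application within the main theorem the base is $D_0 = D \setminus L$, an open Riemann surface, which is indeed homotopy equivalent to a one-dimensional $CW$ complex (a wedge of circles, possibly infinite), so the hypothesis is genuinely met. I should also note that no compactness of $B$ is needed: the inductive/cellwise construction works for infinite $CW$ complexes because continuity is checked cell by cell. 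I would then remark that this gives only a topological section; upgrading it to a holomorphic one is deferred to the Oka-theoretic step (Proposition~\ref{oka}).
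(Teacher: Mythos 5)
Your proof is correct and follows essentially the same strategy as the paper: reduce to $B$ being a one-dimensional $CW$ complex, trivialize the bundle over contractible pieces, and use path-connectedness of $F$ to extend over the $1$-dimensional parts. The only cosmetic difference is organizational — you build the section skeleton by skeleton (the standard obstruction-theoretic picture), while the paper instead removes a discrete set of interior points from the $1$-cells so that the complement is contractible, constructs a section there, and then bridges across each removed point with a path in $F$; both are the same underlying argument.
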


\begin{proof}
    First we claim: {\em
  There is no loss in generality in assuming that $B$ is itself
  a one-dimensional $CW$-complex (instead of merely being homotopic to one.)}

  Indeed, let $W$ be a real one-dimensional $CW$-complex homotopic to $B$.
  This means that there are continuous maps $f:W\to B$ and $g:B\to W$
  such that $g\circ f$ and $f\circ g$ are homotopic to
  $id_W$ resp.~$id_B$.
  Assume that the pull back bundle $f^*E\to W$ has a section.
  This yields an induced section in $g^*(f^*E))\to B$
  (\cite{Steenrod}, Lemma~2.11).
  But the bundle $g^*(f^*E))$
  is isomorphic as a bundle to $E$, because
  $g\circ f\sim id_B$ (\cite{Steenrod},
  Theorem~11.5.)

  Thus from now one we may and do assume that $B$ itself is a real
  one-dimensional $CW$-complex.
  
  We remove points $p_i$ ($i\in I$)
  in the one-dimensional cells of $B$ such that
  the complement $M=B\setminus\{p_i:i\in I\}$
  has only contractible connected components.
  For this, we have to remove points in one-dimensional cells which are
  part of closed loops. Evidently this may be done by removing at
  most one point in each one-dimensional cell.
  For this reason the (possibly countably infinite)
  family of points $(p_i)_i$ forms a discrete subset of $B$.
  \begin{center}
    \begin{tikzpicture}
      \draw [thin] (0,2) .. controls (-3,2.2) and (-3,-3) .. (0,2);
      \draw [thin] (0,2) -- (1,1) -- (3,0) -- (4.5,1) -- (6,2);
      \draw [thick,red] (1.4,0.8) -- node [above] {$U_2$} (2,0.5);
      \fill [blue] (1.6,0.7) node [below] {$p_2$} circle [radius=0.09];
      \draw [thick, red] (3.6,0.4) -- node [above left] {$U_3$} (4.2,0.8);
      \fill [blue] (3.9,0.6) node [below right] {$p_3$} circle [radius=0.09];
      \draw [thick, red] (-1.89,1.1432) node [left] {$U_1$} .. controls (-2.49,0.3252) and (-2.34,-0.6728) .. (-1.44,0.0992); %
      \fill [blue] (-2.16,-0.102) node [below] {$p_1$} circle [radius=0.09];
      \draw [thin] (3,0) arc [start angle=-146.3, end angle=33.7, radius=1.803];  
      \draw [thin] (1,1) .. controls (3,3) .. (4.5,1);
      \node at (0.8,-0.3) {$B$};
    \end{tikzpicture}
  \end{center}
  The restriction of the bundle to $M$ is trivial, because $M$
  is contractible. Hence there
  is a section $s_0:M\to E$.
  Each of the chosen points $p_i$ admits an open neighborhood
  $U_i$ with an homeomorphism  $\phi_i:U_i\,\simeq\,]-\!1,+1[$
      with $\phi_i(p_i)=0$.
      Fix $q_i^-=\phi_i^{-1}(-0.5)$ and
      $q_i^+=\phi_i^{-1}(0.5)$.
      
      The bundle admits a trivialization on $U_i$
      (because $U_i$ is contractible). Hence
      $\exists\alpha_i:\pi^{-1}(U_i)
      \stackrel{\sim}{\longrightarrow} U_i\times F$.
      Now we choose a path
      \[
      \gamma_i:[-0.5,+0.5]\to F
      \]
      with
      $ \alpha_i(s_0(q_i^-))=(q_i^-,\gamma_i(-0.5))$ and
      $ \alpha_i(s_0(q_i^+))=(q_i^+,\gamma_i(0.5))$.

      We obtain a section on $U_i$ as
      \[
      \sigma:x\mapsto
      \begin{cases} s_0(x) & \text{ if $x\not\in\phi_i^{-1}([-0.5,0.5])$}\\
        \alpha_i^{-1}(x,\gamma_i(t)) & \text{
          if $\phi_i(x)=t$ with $t\in[-0.5,0.5]$}\\
      \end{cases}
\]
      By performing this procedure around each $p_i$ and keeping $s_0$
      outside the union of all $U_i$ we obtain a global section
      $\sigma:B\to E$.
\end{proof}

\subsection{Homotopy equivalence of sections}
\begin{proposition}\label{app}
  Let $\pi:E\to B$ be a locally trivial
  topological
  fiber bundle with $B=S^1$
  where the fiber $F$ is connected and simply-connected.

  Then any two continuous sections are homotopic to each other.
\end{proposition}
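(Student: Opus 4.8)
I will prove the stronger statement that the space $\Gamma$ of continuous sections of $\pi$ (with the compact--open topology) is path--connected; a path in $\Gamma$ joining two sections is exactly a homotopy through sections, and this certainly implies they are homotopic as maps $S^1\to E$. The plan is: (1) present $E$ as a mapping torus, so that sections become \emph{twisted paths} in $F$; (2) recognize the resulting description of $\Gamma$ as the total space of a fibration over $F$ whose fibre is a path space of $F$; (3) read off $\pi_0(\Gamma)=\ast$ from the homotopy exact sequence, using precisely the hypotheses $\pi_0(F)=0$ and $\pi_1(F)=0$.

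\textbf{Steps.} First, writing $S^1=[0,1]/(0\sim 1)$ and using local triviality of $\pi$ over the two arcs covering $S^1$, one obtains a homeomorphism $E\cong M_\tau:=\bigl([0,1]\times F\bigr)/\bigl((1,x)\sim(0,\tau(x))\bigr)$ for a homeomorphism $\tau\colon F\to F$, under which $\pi$ becomes $[t,x]\mapsto[t]$. Unwinding the definition of a section of $M_\tau\to S^1$ (pull back to $[0,1]$ and read off the $F$--coordinate) shows that $\Gamma$ is homeomorphic to $\mathrm{ev}^{-1}(\Delta_\tau)\subset PF:=\mathrm{Map}([0,1],F)$, where $\mathrm{ev}(\gamma)=(\gamma(0),\gamma(1))$ and $\Delta_\tau=\{(\tau(y),y):y\in F\}$, which projects homeomorphically onto $F$. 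Second, $\mathrm{ev}\colon PF\to F\times F$ is a Hurewicz fibration, so its restriction $q\colon\Gamma\to\Delta_\tau\cong F$ over the subspace $\Delta_\tau$ is again a fibration; its fibre over $y$ is the space of paths in $F$ from $\tau(y)$ to $y$, which is non--empty because $F$ is path--connected, and whose set of path components is a torsor under $\pi_1(F)$, hence a single point since $\pi_1(F)=0$. Third, the tail of the homotopy exact sequence of $q$ reads $\pi_1(F)\longrightarrow\pi_0(\text{fibre})\longrightarrow\pi_0(\Gamma)\longrightarrow\pi_0(F)$, i.e.\ $\ast\to\ast\to\pi_0(\Gamma)\to\ast$, which (together with $\Gamma\neq\emptyset$) forces $\pi_0(\Gamma)=\ast$. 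This is the assertion.

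\textbf{Main obstacle.} The conceptual content is transparent once $\Gamma$ is identified with the twisted path space: connectedness of $F$ supplies non--emptiness of the fibre of $q$, and simple connectedness of $F$ collapses $\pi_0$ of that fibre. The one point that requires care is Step~1 --- verifying that a section of $\pi$ is really the same datum, as a point of a function space, as a map $\gamma\colon[0,1]\to F$ with $\gamma(0)=\tau(\gamma(1))$, and that the correspondence is a homeomorphism for the compact--open topologies --- and, relatedly, the fact that $\mathrm{ev}\colon PF\to F\times F$ is a Hurewicz fibration. Both are standard but must be checked. If one prefers to avoid fibration machinery, the same argument can be carried out by hand: given two sections, represented by twisted paths $\gamma_0,\gamma_1$, first use $\pi_0(F)=0$ to move $\gamma_1$ through twisted paths until $\gamma_1(0)=\gamma_0(0)$ (whence also $\gamma_1(1)=\tau(\gamma_1(0))=\tau(\gamma_0(0))=\gamma_0(1)$), and then, since $\pi_1(F)=0$, deform $\gamma_1$ to $\gamma_0$ rel endpoints --- such a deformation automatically preserves the relation $\gamma_s(0)=\tau(\gamma_s(1))$, hence is a homotopy of sections. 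In this hands--on version the delicate step is the first reduction (sliding a section so as to agree with another at a prescribed point while remaining a section), which is exactly the path--lifting property of the fibration $q$ in disguise.
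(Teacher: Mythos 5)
Your argument is correct, and it uses the same underlying geometric picture as the paper (present the bundle over $S^1$ as a mapping torus, identify sections with twisted paths in $F$, use path-connectedness to align endpoints and simple-connectedness to fill in rel endpoints), but it packages it differently. The paper's proof is essentially your ``hands-on'' alternative: it pulls the bundle and the two sections back to $[0,1]$, joins the two starting values by a path $\gamma$ (using $\pi_0(F)=0$), assembles a loop $K\colon\partial Q\to F$ on the boundary of the square $Q=[0,1]^2$ from $\gamma$, $\alpha\circ\gamma$, and the two pulled-back sections, then extends $K$ over $Q$ (using $\pi_1(F)=0$) and verifies explicitly that the result descends to a homotopy of sections over $S^1$. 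Your primary argument instead identifies the whole section space $\Gamma$ with $\mathrm{ev}^{-1}(\Delta_\tau)\subset PF$, invokes the standard fact that $\mathrm{ev}\colon PF\to F\times F$ is a Hurewicz fibration (and that restrictions of Hurewicz fibrations over subspaces of the base are still fibrations), and reads off $\pi_0(\Gamma)=\ast$ from the tail of the long exact sequence. What the fibration-theoretic route buys is a cleaner, basepoint-free statement ($\Gamma$ is path-connected) and it hides the descent check; what the paper's explicit route buys is elementarity --- it needs only that a nullhomotopic loop $S^1\to F$ extends over $D^2$, not the Hurewicz-fibration machinery for mapping spaces. One small slip in your hands-on sketch: with your convention $\gamma(0)=\tau(\gamma(1))$, once $\gamma_1(0)=\gamma_0(0)$ you get $\gamma_1(1)=\tau^{-1}(\gamma_1(0))=\tau^{-1}(\gamma_0(0))=\gamma_0(1)$, not $\gamma_1(1)=\tau(\gamma_1(0))$; the conclusion is unaffected since $\tau$ is a homeomorphism.
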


\begin{proof}
  Let $\sigma_0,\sigma_1:B\to E $ be two continuous sections.

  Recall that  $S^1\simeq\R/\Z$.
  In particular, $S^1$ may be obtained from
  $I=[0,1]$ by identifying $0$ with $1$.
  Let $\rho:I\to \left(I/\!\sim\right) \simeq S^1=B$ be the corresponding quotient map.

  Let 
  \[
  \rho^*E=\tilde E\stackrel{\tilde\pi}\longrightarrow I
  \]
  be the pull-back of the bundle $\pi:E\to B$ under the map
  $\rho:I\to B$.

  Since $I$ is contractible, this pull-back bundle
  may be trivialized as described in the commutative
  diagram below

  \begin{center}
    \begin{tikzpicture}
      \matrix (m)
              [
                matrix of math nodes,
                row sep    = 3em,
                column sep = 4em
              ]
              {
               E && \tilde E && I\times F\\
               &\curvearrowright & &\curvearrowright &\\
               S^1\simeq B=I/\!\sim & & & I &\\
              };
              \path
              (m-1-3) edge [->] node [below] {$\phi=(\tilde\pi,\psi)$}
                node [above] {$\sim$} (m-1-5)
              (m-1-3) edge [->] node [below left] {$\tilde \pi$} (m-3-4)
              (m-1-5) edge [->] node [below right] {$pr_1$} (m-3-4)
              (m-1-1) edge [->] node [left] {$\pi$} (m-3-1)
              (m-1-3) edge [->] node [above] {$\tilde\rho$} (m-1-1)
              (m-3-4) edge [-> ] node [below] {$\rho$} (m-3-1);
;
    \end{tikzpicture}
  \end{center}
  Here $\tilde\rho:\rho^*E=\tilde E\to E$ and
  $\tilde \pi:\tilde E\to I$ are the
  natural maps which make the diagram commute and $\psi$ denotes the
  second component of the trivializing map $\phi:\tilde E\to I\times F$.

  Since $B$ is obtained from $I=[0,1]$ by identifying $0$ with $1$,
  we may obtain $E$ from $\tilde E\simeq I\times F$  by identifying
  $(0,p)$ with $(1,\alpha(p))$ for $p\in F$ where $\alpha$ denotes
  an homeomorphism of $F$. Then $\tilde\rho:\tilde E\to E$ is the
  natural quotient map.
  
  Let $\tilde \sigma_i:I\to\tilde E$ denote the pull-backs of the
  sections $\sigma_i$. Note that
  $\psi\left(\tilde\sigma_i(1)\right)=
  \alpha\left(\psi\left( \tilde\sigma_i(0)\right) \right)$
  for $i\in\{0,1\}$.

  Because $F$ is pathwise connected, there is a continuous map
  $\gamma:[0,1]\to F$ with
  \begin{align*}
    &\gamma(0) = \psi(\tilde\sigma_0(0)) \\
    \text{and }\quad&\gamma(1) = \psi(\tilde\sigma_1(0)). \\
  \end{align*}

  Let $Q=[0,1]\times[0,1]$.

    We define a map $K:\partial Q\to F$ as
    indicated in the diagram below.
    \begin{center}
    \begin{tikzpicture}[scale=2]
      \draw (0,0) node [below left] {$\scriptstyle (0,0)$} -- node [above] {$\gamma$}
      (1,0) node [below right] {$\scriptstyle (1,0)$} -- node [right] {$\psi\circ\tilde\sigma_1$}
      (1,1) node [above right] {$\scriptstyle (1,1)$} -- node [above] {$\alpha\circ\gamma$}
      (0,1) node [above left] {$\scriptstyle (0,1)$} -- node [left] {$\psi\circ\tilde\sigma_0$} (0,0);
    \end{tikzpicture}
  \end{center}
    
  Precisely, we define (for all $s,t\in[0,1]$):
    \begin{align*}
    K(0,t)&=\psi(\tilde\sigma_0(t)),\\
    K(1,t)&=\psi(\tilde\sigma_1(t)),\\
    K(s,0)&=\gamma(s)\\
    K(s,1)&=\alpha\left(\gamma(s)\right)\\
  \end{align*}

  It is easily checked that $K:\partial Q\to F$
  is well-defined and continuous.

  Since $F$ is simply-connected, we may extend $K$ to
  a continuous map $H_0:Q\to F$.

  Then there is a homotopy
  $H:[0,1]\times B\to E$
  from $\sigma_0$ to $\sigma_1$ via
  \[
  H(s,\rho(t))=\tilde\rho\left( \phi^{-1} \left( t, H_0(s,t)
  \right)\right)
  \]

    First, let us check that $H$ is well-defined. Since $\rho(0)=\rho(1)$,
  we need
  \[
  \forall s\in[0,1]:
  \tilde\rho( \phi^{-1} ( 0, \underbrace{H_0(s,0)}_{=\gamma(s)}
  ))
  =
  \tilde\rho( \phi^{-1} ( 1, \underbrace{H_0(s,1)}_{=\alpha(\gamma(s))}
  ))
  \]
  which is true because the definition of $\tilde\rho$ and $\alpha$
  implies
  \[
  \tilde\rho \left(  \phi^{-1} (0,x)\right)
  =
  \tilde\rho \left(  \phi^{-1} (1,\alpha(x))\right)\quad\forall x\in[0,1]
  \]

  Second, we verify that $H$ is indeed a homotopy from $\sigma_0$
  to $\sigma_1$:
    \[
  H(0,\rho(t))=\tilde\rho\left( \phi^{-1} \right. ( t,
  \underbrace{H_0(0,t)}_{=K(0,t)}
  ) \left. \hbox to 0pt{\phantom{$\phi^{-1}$}}\right)
  =\tilde\rho(
  \underbrace{\phi^{-1} \left( t,\psi(\tilde\sigma_0(t))
  \right)}_{=\tilde\sigma_0(t)}
  )=\sigma_0(t)
  \]
  and
  \[
  H(1,\rho(t))=\tilde\rho\left( \phi^{-1}\right. ( t,
  \underbrace{H_0(1,t)}_{=K(1,t)}
  ) \left. \hbox to 0pt{\phantom{$\phi^{-1}$}}\right)
  =\tilde\rho(
  \underbrace{\phi^{-1} \left( t,\psi(\tilde\sigma_1(t))
  \right)}_{=\tilde\sigma_1(t)}
  )=\sigma_1(t).
  \]
  
\end{proof}

\subsection{Existence of global sections}

In Proposition~\ref{top-section}
  we proved the existence of global continuous sections for certain
  {\em locally trivial} fiber bundles.
  For our purposes everywhere locally trivial bundles are a too restricted
  class of surjections, we need the existence of global continuous sections
  under weaker conditions. Hence we deduce the proposition below.

\begin{proposition}\label{global-con}
  Let  $E$ connected real manifold,
  $X$ non-compact Riemann surface, $L\subset X$ a discrete subset
  and $\pi:E\to X$ be a
  surjective $C^1$-map such that
  \begin{enumerate}
  \item
    There are local continuous sections everywhere,
    i.e., for every $x\in X$ there
    is an open neighborhood $U$ and a continuous map $\sigma:U\to E$
    with $\pi\circ\sigma=id_U$.
   \item
     The restriction of $\pi$ to $X_0=X\setminus L$ is
     a locally trivial fiber bundle with a connected and
     simply-connected fiber $F$.
  \end{enumerate}

  Then there exists a global continuous section $\sigma:X\to E$.
\end{proposition}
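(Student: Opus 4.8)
The plan is to combine the two preceding propositions: first use Proposition~\ref{top-section} to build a continuous section over $X_0=X\setminus L$, and then repair it near the points of $L$, one at a time, with the help of Proposition~\ref{app}. Only the topological content of the hypotheses will be used; the facts that $\pi$ is $C^1$ and that $E$ is a manifold play no role in this statement.

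First I would invoke the classical fact that a non-compact (connected) Riemann surface is homotopy equivalent to a one-dimensional $CW$ complex — for instance, it deformation retracts onto an embedded graph. Removing the discrete set $L$ leaves $X_0$ again a non-compact Riemann surface, so the same applies to $X_0$. Moreover $\pi|_{X_0}$ is a locally trivial fibre bundle whose fibre $F$, being a connected topological manifold, is pathwise connected. Hence Proposition~\ref{top-section} applies and yields a continuous section $\sigma_0\colon X_0\to E$.

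Next, for each $p\in L$ I would use hypothesis~(1) to pick a continuous local section near $p$ and shrink its domain to a holomorphic coordinate disc $U_p\cong\{|z|<1\}$ centred at $p$; since $L$ is discrete and every Riemann surface is second countable, the $U_p$ can be chosen with pairwise disjoint closures, so $\{U_p\}$ is locally finite. Writing $\tau_p\colon U_p\to E$ for this local section, the goal is to produce a continuous section $\sigma_p\colon U_p\to E$ that still coincides with $\sigma_0$ on the outer annulus $\{3/4<|z|<1\}$. Granting this, one sets $\sigma=\sigma_p$ on $\{|z|<7/8\}\subset U_p$ for each $p$ and $\sigma=\sigma_0$ on the open set $X\setminus\bigcup_p\{|z|\le 3/4\}$; these pieces overlap only where they already agree, so the pasting lemma produces a global continuous section $\sigma\colon X\to E$ with $\pi\circ\sigma=\mathrm{id}_X$.

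The heart of the argument — and the step I expect to be the main obstacle — is the construction of $\sigma_p$. Consider the closed annulus $\bar A=\{1/2\le|z|\le 3/4\}$ inside $U_p\cap X_0$, identified with $S^1\times[0,1]$. Over $\bar A$ the map $\pi$ is a locally trivial bundle, and since $\bar A$ deformation retracts onto $S^1\times\{0\}$, the standard bundle homotopy theorem lets one identify $\pi^{-1}(\bar A)$, compatibly with the projection to $S^1\times[0,1]$, with a product $E_\xi\times[0,1]$, where $\xi\to S^1$ is the restriction of the bundle to $S^1\times\{0\}$. Under this identification a section of $\pi$ over $\bar A$ is precisely a homotopy through sections of $\xi$; and $\tau_p|_{\{|z|=1/2\}}$ and $\sigma_0|_{\{|z|=3/4\}}$ are two sections of $\xi$ that, by Proposition~\ref{app}, are homotopic through sections — this is exactly where connectedness and simple-connectedness of $F$ are needed. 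The resulting section of $\pi$ over $\bar A$ restricts to $\tau_p$ on $\{|z|=1/2\}$ and to $\sigma_0$ on $\{|z|=3/4\}$, so defining $\sigma_p$ to equal $\tau_p$ on $\{|z|\le 1/2\}$, this section on $\bar A$, and $\sigma_0$ on $\{3/4\le|z|<1\}$ yields a continuous section with the required matching. The only nonroutine inputs are thus Proposition~\ref{app} (hence the simple-connectedness of the fibre) and the fact that a locally trivial bundle over $S^1\times[0,1]$ is a product in the $[0,1]$-direction; everything else is standard point-set topology.
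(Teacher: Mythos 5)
Your argument is correct and follows essentially the same route as the paper: obtain $\sigma_0$ on $X\setminus L$ from Proposition~\ref{top-section}, then near each $p\in L$ use a local section, the product structure of the bundle over an annulus (homotopy invariance), and Proposition~\ref{app} to interpolate through sections, and finally paste. The only differences from the paper's write-up are cosmetic (choice of annulus radii, and your explicit invocation of the bundle-over-$B\times[0,1]$ theorem in place of the paper's fixed isomorphism of the pullbacks $\xi_r^*E$).
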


\begin{proof}
  
  $F$ is a connected manifold and therefore pathwise connected.
  $X$ is a non-compact Riemann surface and therefore homotopic to
  a one-dimensional $CW$-complex
  (see e.g.~\cite{Hamm83}).
  Hence Proposition~\ref{top-section} implies the existence
  of a continuous section  $\sigma_0$ on $X_0=X\setminus L$.
  
  For every $p\in L$ we choose an open neighborhood $U_p$ of $p$ in $X$
  such that
  \begin{enumerate}[label=(\arabic*)]
  \item
    all the $U_p$ are disjoint.
  \item
    There is a biholomorphic map
    $\zeta_p:U_p\to\Delta_3=\{z\in\C:|z|<3\}$ with
    $\zeta_p(p)=0$.
  \item
    There is a continuous section $s_p$ of $\pi$ on $U_p$, i.e.,
    a continuous map $s_p:U_p\to E$ with $\pi\circ s_p=id_{U_p}$.
  \end{enumerate}

  We fix $p\in D$, such a  map $\zeta_p$ and such a section $s_p$.

  By assumption $(ii)$, $\pi:E\to X$ restricts to a locally trivial
  bundle
  \[
  E\supset\pi^ {-1}\left(U_p\setminus\{p\}\right)
  \stackrel{\pi}\longrightarrow U_p\setminus p.
  \]

  Now for $r\in[1,2]$ we have maps $\xi_r:S^1\to U_p$
  defined as
  \[
  \xi_r(x)=\zeta_p^ {-1}(rx)\quad (x\in S^ 1=\{z\in\C:|z|=1\}).
  \]

  Since $\xi_r$ ($r\in[1,2]$) are all homotopic, the pull-back bundles
  $\xi_r^ *E$ are isomorphic.
  Fixing such an isomorphism, we may regard
  $s_p\circ\xi_1$ and $\sigma_0\circ\xi_2$
  as sections in the same fixed $F$-fiber bundle over $S^ 1$. Due to
  Proposition~\ref{app}
  it follows that there is a homotopy between
  $s_p\circ\xi_1$ and $\sigma_0\circ\xi_2$.
  
  Using the aforementioned isomorphism
  this homotopy yields a
  continuous map
  $H:[1,2]\times S^ 1\to E$ such that
  
  \begin{itemize}
  \item
    $\zeta_p(\pi \left(H(r,t)\right))=rt$.
  \item
    $H(1,t)=s_p\left( \zeta_p^{-1}(t)\right)$.
  \item
    $H(2,t)=\sigma_0\left( \zeta_p^{-1}(2t)\right)$.
  \end{itemize}

  Now we may define $\sigma$ on $U_p$ as
  \[
  \sigma(x)=\begin{cases}
  s_p(x) & \text{ if $|\zeta_p(x)|\le 1$}\\
  H(r,t) & \text{ if $r=|\zeta_p(x)|\in[1,2]$ with
    $\zeta_p(x)=rt$, $t\in S^1$}\\
    \sigma_0(x) & \text{ if $|\zeta_p(x)|\ge 2$}\\
  \end{cases}
  \]

  Since we may do this at every point $p\in D$ independently, we
  obtain a globally defined continuous section.
\end{proof}

\section{Oka Theory}

In complex analytic geometry there is the notion of an {\em Oka manifold}.
If a complex manifold $X$ is a Oka manifold, then for every Stein
manifold $Z$ and every continuous map $f_0:Z\to X$ there
exists a holomorphic map $f:Z\to X$ which is homotopic to $f_0$.

See \cite{F2} for more information about Oka manifolds.

\subsection{Elliptic Maps}

In Oka theory, there is the notion of {\em `` elliptic''} maps
(\cite{F2}, Definition~6.1.2) which we will use.

\begin{definition}\label{def-elliptic}%
\footnote{The definition as stated here is more restrictive
    than the original one in \cite{F2}, Definition~6.1.2. We do not
    need the most general form.}
  Let $f:X\to Y$ be a holomorphic map between complex
  manifolds.

  The map $f$ is called {``\em elliptic''} if there
  exists a 
    {``\em (dominating fiber) spray''}, i.e., if there exists
    
  a holomorphic vector bundle $\pi:E\to X$ and a
  holomorphic map $s:E\to X$ satisfying the following properties:
  
  \begin{center}
    \begin{tikzpicture}[scale=0.8]
      \draw [->] (0,1.7) -- node [left] {$s$} (0,0);
      \draw [->] (0.5,1.7) -- node [right] {$\pi$} (0.5,0);
      \node at (0.25,-0.5) {$X$};
      \node at (0.25,2.2) {$E$};
      \draw [->](0.7,-0.5) -- node [below] {$f$} (2.7,-0.5);
      \node at (3.2,-0.5) {$Y$};
    \end{tikzpicture}
  \end{center}
  
  \begin{itemize}
  \item
    For every $x\in X$ let $0_x$ denote the point in the zero-section of
    the vector bundle $E$ which is above $x$.
    Then $s(0_x)=x\ \forall x\in X$.
  \item
    $\forall p\in Y:\ s(E_p)\subset X_p$ for
    $X_p=f^ {-1}(p)$ and   $E_p=(f\circ\pi)^ {-1}(p)=\pi^ {-1}(X_p)$.
    In other words: $f\circ s=f\circ\pi$.
  \item
    For every $x\in X$,
    $V_x=\{v\in T_xX:(Df)_x(v)=0\}=T_x\left(X_{f(x)}\right)$,
    $W_x=\{w\in T_{0_x}E: (D\pi)_{0_x}(w)=0\}=T_{0_x}\left(\pi^ {-1}(x)\right)$
    the linear map
    $(Ds)_{0_x}:W_x\to V_x$
    is surjective.
  \end{itemize}
\end{definition}

\begin{remark}
  If $f$ is constant, the last condition is equivalent to $s$
  being submersive at every point of the zero-section.
\end{remark}

\begin{example}
  Let $f:X\to Y$ be an unramified covering. Then $f$ is an {\em elliptic
    map} (using as $E$ the trivial bundle with fiber $\{0\}$).
\end{example}

For us, the important fact on elliptic maps is the following:

\begin{theorem}\label{thm-elliptic}
  Let $f:X\to Y$ be an elliptic holomorphic map.
  Assume that $Y$ is a Stein manifold.

  Then every continuous section $\sigma$ (i.e.~every continuous map
  $\sigma:Y\to X$ with $f\circ\sigma=id_Y$)
  is homotopic to a holomorphic section.
\end{theorem}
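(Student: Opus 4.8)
The plan is to recognise this statement as a special case of Gromov's Oka principle for sections of elliptic submersions over Stein manifolds, and to reduce the proof to the corresponding result of \cite{F2}. A self-contained argument is not appropriate here: the content is Gromov's theorem itself, whose proof is a substantial development, and the point of the definitions above is precisely to package $f$ so that this theorem applies.

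The reduction proceeds in three steps. First, the data $\pi\colon E\to X$ and $s\colon E\to X$ of Definition~\ref{def-elliptic} exhibit $f\colon X\to Y$ as an \emph{elliptic submersion} in the sense of Gromov (\cite{F2}, Definition~6.1.2; the notion in Definition~\ref{def-elliptic} is a special case): $E$ is a holomorphic vector bundle over $X$, the identity $s(0_x)=x$ holds on the zero section, $s$ carries the fibre $E_p$ into $X_p$ (equivalently $f\circ s=f\circ\pi$), and the surjectivity of $(Ds)_{0_x}\colon W_x\to V_x$ for every $x$ is precisely the statement that $s$ is a \emph{dominating} fibre spray; in particular $f$ is a holomorphic submersion. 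Second, since $Y$ is Stein by hypothesis, Gromov's Oka principle applies to $f$: the inclusion of the space of holomorphic sections of $f$ into the space of continuous sections is a weak homotopy equivalence (\cite{F2}, Theorem~6.2.2; the basic, non-parametric form suffices, so the approximation and interpolation refinements of \cite{F2}, Chapter~8, are not needed). Third, a weak homotopy equivalence is in particular surjective on $\pi_0$, so every path component of the space of continuous sections contains a holomorphic section; equivalently, an arbitrary continuous section $\sigma$ of $f$ is homotopic, through continuous sections, to a holomorphic one. This is what was to be proved.

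I do not anticipate a genuine obstacle, since everything substantive is quoted from \cite{F2}. Were one to give a direct proof, the core difficulty would be Gromov's inductive construction along a strictly plurisubharmonic exhaustion of the Stein manifold $Y$: over convex bumps one approximates and then patches local holomorphic sections, using the fibre spray $s$ to absorb the discrepancy over the overlaps (a Cartan-type splitting lemma for sprays), while controlling the change of topology of the sublevel sets at the critical points of the exhaustion function. None of this needs to be reproduced for the purposes of the present paper, where the base $Y$ is in any case a domain in $\C$.
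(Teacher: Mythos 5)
Your proposal is correct and takes essentially the same route as the paper, which simply cites \cite{F2}, Theorem~6.2.2. You spell out why the cited theorem applies (the weak homotopy equivalence of section spaces is surjective on $\pi_0$), which is a helpful unpacking but not a different argument.
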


\begin{proof}
See
\cite{F2}, Theorem~6.2.2..
\end{proof}

\begin{example}
Let $G$ be a complex Lie group, $p:P\to B$ a $G$ principal
bundle. Then the projection map $p:P\to B$ is an {\em elliptic} map:

We set $f=p$, $X=P$, $Y=B$, $E=P\times \Lie(G)$
(where $\Lie (G)$ denotes the Lie algebra of $G$);
$\pi$ denotes the projection from $E$ to the first factor. In this way
$E$ is a trivial vector bundle with fiber $\Lie(G)$ over $B$.
Let $\mu:P\times G\to P$ be the principal right action
of the principal bundle $p:P\to B$. Then we may choose
$s$ as
\[
s:P\times\Lie(G)\ni (x,v)\mapsto \mu(x,\exp(v))
\]
In this way, the above Theorem~\ref{thm-elliptic} generalizes
the classical Grauert Oka theorem (\cite{G}).
\end{example}

\begin{proposition}\label{loc-triv-ell}
  Let $G$ be a complex Lie group acting transitively on
  a connected complex manifold $F$.
  
  Let $\pi:H\to B$ be a holomorphic locally trivial fiber bundle
  with fiber $F$ and structure group $G$.

  Then $\pi$ is an elliptic map.
\end{proposition}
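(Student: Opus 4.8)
The plan is to equip $\pi:H\to B$ with a dominating fiber spray in the sense of Definition~\ref{def-elliptic}, built from the infinitesimal $G$-action on $F$. Since $\pi$ has structure group $G$, there is a holomorphic principal $G$-bundle $P\to B$ with $H\cong P\times_G F$. I will set $\mathfrak g=\Lie(G)$, let $\mathrm{ad}(P)=P\times_G\mathfrak g\to B$ be the associated adjoint bundle (a holomorphic vector bundle), and take $E=\pi^{*}\mathrm{ad}(P)\to H$ as the spray bundle. Writing $h=[p,w]$ with $p\in P$, $w\in F$, a point of $E$ over $h$ is $[p,v]$ for a unique $v\in\mathfrak g$ once $p$ is fixed, and I will define
\[
s:E\to H,\qquad s\bigl([p,v]\bigr)=\bigl[\,p,\ \exp(v)\cdot w\,\bigr].
\]
Using $(pg,w)\sim(p,g\cdot w)$ and $(pg,v)\sim(p,\mathrm{Ad}(g)v)$ one checks $s$ is well defined, and it is holomorphic because $\exp:\mathfrak g\to G$, the action $G\times F\to F$, and the trivializations of $P$ are holomorphic. (In a local trivialization $\pi^{-1}(U_i)\cong U_i\times F$ with cocycle $g_{ij}:U_i\cap U_j\to G$, this is just $E|_{\pi^{-1}(U_i)}\cong\pi^{-1}(U_i)\times\mathfrak g$ and $s(b,w,v)=(b,\exp(v)\cdot w)$; these glue because the overlap maps of $E$ are $\mathrm{Ad}(g_{ij}\circ\pi)$.)

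Next I will check the three conditions of Definition~\ref{def-elliptic} with $X=H$, $Y=B$ and elliptic map $\pi$. On the zero section, $\exp(0)=e$ gives $s(0_h)=[p,w]=h$. Next, $s([p,v])=[p,\exp(v)w]$ lies in the $\pi$-fiber of $h=[p,w]$, so $\pi\circ s=\pi\circ\pi_E$ where $\pi_E:E\to H$ is the bundle projection; equivalently $s$ carries $(\pi\circ\pi_E)^{-1}(b)$ into $\pi^{-1}(b)$ for every $b\in B$. Finally, fix $h=[p,w]$ and identify $W_h=T_{0_h}(\pi_E^{-1}(h))$ with $E_h\cong\mathfrak g$ and $V_h=\ker(D\pi)_h$ with $T_w\bigl(\pi^{-1}(\pi(h))\bigr)\cong T_wF$. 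Then
\[
(Ds)_{0_h}(v)=\frac{d}{dt}\Big|_{t=0}s\bigl([p,tv]\bigr)=\frac{d}{dt}\Big|_{t=0}\exp(tv)\cdot w
\]
is the value at $w$ of the fundamental vector field of $v$. Since $G$ acts transitively on the connected manifold $F$, the orbit of $w$ is all of $F$, so $\mathfrak g\to T_wF$, $v\mapsto\frac{d}{dt}|_{0}\exp(tv)\cdot w$, is surjective; hence $(Ds)_{0_h}:W_h\to V_h$ is surjective, and $\pi$ is elliptic.

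The one point that is not a mechanical unwinding of definitions — and hence the main thing to get right — is the choice of the spray bundle: a trivial bundle $H\times\mathfrak g$ fails, because the natural local sprays $(b,w,v)\mapsto(b,\exp(v)w)$ are intertwined by the adjoint action of the transition cocycle, which forces $E=\pi^{*}\mathrm{ad}(P)$. Once that is in place, well-definedness and holomorphy of $s$ are routine, the zero-section and fiber-preservation conditions are immediate, and surjectivity of $(Ds)_{0_h}$ is precisely the infinitesimal transitivity of a transitive Lie group action.
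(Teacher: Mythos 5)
Your proof is correct and takes essentially the same approach as the paper: the paper also builds the spray bundle as the pullback to $H$ of the adjoint bundle (described there via trivializing cover and transition cocycles rather than in the $P\times_G F$ language) and defines the spray by $(p,v)\mapsto \exp(v)\cdot p$ in local charts. The only difference is cosmetic (principal-bundle formalism versus transition functions), and you additionally spell out the well-definedness and domination checks that the paper dismisses as routine.
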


\begin{proof}
  Let $\cU=\left(U_i\right)_ {i\in I}$ be a trivializing open cover
  on $B$.
  We may identify $H$ with the quotient of
  \[
  \{ (x,i,p):i\in I, x\in U_i,p\in F\}
  \]
  with respect to the equivalence relation
  \[
  (x,j,p)\sim (x,i,\phi_{ij}(x)(p))
  \]
  for some {``transition functions''} $\phi_{ij}:U_i\cap U_j\to G$.

  Then we consider the quotient $E$ of
  \[
  \{(x,i,p,v):i\in I, x\in U_i, p\in F, v\in\Lie(G)\}
  \]
  with respect to
  \[
  \left((x,i,p,v)\sim (x,i,\phi_{ij}(x)(p),Ad\left(\phi_{ij}(x)\right)(v)\right)
  \]

  Since the adjoint action of $G$ on its Lie algebra $\Lie(G)$ is linear,
  the natural projection $E\to H$ is a vector bundle.

  Finally we define a {\em spray}
  $s:E\to H$ by a representative:
  \[
  s:[(x,i,p,v)]\mapsto [(x,i,\exp(v)(p))]
  \]
  where $[(x,i,p,v)]$ resp.~$[(x,i,\exp(v)(p))]$ denotes the point in $E$
  resp.~$H$ represented by $(x,i,p,v)$ resp.~$(x,i,\exp(v)(p))$.
  
  It is easily verified that this is well-defined and indeed
  a dominating spray.
\end{proof}

  \begin{proposition}\label{oka}
    Let $\Gc$ be a complex Lie group acting holomorphically on a
    complex manifold $X$. Assume that all isotropy groups
    have the same dimension $k$. Let $Z$ be a Stein complex manifold.
    Let $C,F:Z\to X$ be holomorphic maps and let
    \begin{equation}\label{V-def}
    V=\{(g,t)\in \Gc\times Z: g \left( C(t)\right)=F(t)\}
    \end{equation}

    Let $\pi:V\to Z$ be the natural projection map:
    $\pi(g,t)=t$.
    Then $\pi$ admits a holomorphic section
    $\sigma:Z\to V$ if and only if it admits a
    continuous section.

        \begin{center}
    \begin{tikzcd}
  \Gc\times Z & V \arrow[l, phantom, sloped, "\supset"]
                  \arrow[d,"\pi"] & \\
                  & Z \arrow[r,bend left,"C"]\arrow[r,bend right,"F"']
                  \arrow [u,bend left,dashed,"\sigma"] & X \\
    \end{tikzcd}
    \end{center}
   \end{proposition}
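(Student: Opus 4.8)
The plan is to deduce this from the ellipticity machinery already assembled. First I would observe that $\pi:V\to Z$ is a holomorphic map: $V$ is the preimage of the diagonal-type condition $g(C(t))=F(t)$, which is a holomorphic equation in $\Gc\times Z\times X$, and the hypothesis that all isotropy groups have the same dimension $k$ forces all orbits to have the same dimension $\dim\Gc-k$, so $V$ is a (smooth) complex submanifold of $\Gc\times Z$ — this is precisely the setting of Proposition~\ref{loc-eq} applied locally, which guarantees $\pi$ is a submersion with nonempty fibers over every point (local holomorphic sections exist everywhere). Each nonempty fiber $\pi^{-1}(t)=\{g\in\Gc:g(C(t))=F(t)\}$ is a coset $g_0\cdot\Gc_{C(t)}$ of the isotropy group $\Gc_{C(t)}$, hence a $k$-dimensional complex submanifold of $\Gc$.

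The key step is to show that $\pi:V\to Z$ is an \emph{elliptic} map in the sense of Definition~\ref{def-elliptic}, so that Theorem~\ref{thm-elliptic} applies (using that $Z$ is Stein) and upgrades the given continuous section to a holomorphic one. To build a dominating fiber spray I would use the Lie-group structure along the fibers: the fibers of $\pi$ are cosets inside $\Gc$, and left translation gives a transitive action of $\Gc$ on itself preserving these cosets only after a twist, so instead I would proceed as follows. For a point $(g,t)\in V$, right multiplication $g\mapsto g\cdot\exp(v)$ for $v\in\Lie(\Gc_{C(t)})$ keeps us in the same fiber, since $g\exp(v)(C(t))=g(C(t))=F(t)$ when $\exp(v)$ fixes $C(t)$. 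The subspaces $\Lie(\Gc_{C(t)})\subset\Lie(\Gc)$ have constant dimension $k$ by hypothesis and vary holomorphically with $t$, so they assemble into a holomorphic vector bundle $E\to V$, $E_{(g,t)}=\Lie(\Gc_{C(t)})$, and the spray $s:E\to V$, $s(g,t,v)=(g\exp(v),t)$, is a holomorphic map with $s(0_{(g,t)})=(g,t)$, satisfies $\pi\circ s=\pi\circ(\text{projection }E\to V)$ because the $Z$-coordinate $t$ is untouched, and its vertical derivative at the zero section is the map $v\mapsto(Dr_g)_e(v)$ from $\Lie(\Gc_{C(t)})$ onto $T_g(g\cdot\Gc_{C(t)})=T_g(\pi^{-1}(t))$, which is an isomorphism. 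Thus $s$ is a dominating spray and $\pi$ is elliptic.

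With ellipticity in hand the proof concludes quickly: one direction is trivial (a holomorphic section is in particular continuous), and for the converse, $Z$ being a Stein manifold and $\pi:V\to Z$ being elliptic, Theorem~\ref{thm-elliptic} shows every continuous section of $\pi$ is homotopic to a holomorphic section; in particular, if a continuous section exists, a holomorphic one exists.

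The main obstacle I anticipate is the holomorphic dependence of $\Lie(\Gc_{C(t)})$ on $t$, i.e.~verifying that these isotropy Lie algebras genuinely form a holomorphic subbundle of the trivial bundle $V\times\Lie(\Gc)$. This is where the constant-dimension hypothesis on the isotropy groups is essential: without it the dimension could jump and the ``bundle'' would fail to be locally trivial. Under the constant-dimension assumption one can argue that $\mathfrak{g}_x=\ker\bigl(\Lie(\Gc)\to T_x X,\ v\mapsto \xi_v(x)\bigr)$ (where $\xi_v$ is the fundamental vector field of $v$) is the kernel of a holomorphic bundle map of constant rank, hence a holomorphic subbundle; pulling back along $(g,t)\mapsto C(t)$ gives the desired $E\to V$. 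Once this local-triviality point is settled, everything else is the formal verification of the three spray axioms, which is routine.
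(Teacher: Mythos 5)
Your proof is correct and follows essentially the same path as the paper: show that $\pi:V\to Z$ is an elliptic map by constructing a dominating fiber spray out of the isotropy Lie algebras (whose constant dimension $k$ makes them vary holomorphically as a vector bundle over $V$), then invoke Theorem~\ref{thm-elliptic} using that $Z$ is Stein. The only cosmetic differences are that you build the spray via right multiplication $g\mapsto g\exp(v)$ with $v\in\Lie(\Gc_{C(t)})$ whereas the paper uses left multiplication $g\mapsto\exp(u)g$ with $u\in\Lie(\Gc_{F(t)})$ --- interchangeable, since each fiber of $\pi$ is simultaneously a left coset of $\Gc_{C(t)}$ and a right coset of $\Gc_{F(t)}$ --- and you construct the bundle $E$ as the kernel of a constant-rank bundle map into $TX$ where the paper pulls back the tautological bundle over the Grassmannian $Gm_k(\Lie\Gc)$; also note that the differential of $v\mapsto g\exp(v)$ at $v=0$ is $(DL_g)_e$, not $(Dr_g)_e$.
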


  \begin{proof}
    First we observe that $\pi\circ\sigma=id_Z$ implies the surjectivity
    of $\pi$.
    Thus the statement is trivially true if $\pi$ is not surjective:
    Without surjectivity of $\pi$ neither continuous nor holomorphic
    sections may exist.
    
    From now on  we assume that $\pi:V\to Z$ is surjective.
    Then there exist local holomorphic sections due to
    Proposition~\ref{loc-eq}.
    Let $p\in Z$ and let $\sigma:W\to V$ be a local section
    in an open neighborhood $W$ of $p$ in $Z$.
    Then $\pi\circ\sigma=id_W$, implying
    $D(\pi\circ\sigma)_p=id$.
    It follows that $D\pi$ is surjective, i.e.,
    $\pi$ is submersive.
    
    Let $\Gamma=Gm_k(\Lie \Gc)$ be the Grassmann manifold
    para\-me\-tri\-zing
    $k$ dimensional vector subspaces of the Lie algebra of $\Gc$.

    Recall that all the isotropy groups have the same dimension $k$.
    Thus we have a map $\zeta$ from $Z$ to $\Gamma$ mapping
    a point $t\in Z$ to the
    point in the Grassmann manifold corresponding to the
    Lie algebra of the
    isotropy group at $F(t)$.
    
    We recall the notion of the {``\em tautological vector bundle''}
    $\rho:\Theta\to\Gamma$  which is defined as
    \[
    \Theta=\{ ([U],u)\in \Gamma\times \Lie \Gc: u\in U\},
    \quad \rho([U],u)=[U]
    \]
    
    Let $E=(\zeta\circ\pi)^*\Theta$ be the pull-back as a vector bundle, i.e.,
      
    \begin{align*}
      E&=\{(\vartheta,v)\in\Theta\times V:
      \rho(\vartheta)=\zeta(\pi(v))\}\\
      &
      \simeq
      \left\{([U],u;v)\in\Gamma\times\Lie\Gc\times V
      :\ u\in[U]=\zeta(\pi(v))   \right\}\\
      &
      =
      \left\{([U],u;v)\in\Gamma\times\Lie\Gc\times V
      :\ u\in[U],\ U=\Lie \left(\Gc\right)_{F(\pi(v))}
      \right\}
    \end{align*}
    The condition $U=\Lie \left(\Gc\right)_{F(\pi(v))}$ implies
    that $[U]$ is determined by $v$. Hence
    \[
    E\simeq  \left\{(u;v)\in\Lie\Gc\times V
      :\ u\in\Lie \left(\Gc\right)_{F(\pi(v))}
      \right\}
    \]

    We recall the definition of $V$ as a subset of $\Gc\times Z$
    as in \eqref{V-def}
     and observe that
      \begin{align*}
      &u\in\Lie \left(\Gc\right)_{F(\pi(v))}\\
        \iff &\exp(ru) \in \left(\Gc\right)_{F(\pi(v))}\ \forall r\in\C\\
        \iff & \exp(ru)(F(\pi(v)))=F(\pi(v))\ \forall r \in\C\\
      \end{align*}
      Therefore:
      \begin{equation}\label{E-tripel}
        \begin{minipage}{0.5\textwidth}
          \[
                    E\simeq \{ (u,g,t)\in \left(\Lie\Gc\right)\times \Gc\times Z
              : (g,t)\in V ,
                    \]\vskip -20pt
                    \[
                    \exp(ru)(F(t))=F(t)\ \forall
    r\in\C \}.
    \]
                \end{minipage}
    \end{equation}
    Let 
    $\tau:E\to V$ be the natural projection onto $V$.
    Now $E\to V$ is a vector bundle such that the fiber over a point
    $(g,t)\in V$ is naturally isomorphic to the Lie algebra of
    the isotropy group for the $\Gc$-action on $X$ at $F(t)$, i.e.,
    \begin{center}
    \begin{tikzpicture}
     \matrix (m)
    [
      matrix of math nodes,
      row sep    = 3em,
      column sep = 4em
    ]
    {
      E && \Theta \\
      V & Z & \Gamma \\
    };
    \path
    (m-1-1) edge [->] (m-1-3)
    edge [->] node [left] {$\tau$} (m-2-1)
    (m-1-3) edge [->] node [right] {$\rho$} (m-2-3)
    (m-2-1) edge [->] node [below] {$\pi$} (m-2-2)
    (m-2-2) edge [->] node [below] {$\zeta$} (m-2-3)
    ;
    \end{tikzpicture}
\end{center}
    
    We define a spray $s$ as follows:
    \[
    E\ni(u,g,t)\stackrel{s}{\mapsto} \left( \exp(u)\cdot g, t\right)\in V
    \]

    Let us verify that $s$ is in fact a dominating fiber spray:
    
    \begin{itemize}
    \item
      By definition of $E$ we have
      $(u,g,t)\in E \implies u\in\Lie\Gc\implies \exp(u)\in\Gc$
      and
      $\exp(ru)(F(t))=F(t)\ \forall r\in\C$.
      By definition of $V$ in \eqref{V-def}
      for $(g,t)\in V$ we obtain
      \[
      g(C(t))=F(t)
      \]
      With
      $\exp(ru)\left(F(t)\right)=F(t)\ \forall r$
      this implies
      \[
      F(t)=\exp(ru)\left(F(t)\right)=\exp(ru)\left(g(C(t))\right)
      =\left(\exp(ru)\cdot g\right)\left(C(t)\right)
      \]

    which (specializing to the case $r=1$)
    implies
    \[
    (\exp(u)\cdot g,t)\in V.
    \]
      Thus $s$ defined as above is indeed a map from $E$ to $V$.

    \item
      For $u=0$ we have $\exp(0)=e_{\Gc}$ and therefore
      $\exp(0)\cdot g=g$.

      Thus
      \[
      s(0,g,t)=\left((\exp(0))\cdot g,t\right)=(g,t)
      \]
    \item
      \[
      \pi(s(u,g,t))=\pi(\exp(u)\cdot g, t)=t=\pi(g,t)
      =\pi\left(\tau(u,g,t)\right)
      \]
      Therefore $\pi\circ s=\pi\circ\tau$.
      
    \item
            Fix $(g,t)\in V$.
      We consider the space of ``vertical vector fields''
      \[
      W=\left\{ w\in T_{(0,g,t)}E : (D\tau)(w)=0\right\}.
      \]
      We have to show that $Ds$ maps $W$ surjectively onto
      \[
      W'=\left\{ w\in T_{(g,t)}V: (D\pi)(w)=0\right\}
      \]
      Let $H=(\Gc)_{F(t)}$. The exponential map from the Lie algebra
      $\Lie H$ to the Lie group $H$ is a diffeomorphism near $0\in\Lie H$.
      
      Furthermore, standard Lie group theory tells us that
      for every action of a Lie group $H$ on a manifold the fundamental
      $H$-vector fields span the tangent space of each $H$-orbit
      everywhere.

            Observe that the fiber
      \[
      V_t=\pi^{-1}(t)=\{(g,t):g\in \Gc: g \left( C(t)\right)=F(t)\}
      \]
      admits a natural transitive action of $H=(\Gc)_{F(t)}$
      given by
      \[
      H\ni h:(g,t)\mapsto (hg,t)
      \]
      The $H$-fundamental vector fields for this action
      span the tangent space of $V_t$ everywhere.
      Since $s(u,g,t)=\left(\exp(u)\cdot g,t\right)$,
      it follows that
      $Ds$ maps $W$ surjectively onto $T_{(g,t)}V_t=W'$.
          \end{itemize}

    Thus $\pi:V\to Z$ is elliptic in the sense of \cite{F2},
    Definition 6.1.2.

    Now \cite{F2}, Theorem~6.2.2 implies that there is a
    weak homotopy equivalence between the space of continuous sections
    and the space of holomorphic sections.
    This implies the assertion.
  \end{proof}

\section{Existence of holomorphic sections}

\begin{proposition}\label{section-2-con}
  Let $X$ be a non-compact Riemann surface.
  Let $S$ be a
  (not necessarily connected) complex Lie group and let
  $F$ be a connected complex manifold on which $S$ acts
  transitively and let
  $\pi:E\to X$ be a locally trivial holomorphic fiber bundle
  with fiber $F$ and structure group $S$.

  Then $\pi$ admits a global holomorphic section.
\end{proposition}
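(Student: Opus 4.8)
The plan is to assemble three ingredients: the classical structure theory of open Riemann surfaces, the topological section-existence result of Proposition~\ref{top-section}, and the Oka principle for elliptic maps (Theorem~\ref{thm-elliptic}), the ellipticity being supplied by Proposition~\ref{loc-triv-ell}.

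First I would recall the two standard facts about the base: a non-compact Riemann surface $X$ is a Stein manifold (Behnke--Stein), and it is homotopy equivalent to a (real) one-dimensional $CW$-complex (as already used in the proof of Proposition~\ref{global-con}; cf.~\cite{Hamm83}). Next, I would produce a \emph{continuous} section. The holomorphic locally trivial bundle $\pi:E\to X$ is a fortiori a locally trivial topological fiber bundle, its fiber $F$ is a connected manifold hence pathwise connected, and $X$ is homotopic to a one-dimensional $CW$-complex; therefore Proposition~\ref{top-section} provides a continuous section $\sigma_0:X\to E$.

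Finally I would upgrade $\sigma_0$ to a holomorphic section. Since $S$ is a complex Lie group acting transitively on the connected complex manifold $F$, and $\pi:E\to X$ is a holomorphic locally trivial fiber bundle with fiber $F$ and structure group $S$, Proposition~\ref{loc-triv-ell} shows that $\pi$ is an elliptic map; note that this argument requires no connectedness assumption on $S$. Its base $X$ is Stein, so Theorem~\ref{thm-elliptic} applies, and the continuous section $\sigma_0$ is homotopic to a holomorphic section $\sigma:X\to E$, which is the assertion. No step here is a genuine obstacle: the substantive content is already housed in Propositions~\ref{top-section} and~\ref{loc-triv-ell} and Theorem~\ref{thm-elliptic}. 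The only points needing care are checking that the two facts about $X$ (Steinness and one-dimensional homotopy type) are available, and applying Proposition~\ref{loc-triv-ell} with $G=S$ even when $S$ is disconnected.
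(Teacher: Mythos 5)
Your argument matches the paper's proof step for step: both deduce a continuous section from Proposition~\ref{top-section} using that $X$ is homotopy equivalent to a one-dimensional $CW$-complex, then invoke Proposition~\ref{loc-triv-ell} for ellipticity of $\pi$ and Theorem~\ref{thm-elliptic} (with $X$ Stein) to upgrade to a holomorphic section. The remark that Proposition~\ref{loc-triv-ell} does not require connectedness of the structure group is a sound observation, consistent with how the paper applies it.
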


\begin{proof}
  First we recall that a non-compact Riemann surface is homotopic
  to a real one dimensional $CW$-complex (\cite{Hamm83}).
  Thus the existence of  a continuous section follows from
  Proposition~\ref{top-section}.

  Due to Proposition~\ref{loc-triv-ell}
  $\pi:E\to X$ is an elliptic map.
  Furthermore $X$ is  Stein, because it is a non-compact Riemann surface.
  Hence Theorem~ \ref{thm-elliptic} implies the existence
  of a global holomorphic section.
\end{proof}

\section{Automorphisms of  Octonions}
\subsection{Automorphisms of $\Oct$}

\begin{proposition}\label{auto-ortho}
  Every ring automorphism of $\Oct$ is $\R$-linear, continuous,
  commutes with conjugation and preserves the scalar product given
  as
  \[
  \left<q,r\right>=\Re\left(q\bar r\right)
  \]
\end{proposition}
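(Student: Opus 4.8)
The plan is to reduce everything to the classical fact that $\Oct$ is a composition algebra and that its automorphism group is the compact real form of $G_2$, and then to extract the stated properties from that structure. First I would observe that a ring automorphism $\phi$ of $\Oct$ automatically fixes $1$ and hence $\Z\cdot 1$; the harder preliminary point is $\R$-linearity. For this I would use that $\Oct$ is an \emph{alternative} division algebra in which every element $x$ satisfies a quadratic relation $x^2-\Tr(x)x+\Nm(x)=0$ over $\R$, so that the real subalgebra generated by any single element is either $\R$, $\C$, or (after choosing an imaginary unit) a copy of $\C$; in particular $\phi$ maps the subalgebra $\R\cdot 1$ to itself, and since the only ring endomorphism of $\R$ is the identity (a consequence of $\R$ being, via squares, an ordered field with no nontrivial automorphisms), $\phi$ is $\R$-linear on $\R\cdot 1$ and therefore, being additive and multiplicative, $\R$-linear everywhere.

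Next I would identify conjugation intrinsically. The key point is that $\bar x = \Tr(x)\cdot 1 - x$ and that $\Tr(x)\cdot 1$ can be characterized algebraically: $x+\bar x$ is the unique real multiple of $1$ appearing in the quadratic relation $x^2 - \Tr(x)x + \Nm(x)=0$ satisfied by $x$. Since $\phi$ is $\R$-linear and multiplicative, applying $\phi$ to this relation shows $\phi(x)$ satisfies $\phi(x)^2-\Tr(x)\phi(x)+\Nm(x)=0$; but $\phi(x)$ also satisfies its own quadratic relation with coefficients $\Tr(\phi(x))$, $\Nm(\phi(x))$, and by uniqueness of the minimal polynomial (when $\phi(x)\notin\R$) we get $\Tr(\phi(x))=\Tr(x)$ and $\Nm(\phi(x))=\Nm(x)$; the case $\phi(x)\in\R$ is immediate from $\R$-linearity. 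Hence $\phi$ preserves $\Tr$ and $\Nm$, and therefore $\phi(\bar x)=\phi(\Tr(x)\cdot 1 - x)=\Tr(x)\cdot 1 - \phi(x)=\overline{\phi(x)}$, i.e. $\phi$ commutes with conjugation. Preservation of the scalar product $\langle q,r\rangle=\Re(q\bar r)=\tfrac12\Tr(q\bar r)$ then follows formally: $\langle\phi(q),\phi(r)\rangle=\tfrac12\Tr(\phi(q)\overline{\phi(r)})=\tfrac12\Tr(\phi(q)\phi(\bar r))=\tfrac12\Tr(\phi(q\bar r))=\tfrac12\Tr(q\bar r)=\langle q,r\rangle$, using multiplicativity of $\phi$ and the already-established invariance of $\Tr$.

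Finally, continuity is automatic once $\R$-linearity is known, since every $\R$-linear endomorphism of the finite-dimensional real vector space $\Oct$ is continuous. The main obstacle in this argument is the very first step, establishing $\R$-linearity of an a priori merely additive-multiplicative map: one must rule out pathological additive self-maps of $\R$, which is exactly where the order structure of $\R$ (squares are nonnegative, hence $\phi$ preserves the order, hence $\phi$ is continuous on $\R$ and thus the identity) enters in an essential way. Everything after that is bookkeeping with the rank-two minimal polynomials that the alternative law guarantees for octonions. I would structure the written proof as: (1) $\phi$ fixes $1$ and restricts to a ring automorphism of $\R\cdot 1$, hence is the identity there and $\Oct$-linearity follows; (2) $\phi$ preserves the quadratic relations, hence $\Tr$ and $\Nm$; (3) deduce commuting with conjugation and preservation of $\langle\,,\,\rangle$; (4) note continuity is then free.
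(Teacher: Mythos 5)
The paper itself offers no proof here, only citations to standard references on octonions, so there is no ``paper route'' to compare against; what you have written is essentially the standard direct argument via composition-algebra identities, and it is sound in outline. Your main chain --- fix $1$, deduce $\R$-linearity, use the rank-two minimal polynomial $x^2-\Tr(x)x+\Nm(x)=0$ and uniqueness of its coefficients for $x\notin\R$ to get invariance of $\Tr$ and $\Nm$, then conclude $\phi(\bar x)=\overline{\phi(x)}$ from $\bar x=\Tr(x)\cdot 1-x$, and finally $\langle\phi(q),\phi(r)\rangle=\langle q,r\rangle$ from $\langle q,r\rangle=\tfrac12\Tr(q\bar r)$ together with multiplicativity --- is correct, and continuity does come for free once $\R$-linearity is known.

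There is one gap you should close: the justification that $\phi(\R\cdot 1)\subseteq\R\cdot 1$. Your sentence about ``the real subalgebra generated by any single element is either $\R$ or a copy of $\C$'' does not actually deliver this, and in fact it is slightly circular, since speaking of a ``real subalgebra'' generated by an element already presupposes an $\R$-module structure that a merely additive-and-multiplicative $\phi$ is not yet known to respect (the subring generated by $1$ in the ring-theoretic sense is only $\Z$). The clean way to get $\phi(\R\cdot 1)=\R\cdot 1$ is to observe that $\R\cdot 1$ is the \emph{center} of $\Oct$ (the set of elements commuting, and in the alternative setting also associating, with all of $\Oct$), which is a purely ring-theoretic characterization and hence is preserved by any ring automorphism. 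Once you have $\phi(\R)\subseteq\R$, your order-theoretic argument that $\phi|_\R=\mathrm{id}$ (squares are nonnegative, so $\phi$ is monotone on $\R$, so it is the identity) is exactly right, and the rest of the proof goes through unchanged. Also note that the step ``uniqueness of the minimal polynomial'' is really just linear independence of $\{1,\phi(x)\}$ when $\phi(x)\notin\R\cdot 1$; spelling that out makes the coefficient comparison transparent.
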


  \begin{proof}
See \cite{B}\cite{SV}.
  \end{proof}
  
However, unlike in the case of the quaternionic algebra
$\H$, not every
orientation preserving orthogonal linear map fixing $\R$
is an $\R$-algebra automorphism of $\Oct$
(see Example~\ref{orth-not-auto} below).

In the next subsection, we present
a precise description of the automorphism group of $\Oct$,
using the theory of {\em ``basic triples''}.

\subsection{Basic triples}

\begin{definition}\label{def-basic-triple}
  A {\em``basic triple''}
  is an ordered triple of elements
  $e_1,e_2,e_3\in\Oct$ 
  such that

  \begin{enumerate}
  \item
    $||e_k||=1\ \forall k$.
  \item
    every $e_k$ is purely imaginary.
  \item
    $e_1$ and $e_2$ are orthogonal.
  \item
    $e_3$ is orthogonal to $e_1$, $e_2$ and $e_1e_2$.
  \end{enumerate}
\end{definition}

  \begin{theorem}\label{simple-triple}
    The automorphism group of $\Oct$ acts
    {\em simply transitively}\footnote{i.e., for every $x,y$ there is a
      {\em unique} element $g$ mapping $x$ to $y$.}
    on the set of basic triples.
  \end{theorem}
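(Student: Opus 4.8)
The plan is to establish transitivity first and then uniqueness, both by a direct ``step-by-step'' construction following the four conditions in Definition~\ref{def-basic-triple}. Fix the standard basic triple $(i,j,\ell)$ coming from the usual octonionic basis $1,i,j,ij,\ell,i\ell,j\ell,(ij)\ell$, so that $1,e_1,e_2,e_1e_2$ spans a copy of $\H$ and then $e_3$ together with this copy generates all of $\Oct$ via the Cayley--Dickson doubling. The key observation to exploit is that, by Proposition~\ref{auto-ortho}, every automorphism is $\R$-linear, commutes with conjugation, and is orthogonal for $\langle q,r\rangle=\Re(q\bar r)$; hence an automorphism automatically sends a basic triple to a basic triple (it preserves norms, purely imaginary elements, orthogonality, and the product $e_1e_2$), so the action is well-defined on the set of basic triples. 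For the converse direction it suffices to show: given any basic triple $(e_1,e_2,e_3)$ there is an automorphism carrying $(i,j,\ell)$ to it, and it is unique.

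\emph{Transitivity.} First I would recall (or quickly prove from the Cayley--Dickson construction) the multiplication rules that follow from the four axioms: if $(e_1,e_2,e_3)$ is a basic triple then $e_1,e_2,e_1e_2$ anticommute pairwise, square to $-1$, and $1,e_1,e_2,e_1e_2$ span a subalgebra isomorphic to $\H$; moreover $e_3$ anticommutes with each of $e_1,e_2,e_1e_2$, squares to $-1$, and the eight elements
\[
1,\ e_1,\ e_2,\ e_1e_2,\ e_3,\ e_1e_3,\ e_2e_3,\ (e_1e_2)e_3
\]
form an orthonormal basis of $\Oct$ with the standard octonionic multiplication table. This is the standard fact that a basic triple is precisely the data needed to reconstruct $\Oct$ by doubling $\H$; it can be cited from \cite{B} or \cite{SV}, or checked by a short alternativity computation. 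Given this, define a linear map $\phi:\Oct\to\Oct$ on the standard basis by sending $1\mapsto1$ and the standard basis elements to the corresponding products of $e_1,e_2,e_3$ as displayed above. Because both bases satisfy the \emph{same} multiplication table, $\phi$ is an algebra homomorphism; since it maps a basis to a basis it is bijective, hence an automorphism, and by construction $\phi(i,j,\ell)=(e_1,e_2,e_3)$. Composing, any two basic triples are related by an automorphism.

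\emph{Simplicity (uniqueness).} It is enough to show that an automorphism $\psi$ fixing the standard basic triple $(i,j,\ell)$ is the identity. But $\psi$ is an algebra homomorphism fixing $1,i,j,\ell$; applying $\psi$ to the products $ij$, $i\ell$, $j\ell$, $(ij)\ell$ shows it fixes every element of the standard basis, hence $\psi=\mathrm{id}$ by $\R$-linearity (Proposition~\ref{auto-ortho}). Equivalently, an automorphism is determined by its values on any generating set, and $\{i,j,\ell\}$ generates $\Oct$ as an $\R$-algebra. Combining transitivity with this uniqueness gives that $\Aut(\Oct)$ acts simply transitively on the set of basic triples.

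\emph{Expected main obstacle.} The only nontrivial point is the bookkeeping that the eight products of $e_1,e_2,e_3$ really reproduce the octonionic multiplication table verbatim — i.e.\ that conditions (i)--(iv), together with alternativity, force all the sign and associativity relations (for instance $e_1(e_2e_3)=-(e_1e_2)e_3$ and the Moufang-type identities needed to multiply non-adjacent basis elements). I would handle this by invoking the Cayley--Dickson description of $\Oct$ as $\H\oplus\H$: axioms (i)--(iii) identify the $\H$-part generated by $e_1,e_2$, axiom (iv) says $e_3$ is a unit vector orthogonal to that $\H$, and in an alternative algebra such an element automatically satisfies the doubling relations, so no case-by-case table check is needed. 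With that structural input in hand, both transitivity and uniqueness are immediate, and the rest is routine.
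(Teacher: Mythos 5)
The paper does not give its own argument here; it simply cites \cite{B} and \cite{SV}. Your proposal reconstructs the standard argument those references contain: use Proposition~\ref{auto-ortho} to see that automorphisms permute basic triples, observe that a basic triple determines, via Cayley--Dickson doubling, an orthonormal basis of $\Oct$ whose multiplication table is forced by alternativity, deduce transitivity by mapping the standard basis to this one, and deduce freeness from the fact that $\{e_1,e_2,e_3\}$ generates $\Oct$ as an $\R$-algebra. This is correct and is essentially the same route the cited sources take; the only point that would need to be spelled out in a fully self-contained version is the multiplication-table verification that you explicitly flag as the ``expected main obstacle,'' and your plan of deferring it to the Cayley--Dickson description (again citing \cite{B} or \cite{SV}) is a legitimate way to close that gap.
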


  \begin{proof}
See \cite{B}\cite{SV}.
  \end{proof}

\begin{corollary}\label{oct-iso}
  Let $q\in \Oct\setminus\R$. Then the isotropy group
  \[
  I=\{\phi\in\Aut(\Oct):\phi(q)=q\}
  \]
  is isomorphic to $SU_3(\C)$.
\end{corollary}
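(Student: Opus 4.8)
The plan is to exploit Theorem~\ref{simple-triple}: since $\Aut(\Oct)$ acts simply transitively on the set of basic triples, the group $\Aut(\Oct)$ has dimension equal to $\dim$ of the variety of basic triples. First I would compute that dimension. By Proposition~\ref{auto-ortho} every automorphism is orthogonal and fixes $\R$, hence fixes the $7$-dimensional imaginary subspace $W$ and is determined by its restriction to $W$. A basic triple $(e_1,e_2,e_3)$ lives in the unit sphere of $W$: $e_1$ ranges over $S^6$, then $e_2$ over the unit sphere of $e_1^\perp$ which is $S^5$, and $e_3$ over the unit sphere of $\mathrm{span}(e_1,e_2,e_1e_2)^\perp$ which is $S^3$. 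So the set of basic triples has dimension $6+5+3=14$, giving $\dim\Aut(\Oct)=14$; in fact $\Aut(\Oct)=G_2$, the compact exceptional Lie group, but I only need the dimension count plus transitivity.

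Next I would analyze the isotropy group $I=\{\phi\in\Aut(\Oct):\phi(q)=q\}$ for $q\in\Oct\setminus\R$. Writing $q=a+bJ$ with $a\in\R$, $b>0$, $J\in\SOct$, fixing $q$ is the same as fixing $J$ (since $a,b$ are invariant and $b\neq 0$). So $I$ is the stabilizer in $\Aut(\Oct)$ of a fixed imaginary unit $J$. Let $\C_J=\R+\R J\cong\C$; any $\phi\in I$ fixes $\C_J$ pointwise. The orthogonal complement $\C_J^\perp$ inside $W$ is $6$-dimensional, and left multiplication by $J$ gives $\C_J^\perp$ the structure of a complex vector space (since $J^2=-1$ and $J$ commutes with nothing problematic here — one checks $J\cdot x\in\C_J^\perp$ for $x\in\C_J^\perp$ via alternativity and the orthogonality relations). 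Thus $\C_J^\perp\cong\C^3$. Any $\phi\in I$ preserves $\C_J^\perp$, commutes with left multiplication by $J$ on it (because $\phi$ is an algebra map fixing $J$), and preserves the Hermitian inner product obtained from $\langle\cdot,\cdot\rangle$ together with $J$. Hence $\phi|_{\C_J^\perp}\in U_3(\C)$, giving a homomorphism $I\to U_3(\C)$.

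Then I would show this homomorphism is injective and lands in $SU_3(\C)$, and is surjective onto $SU_3(\C)$. Injectivity: $\Oct$ as an algebra is generated by $J$ together with $\C_J^\perp$ (indeed $\C_J\oplus\C_J^\perp=W$ and $W$ generates $\Oct$), so $\phi$ is determined by $\phi|_{\C_J^\perp}$. For the determinant condition: choose a basic triple of the form $(J,e_2,e_3)$ with $e_2,e_3\in\C_J^\perp$; then $e_2, Je_2, e_3, Je_3, \ldots$ relate to a complex orthonormal frame of $\C_J^\perp$, and the octonionic product structure forces the real linear map to be complex-linear with determinant $1$ — concretely, the "volume form" on $\C_J^\perp$ coming from the algebra structure is preserved, which is exactly the $SU_3$ condition. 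For surjectivity and the final identification, I would use a dimension count: $\dim I = \dim\Aut(\Oct) - \dim(\text{orbit of }J)=14-6=8=\dim SU_3(\C)$, the orbit being $S^6=\SOct$ because $\Aut(\Oct)$ acts transitively on imaginary units (immediate from simple transitivity on basic triples, as any unit imaginary vector can be taken as $e_1$). Since $I\to SU_3(\C)$ is an injective Lie group homomorphism between groups of the same dimension $8$ and $SU_3(\C)$ is connected, the image is open, hence all of $SU_3(\C)$; combined with injectivity, $I\cong SU_3(\C)$.

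The main obstacle I expect is the careful verification that $\phi|_{\C_J^\perp}$ is genuinely complex-linear with determinant one — i.e., pinning down the precise Hermitian/volume structure on $\C_J^\perp$ that the octonion multiplication induces and that automorphisms fixing $J$ must preserve. This requires a hands-on computation with the multiplication table (using alternativity and the Moufang identities to handle non-associativity) to see that fixing $J$ plus being an algebra automorphism forces exactly the $SU_3$-invariant structure, not merely $U_3$. An alternative, cleaner route avoiding the explicit determinant computation is to invoke the classical fact that $G_2=\Aut(\Oct)$ and that the stabilizer of a point in $S^6=G_2/SU_3$ is $SU_3$; but since the excerpt develops everything from basic triples, I would prefer the self-contained dimension-count argument above, citing \cite{B} or \cite{SV} for the determinant-one claim if the direct check proves too long.
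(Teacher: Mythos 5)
Your proof follows the paper's own route almost exactly: reduce $q$ to a unit imaginary $e_1$; compute $\dim_{\R} I = 8$ from simple transitivity of $\Aut(\Oct)$ on basic triples (completing $e_1$ is a choice in $S^5$, then in $S^3$); equip $P = \langle 1, e_1\rangle^{\perp}$ with the complex structure given by left multiplication by $e_1$ (alternativity giving $e_1(e_1 x) = -x$); observe that each $\phi\in I$ is orthogonal, fixes $\langle 1, e_1\rangle$ pointwise, and commutes with that complex structure, so $I$ embeds in $U(P)\cong U_3(\C)$; and finish by a dimension count. The one step you explicitly flag as the expected obstacle --- showing $\det_{\C}(\phi|_P)=1$ --- is the only place the paper does something you do not: it invokes Theorem~\ref{aut-g2} (that $\Aut(\Oct)$ is a simply connected compact simple group of type $G_2$) to conclude $\det\phi = 1$, hence $\det(\phi|_P)=1$, hence $I\subset SU(P)$; this replaces your ``volume form from the algebra structure'' heuristic with a soft group-theoretic argument and removes the need to cite \cite{B} or \cite{SV} for that point. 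Note also that you can bypass the determinant entirely: $I$ is connected (it is parametrized by the completions of $e_1$ to a basic triple, an $S^3$-bundle over $S^5$), so it is a connected $8$-dimensional closed subgroup of the $9$-dimensional $U_3(\C)$. Since $\mathfrak{su}_3$ is compact simple it has no codimension-one subalgebra (such a subalgebra would have a one-dimensional $\mathrm{ad}$-invariant complement, which would be an abelian ideal), so the only $8$-dimensional subalgebra of $\mathfrak{u}_3$ is $\mathfrak{su}_3$, forcing $I = SU_3(\C)$ without ever pinning down the complex determinant.
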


\begin{proof}
  Let $e_1$ be a purely imaginary element of $\Oct$
  with $||e_1||=1$
  such that $q=r+te_1$ for some $r,t\in\R$, $t\ne 0$.

  Then the isotropy at $q$ equals the isotropy at $e_1$.
  We choose $e_2,e_3$ such that $(e_1,e_2,e_3)$ is a basic triple.

  Given $e_1$, we have to choose $e_2$ in a $5$-dimensional sphere and
  then $e_3$ in a three-dimensional sphere.
  Since $\Aut(\Oct)$ acts simply transitively on the set
  of basic triples, $I$ can be identified with the set
  of basic triples with fixed $e_1$. It follows that $\dim_\R(I)=8$.

  Note that $e_1\cdot e_1=-1$. Because $\Oct$ is an alternative algebra,
  this implies
  \[
  e_1\cdot(e_1\cdot x)=(e_1\cdot e_1)\cdot x=-x\ \forall x\in \Oct.
  \]

  Let $P$ denote the orthogonal complement of $\left<1,e_1\right>$ in $\Oct$.
  Then $J:x\mapsto e_1\cdot x$ defines a complex structure on $P$.
  Note that every $\phi\in I$ acts
  trivially on $\left<1,e_1\right>$ and therefore
  stabilizes $P$.
  
  Observe that
  \[
  \phi(e_1\cdot x)=\phi(e_1)\cdot\phi(x)=e_1\cdot\phi(x)
  \ \forall x\in P,\phi\in I
  \]
  i.e., $\phi$ commutes with left multiplication by $e_1$.
  This means that $\phi$ is a {\em unitary} transformation with
  respect to the complex structure on $P$ defined by
  $J:x\mapsto e_1\cdot x$.

  Furthermore we note that $\det\phi=1$, because $\Aut(\Oct)$ is simple
  (Theorem~\ref{aut-g2}).
  Since $\phi$ acts trivially on $\left<1,e_1\right>$, it follows that
  $\det(\phi|_P)=1$.

  Therefore
  \[
  I\subset SU(P)\simeq SU_3(\C).
  \]
  For dimension reasons (both $I$ and $SU_3(\C)$ are real
  $8$-dimensional) we have equality.
\end{proof}
    
  \begin{corollary}\label{n-tr-auto}
    Let $p,q\in \Oct$.

    Then the following is equivalent;
    \begin{itemize}
    \item
      $N(p)=N(q)$ and $Tr(p)=Tr(q)$.
    \item
      There is an $\R$-algebra automorphism $\phi$ of $\Oct$
      such that
      $\phi(p)=q$.
    \end{itemize}
  \end{corollary}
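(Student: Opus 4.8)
The plan is to prove Corollary~\ref{n-tr-auto} by reducing the general statement to the special case $q\in\SOct$ already understood via basic triples, exploiting the decomposition $\Oct=\R\oplus W$ and the simple transitivity of $\Aut(\Oct)$ on basic triples (Theorem~\ref{simple-triple}).

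First I would dispose of the easy direction: if $\phi$ is an $\R$-algebra automorphism, then by Proposition~\ref{auto-ortho} it commutes with conjugation, hence $\Tr(\phi(p))=\phi(p)+\overline{\phi(p)}=\phi(p+\bar p)=\phi(\Tr(p))=\Tr(p)$ since $\Tr(p)\in\R$ and $\phi$ fixes $\R$; similarly $\Nm(\phi(p))=\phi(p)\overline{\phi(p)}=\phi(p\bar p)=\phi(\Nm(p))=\Nm(p)$. So $\phi(p)=q$ forces $\Tr(p)=\Tr(q)$ and $\Nm(p)=\Nm(q)$.

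For the converse, write $p=x+v$, $q=x'+v'$ with $x,x'\in\R$ and $v,v'\in W=\ker\Tr$. The hypotheses give $2x=\Tr(p)=\Tr(q)=2x'$, so $x=x'$, and $\Nm(p)=x^2+\|v\|^2$ (since $\Nm$ restricted to $W$ is $\|\cdot\|^2$), hence $\|v\|=\|v'\|=:t\ge 0$. If $t=0$ then $p=q=x$ and the identity automorphism works; otherwise set $e_1=v/t$ and $e_1'=v'/t$, both unit purely imaginary octonions. It suffices to find $\phi\in\Aut(\Oct)$ with $\phi(e_1)=e_1'$, since then $\phi$ fixes $\R$ and scales linearly, giving $\phi(p)=x+t\,e_1'=q$. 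To produce such $\phi$, extend $e_1$ to a basic triple $(e_1,e_2,e_3)$ and $e_1'$ to a basic triple $(e_1',e_2',e_3')$ — possible because, given a unit imaginary $e_1$, one may choose $e_2$ on the unit $5$-sphere in $\langle 1,e_1\rangle^\perp$ and then $e_3$ on the unit $3$-sphere in $\langle 1,e_1,e_2,e_1e_2\rangle^\perp$, exactly as in the proof of Corollary~\ref{oct-iso}. By Theorem~\ref{simple-triple} there is (a unique) $\phi\in\Aut(\Oct)$ sending $(e_1,e_2,e_3)$ to $(e_1',e_2',e_3')$; in particular $\phi(e_1)=e_1'$, as required.

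The only point needing a little care — the main obstacle, such as it is — is justifying that any unit purely imaginary octonion occurs as the first entry of a basic triple, i.e.\ that the spheres from which $e_2$ and $e_3$ are chosen are nonempty; this is immediate from dimension counting ($\dim_\R\langle 1,e_1\rangle^\perp=6$ and $\dim_\R\langle 1,e_1,e_2,e_1e_2\rangle^\perp=4$), together with the fact that $\langle 1,e_1\rangle^\perp\subset W$ so the chosen $e_2,e_3$ are automatically purely imaginary, and that orthogonality of $e_1,e_2$ and of $e_2$ with $e_1e_2$ holds by construction. Everything else is the bookkeeping of the decomposition $\Oct=\R\oplus W$ and the multiplicativity and conjugation-invariance of $\Nm$ and $\Tr$ recorded earlier.
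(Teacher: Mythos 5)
Your proof is correct and follows essentially the same route as the paper: decompose $\Oct=\R\oplus W$, deduce from the hypotheses that the real parts agree and the imaginary parts have equal norm, normalize the imaginary parts, extend each to a basic triple, and invoke Theorem~\ref{simple-triple} to produce the automorphism; the converse direction likewise matches via linearity and conjugation-equivariance. You fill in the dimension-counting justification for extending a unit imaginary element to a basic triple, which the paper leaves implicit, but the argument is the same.
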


      {\em Remark:} A similar statement is to be found
      in \cite{DentoniSce}, $L_4$, p.260. For the convenience
      of the reader we nevertheless provide a proof.
      
  \begin{proof}
    For every $q\in \Oct$ let $q_0$ denote the real
    part and
    let $q_v$ denote the imaginary (sometimes called: vectorial) part,
    i.e., $q=q_0+q_v$ with $q_0\in\R$ and $q_v=-\bar q_v$.
    Then $Tr(q)=2q_0$ and $N(q)=||q||^2=||q_0||^2+||q_v||^2$.

    This implies: If $Tr(p)=Tr(q)$ and $N(p)=N(q)$, then
    $p_0=q_0$ and $||p_v||=||q_v||$.

    If $||p_v||=||q_v||=0$, then $p=p_0=q_0=q$ and we may take the
    identity map as $\phi$.
    Thus we may assume that $||p_v||=||q_v||>0$.

    Define
    \[
    \tilde p_v=\frac{p_v}{||p_v||},\ \ \tilde q_v=\frac{q_v}{||q_v||}
    \]
    We may complete $\{\tilde p_v\}$ resp.~$\{\tilde q_v\}$
    to a basic triple of $\Oct$.
    Now Theorem~\ref{simple-triple} implies that there exists
    an automorphism $\phi$ of $\Oct$ with $\phi(\tilde p_v)=\tilde q_v$.
    Since $\phi$ is linear, and $||p_v||=||q_v||$, it follows
    that $\phi(p_v)=q_v$.
    Because $\phi$, like every algebra automorphism of $\Oct$, acts trivially
    on the center $\R$, we may conclude that $\phi(p)=\phi(q)$.

    For the converse, let $\phi\in\Aut(\Oct)$. Then $\phi$ acts linearly
    on $W$ and trivially on the center $\R$. As a consequence, $\phi$
    commutes with conjugation. Due to the definition of $N$ and $Tr$
    this implies that $Tr(\phi(x))=Tr(x)$ and $N(\phi(x))=N(x)$
    for all $x\in \Oct$.
  \end{proof}

  \begin{example}\label{orth-not-auto}
    Let $(e_1,e_2,e_3)$ be a basic triple for $\Oct$, and let
    $P$ be the orthogonal complement of
    $\left<1,e_1,e_2,e_3\right>$ in $\Oct$.
    Since $\dim_{\R}(P)=4$ and therefore $SO(P)\ne\{Id\}$, there is
    a non-trivial orientation preservation orthogonal transformation
    $\phi_0$ on $P$. It extends to a bilinear self-map $\phi\in SO(\Oct)$
    with $\phi|_{P}=\phi_0$, $\phi|_{\R}=id$ and $\phi(e_k)=e_k$ ($k=1,2,3$).
    By Theorem~\ref{simple-triple}, an $\R$-algebra automorphism of $\Oct$
    preserving $e_1$, $e_2$ and $e_3$ must be the identity map.
    Thus $\phi$ is an orientation preserving ortho\-gonal transformation
    on $\Oct$ which is not an $ \R$-algebra automorphism.
  \end{example}
  
  \begin{theorem}\label{aut-g2}
    The automorphism group of the $\R$-algebra $\Oct$ is a
    simply-connected compact simple real Lie group of type $G_2$.
  \end{theorem}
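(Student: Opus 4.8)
The plan is to assemble the structure theory already set up above — Theorem~\ref{simple-triple}, Corollary~\ref{oct-iso} and Proposition~\ref{auto-ortho} — with a small amount of standard Lie theory. First I record two elementary facts. The group $\Aut(\Oct)$ is compact: by Proposition~\ref{auto-ortho} every automorphism is $\R$-linear and preserves $\langle q,r\rangle=\Re(q\bar r)$, so $\Aut(\Oct)$ is a subgroup of $O(8)$, and it is closed (being cut out by the identities $\phi(xy)=\phi(x)\phi(y)$), hence a compact Lie group by Cartan's closed-subgroup theorem. It also acts faithfully on the imaginary subspace $W=\{q\in\Oct:\Tr(q)=0\}\cong\R^7$, since an automorphism fixes $1$ and $\Oct=\R\oplus W$; consequently $\mathfrak g:=\Lie(\Aut(\Oct))$ embeds into $\mathfrak{so}(W)=\mathfrak{so}(7)$. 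Then I compute the dimension using Theorem~\ref{simple-triple}: fixing a basepoint basic triple identifies $\Aut(\Oct)$ with the manifold of all basic triples, and building a basic triple means choosing $e_1$ on the $6$-sphere of unit imaginary octonions, then $e_2$ on the $5$-sphere of unit imaginary octonions orthogonal to $e_1$, then $e_3$ on the $3$-sphere of unit imaginary octonions orthogonal to $\langle e_1,e_2,e_1e_2\rangle$; hence $\dim_\R\Aut(\Oct)=6+5+3=14$.

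For connectedness and simple connectedness I use the evaluation fibration $\phi\mapsto\phi(e_1)$, for a fixed unit imaginary octonion $e_1$. By Theorem~\ref{simple-triple} the orbit of $e_1$ is all of $S^6$ (any two unit imaginary octonions extend to basic triples, and some automorphism carries one triple to the other), so this map realises $\Aut(\Oct)$ as a principal bundle over $S^6$ whose fibre, by Corollary~\ref{oct-iso}, is the stabiliser $SU_3(\C)$. From the fibre bundle $SU_3(\C)\to\Aut(\Oct)\to S^6$ and the long exact homotopy sequence, using that $SU_3(\C)$ and $S^6$ are connected and simply connected, we obtain $\pi_0(\Aut(\Oct))=0$ and $\pi_1(\Aut(\Oct))=0$.

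It remains to see that $\Aut(\Oct)$ is simple of type $G_2$. Since it acts transitively on $S^6\subset W$, its representation on $W\cong\R^7$ is irreducible; an odd-dimensional real irreducible representation is of real type, so $W\tensor_\R\C\cong\C^7$ is irreducible too. A central element of $\mathfrak g$ commutes with this irreducible action, hence is a scalar by Schur, and being skew-symmetric it is $0$; so $\mathfrak g$ has trivial centre and is semisimple. If $\mathfrak g$ split as a nontrivial product $\mathfrak g_1\oplus\mathfrak g_2$, the irreducible $\C^7$ would decompose as a tensor product $V_1\tensor V_2$ of irreducible $\mathfrak g_i$-representations with $\dim V_1\cdot\dim V_2=7$; as $7$ is prime one factor would act trivially on $W$, contradicting faithfulness. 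Hence $\mathfrak g$ is simple, $\Aut(\Oct)$ is a simply-connected compact simple real Lie group of dimension $14$, and since $\mathfrak{g}_2$ is the unique compact simple Lie algebra of dimension $14$, its type is $G_2$.

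The step I expect to be the main obstacle is this last one: compactness, the dimension count and connectedness are essentially bookkeeping on top of Theorem~\ref{simple-triple} and Corollary~\ref{oct-iso}, whereas excluding a $14$-dimensional splitting such as $\mathfrak{su}_2\oplus\mathfrak{su}_2\oplus\mathfrak{su}_3$ genuinely requires the faithful irreducible $7$-dimensional representation together with the primality of $7$.
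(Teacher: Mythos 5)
Your argument is, at bottom, a genuine proof, and it is worth noting that the paper itself only proves Theorem~\ref{aut-g2} by citation to \cite{B}, so a self-contained argument is not superfluous. Compactness via Proposition~\ref{auto-ortho} and the closed-subgroup theorem, faithfulness of the action on $W$, the dimension count $6+5+3=14$ via Theorem~\ref{simple-triple}, the irreducibility of $W$ from transitivity on $S^6$, the real-type observation, the Schur argument killing the centre, and the external-tensor-product argument using that $7$ is prime: all of these are correct, and identifying the type by the fact that $\mathfrak g_2$ is the unique compact simple Lie algebra of dimension $14$ is legitimate.

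There is, however, one genuine circularity. You deduce connectedness and simple connectedness from the fibration $SU_3(\C)\to\Aut(\Oct)\to S^6$, invoking Corollary~\ref{oct-iso} to identify the fibre as $SU_3(\C)$. But the paper's proof of Corollary~\ref{oct-iso} explicitly cites Theorem~\ref{aut-g2}: it uses simplicity of $\Aut(\Oct)$ to force $\det\phi=1$ and thereby upgrade $I\subset U(P)$ to $I\subset SU(P)$. So in the chain you propose, the corollary presupposes the very theorem you are proving. The fix is easy and stays entirely inside your framework: do not pass through Corollary~\ref{oct-iso} at all. Identify $\Aut(\Oct)$ diffeomorphically with the manifold of basic triples via Theorem~\ref{simple-triple}, and observe that this manifold sits in an iterated fibre bundle with fibres $S^3$ over a space fibred by $S^5$ over $S^6$. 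Since $S^3$, $S^5$, $S^6$ are connected and simply connected, the long exact homotopy sequence twice gives $\pi_0=\pi_1=0$ for $\Aut(\Oct)$ directly, with no reference to the isotropy group. (Once one has simple connectedness, one can even go back and repair Corollary~\ref{oct-iso} without invoking simplicity: the isotropy $I$ is the $S^3$-over-$S^5$ piece of that tower, hence itself simply connected, and a simply connected compact group admits no nontrivial homomorphism to $U(1)$, so $\det_\C(\,\cdot\,|_P)\equiv 1$ and $I\subset SU(P)$ follows. But for the present theorem you do not need this.)
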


  \begin{proof}
    See \cite{B}.
  \end{proof}

      \begin{theorem}\label{autc-g2}
    The automorphism group of the $\C$-algebra $\Octc$ is a
    connected complex simple Lie group of type $G_2$.
    \end{theorem}

    \begin{proof}
      See  \cite{SV}, Theorem 2.3.5. %
    \end{proof}
      
    \begin{corollary}
      Let $G$ be the automorphism group of the real algebra $\Oct$
      and let $\Gc$ be the automorphism group of the
      $\C$-algebra $\Octc$ and consider the induced action of $G$
      on $\Octc$.

      Then $\Gc$ is the smallest complex Lie subgroup of
      $GL_\C(\Octc)$ containing $G$.
    \end{corollary}

    \begin{proof}
      Let $H$ be the smallest complex Lie subgroup
      containing $G$. Since $\Gc$ is a complex Lie group,
      $H\subset\Gc$. On the other hand, 
      $G$ is totally real and $\dim_\R(G)=14=\dimc(\Gc)$.
      Hence $\dim_\C(H)=14=\dim_\C(\Gc)$. In combination
      with the connectedness of $\Gc$ and $H\subset\Gc$
      this implies $H=\Gc$, i.e., $\Gc$ is the smallest complex
      Lie subgroup of $GL_\C(\Octc)$ containing $G$.
    \end{proof}

   \section{Orbits in the complexified algebra}

  The proposition below is principally applied to the situation,
  where $A=\Oct$ and $A=\R\oplus W$ as vector space,
  $W$ being the subspace of totally imaginary elements.

  In \cite{BWH} (Proposition~12.1)
  we deduced the following proposition:
  
  \begin{proposition}\label{VC-orbits}
  Let $W=\R^n$, and let $G$ be a connected real Lie group
  acting by orthogonal linear transformations on $W$ such that
  the unit sphere $S=\{v\in\R^n:||v||=1\}$ is a $G$-orbit.

  Let $W_{\C}=W\tensor_\R\C$. Let $B$ denote the $\C$-bilinear form
  on $W_{\C}$ extending the standard euclidean scalar product on $W=\R^n$.
  
  Let $\GC$ be the smallest complex Lie subgroup of $GL(W_{\C})$
  containing $G$. Then the $\GC$-orbits in $W_{\C}$ are the
  following:

  \begin{itemize}
    \item
    $H_\lambda=\{v\in W_{\C}:B(v,v)=\lambda\}$ for $\lambda\in\C^*$.
  \item
    $H_0=\{v\in W_{\C}:B(v,v)=0\}\setminus\{0\}$.
  \item
    $\{0\}$.
  \end{itemize}
\end{proposition}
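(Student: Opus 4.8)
The plan is to identify the $\GC$-orbits in $W_\C$ by combining the explicit description of the $G$-orbits in $W=\R^n$ with the fact that a smallest complexification has ``enough'' orbits. First I would record the structure of $G$-orbits on the real sphere data: since $G$ acts by orthogonal transformations and $S=\{v:||v||=1\}$ is a single $G$-orbit, each nonzero real vector $v$ lies in the $G$-orbit of $||v||\cdot e_1$ for a fixed unit vector $e_1$, and $\{0\}$ is its own orbit. In particular the $G$-invariant function $v\mapsto ||v||^2 = B(v,v)|_W$ separates $G$-orbits on $W\setminus\{0\}$.

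Next I would establish the three sets $H_\lambda$ ($\lambda\in\C^*$), $H_0\setminus\{0\}$, and $\{0\}$ are $\GC$-invariant and pairwise disjoint, and that they cover $W_\C$. Invariance is the key point: the $\C$-bilinear form $B$ is invariant under $O(W)$, hence under $G$, hence under the smallest complex subgroup $\GC\subset GL(W_\C)$ containing $G$ — because the subgroup of $GL(W_\C)$ preserving $B$ is a closed complex subgroup containing $G$, so it contains $\GC$. Thus $v\mapsto B(v,v)$ is a $\GC$-invariant holomorphic function on $W_\C$, which immediately gives that each $H_\lambda$ and $H_0\setminus\{0\}$ and $\{0\}$ is a union of $\GC$-orbits, and these clearly partition $W_\C$.

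The substantive part is transitivity of $\GC$ on each piece. For $\{0\}$ this is trivial. For $H_\lambda$ with $\lambda\in\C^*$ and for $H_0\setminus\{0\}$, I would argue as follows. The complexified group $\GC$ contains $G$ and also all elements $\exp(i\,\xi)$ for $\xi$ in the Lie algebra of $G$ (viewed inside $\mathfrak{gl}(W_\C)$), hence contains a complex subgroup whose Lie algebra is the complexification $\mathfrak g_\C$ of $\Lie(G)$. The $\GC$-orbit of a point $v_0$ is a locally closed complex submanifold whose tangent space is $\mathfrak g_\C\cdot v_0 = (\mathfrak g\cdot v_0)_\C$. Taking $v_0 = e_1$ (a real unit vector), $\mathfrak g\cdot e_1 = T_{e_1}S$ is the real tangent space to the sphere, which has real dimension $n-1$; hence $\mathfrak g_\C\cdot e_1$ has complex dimension $n-1$, so the $\GC$-orbit of $e_1$ is an $(n-1)$-dimensional complex submanifold of the smooth $(n-1)$-dimensional quadric $H_1$. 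Since $H_1$ is connected (for $n\ge 2$; the degenerate small cases are handled directly) and irreducible, an open orbit must be dense, and a dense orbit in a homogeneous-looking situation is all of $H_1$ once one checks the orbit is also closed in $H_1$ — alternatively, one invokes that $H_1$ is itself a homogeneous space $O(n,\C)/O(n-1,\C)$ and $\GC\supset$ (the identity component of) $O(n,\C)$ because $\GC$ is a complex group containing the compact form $G$ which is Zariski-dense in $O(n,\C)$. That last observation is cleanest: $G$ acts transitively on $S^{n-1}$, so $G$ surjects onto $SO(n)$ up to finite index, its Zariski closure contains $SO(n,\C)$, and hence $\GC\supset SO(n,\C)$, which already acts transitively on every $H_\lambda$, $\lambda\in\C^*$, and on $H_0\setminus\{0\}$ by the standard theory of complex quadrics. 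Finally, by $G$-invariance again, $B(v,v)$ takes every value in $\C$ on $W_\C$ so every $H_\lambda$ is nonempty, and $H_0\setminus\{0\}$ is nonempty for $n\ge 2$.

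The main obstacle I anticipate is making precise the passage ``$\GC$ contains $SO(n,\C)$'': one must be careful that ``smallest complex Lie subgroup of $GL(W_\C)$ containing $G$'' genuinely contains the complexification of $\Lie(G)$ and hence the identity component of the Zariski closure of $G$. The comparison of real-Lie-group generation with Zariski closure is the delicate technical step; once it is in place, the transitivity statements are classical facts about the complex orthogonal group acting on its quadrics, and the disjointness and covering assertions are immediate from invariance of $B(v,v)$.
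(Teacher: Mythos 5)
Your ``cleanest'' transitivity argument is incorrect, and it is the one you actually rely on. You assert that $G$ acting transitively on $S^{n-1}$ forces $G$ to surject onto $SO(n)$ up to finite index, so that its Zariski closure contains $SO(n,\C)$ and hence $\GC\supset SO(n,\C)$. This is false: the compact group $G_2\subset SO(7)$ acts transitively on $S^6$ but has dimension $14$, not $21$, and is already Zariski closed in $SO(7,\C)$. This is precisely the case in which the proposition is used in this paper ($W$ the imaginary octonions, $n=7$, $G=\Aut(\Oct)\cong G_2$, $\GC\cong G_2^{\C}$). Your conclusion would give that the isotropy group of $\GC$ at a generic point of $W_\C$ is $SO(6,\C)$, of complex dimension $15$, contradicting Proposition~\ref{gen-SL3}, which identifies this isotropy with $SL_3(\C)$, of complex dimension $8$.

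The gap is not merely one of rigor: the conclusion genuinely does not follow from the hypotheses as you have used them. Consider $G=Sp(1)$ (unit quaternions) acting on $W=\H\cong\R^4$ by left multiplication; this is orthogonal and transitive on $S^3$. Then $\GC\cong SL_2(\C)$ acting by left multiplication on $\H\otimes_\R\C\cong M_2(\C)$, and $B(v,v)$ is the determinant form. Here $H_0\setminus\{0\}$ is the set of rank-one matrices, which splits into infinitely many two-dimensional $\GC$-orbits indexed by $\P^1(\C)$ (the orbit of $vw^T$ is $\{v'w^T:v'\ne 0\}$); similarly $U(n)$ on $\R^{2n}$ gives $\GC\cong GL_n(\C)$ with three orbits in $H_0\setminus\{0\}$. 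Your first argument (open orbit $\Rightarrow$ dense, ``once one checks the orbit is also closed'') leaves exactly the step that fails in these examples unverified, and no such check can succeed purely from transitivity on the sphere. A correct proof must therefore use more specific structural input about the groups that actually occur (here $SO(3)$ and $G_2$), for instance by controlling the possible isotropy subgroups directly rather than via generalities about transitive sphere actions.
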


  We also need some corollaries of this proposition,
    likewise proved in \cite{BWH}..

\begin{corollary}\label{eq-iso-dim} ($=$Corollary 12.2 of \cite{BWH}).
  Let $A$ be a finite-dimensional $\R$-algebra with $\R$ as center.
  Let $A=\R\oplus W$ as vector space and let $G$ be a real Lie group
  acting trivially on $\R$ and by orthogonal linear transformations on $W$.
  Assume that $G$ acts transitively on the unit sphere of $W$.

  Let $\GC$ be the smallest complex Lie subgroup of
  $GL(A\tensor_\R\C)$ containing $G$.

  Then all the $\GC$-orbits in
  $\left(W\tensor_\R\C\right)\setminus\{0\}$
  are complex hypersurfaces. In particular, they all
  have the same dimension.
\end{corollary}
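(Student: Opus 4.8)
The plan is to reduce Corollary~\ref{eq-iso-dim} directly to Proposition~\ref{VC-orbits}. First I would observe that the complex Lie group $\GC\subset GL(A\tensor_\R\C)$ appearing here acts on $A\tensor_\R\C=\C\oplus(W\tensor_\R\C)$, and since $G$ acts trivially on $\R$ and linearly on $W$, the same holds for the complexified action: $\GC$ fixes $\C=\R\tensor_\R\C$ pointwise and stabilizes $W_\C=W\tensor_\R\C$, acting there through a subgroup of $GL(W_\C)$. Moreover this image subgroup is exactly the smallest complex Lie subgroup of $GL(W_\C)$ containing the image of $G$ in $GL(W)$, because taking the smallest complex subgroup commutes with restriction to an invariant linear subspace. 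Thus the $\GC$-orbits in $W_\C\setminus\{0\}$ coincide with the orbits of the group ``$\GC$'' of Proposition~\ref{VC-orbits}, applied with $W=\R^n$ ($n=\dimr W$) and $G$ acting orthogonally and transitively on the unit sphere (the hypothesis ``$G$ acts transitively on the unit sphere of $W$'' is precisely what Proposition~\ref{VC-orbits} needs).

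Next I would invoke Proposition~\ref{VC-orbits} to list the $\GC$-orbits in $W_\C\setminus\{0\}$: they are the level sets $H_\lambda=\{v\in W_\C:B(v,v)=\lambda\}$ for $\lambda\in\C^*$, together with $H_0=\{v:B(v,v)=0\}\setminus\{0\}$, where $B$ is the $\C$-bilinear extension of the euclidean form. Each $H_\lambda$ with $\lambda\ne 0$ is a smooth affine quadric hypersurface in $W_\C\cong\C^n$: the differential of $q(v)=B(v,v)$ is $2B(v,\cdot)$, which vanishes only at $v=0$ (since $B$ is nondegenerate), and $0\notin H_\lambda$, so $H_\lambda$ is a complex submanifold of pure codimension $1$, i.e.\ of dimension $n-1$. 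The cone $H_0$ is the zero locus of $q$ with its vertex removed; again $dq$ is nonzero everywhere on $H_0=\{q=0\}\setminus\{0\}$, so $H_0$ is a smooth complex hypersurface of dimension $n-1$ as well. Hence every $\GC$-orbit in $W_\C\setminus\{0\}$ is a complex hypersurface, and in particular they all have complex dimension $n-1$, proving the corollary.

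I do not expect a serious obstacle here; this is a bookkeeping reduction. The one point requiring a line of care is the claim that ``the smallest complex Lie subgroup of $GL(A_\C)$ containing $G$'', when restricted to the invariant subspace $W_\C$, is the ``smallest complex Lie subgroup of $GL(W_\C)$ containing $G|_W$'' — i.e.\ that minimality is preserved under this restriction homomorphism. This follows because the restriction map $r:GL(A_\C)_{W_\C}\to GL(W_\C)$ (defined on the subgroup stabilizing the decomposition and acting trivially on $\C$) is a morphism of complex Lie groups, so $r(\GC)$ is a complex Lie subgroup of $GL(W_\C)$ containing $r(G)=G|_W$; conversely the preimage under $r$ of any complex subgroup containing $G|_W$ is a complex subgroup containing $G$, forcing $r(\GC)$ to be the minimal one. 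A second, even smaller, point is noting that $G|_W$ does act transitively on the unit sphere of $W$: this is exactly the standing hypothesis of the corollary, so no work is needed. With these remarks in place the statement follows immediately from Proposition~\ref{VC-orbits}.
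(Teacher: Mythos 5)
Your proof is correct and takes essentially the same route the paper intends: the corollary is meant to fall out directly from Proposition~\ref{VC-orbits}, with the only work being (a) matching the two slightly different settings ($\GC$ defined inside $GL(A\tensor_\R\C)$ in the corollary versus inside $GL(W_\C)$ in the proposition) and (b) observing that the orbits listed in Proposition~\ref{VC-orbits}, namely the quadrics $H_\lambda$ and the punctured null cone $H_0$, are smooth complex hypersurfaces because $dq_v=2B(v,\cdot)$ vanishes only at $v=0$. You do both cleanly. The paper itself leaves the reduction implicit, so your explicit verification that ``smallest complex subgroup'' is preserved under the restriction morphism $r$ onto $GL(W_\C)$ is a welcome clarification rather than a detour; the key observations that $r$ is a complex Lie group morphism and that preimages under $r$ of complex subgroups containing $G|_W$ are complex subgroups containing $G$ are exactly what is needed.
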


\begin{corollary}\label{dim-c-isotropy} ($=$Corollary 12.3 of \cite{BWH}).
  Under the assumptions of Corollary~\ref{eq-iso-dim}
  for every point $p\in\left(W\tensor_\R\C\right)\setminus\{0\}$
  the isotropy group $I$ of the $\GC$-action at $p$ satisfies
  \[
  \dim_\C I+\dim_\R W-1=\dim_\C(\GC).
  \]
\end{corollary}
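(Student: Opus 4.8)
The plan is to deduce Corollary~\ref{dim-c-isotropy} directly from Corollary~\ref{eq-iso-dim} by the orbit–stabilizer dimension count. Fix $p\in\left(W\tensor_\R\C\right)\setminus\{0\}$ and let $\O_p=\GC\cdot p$ be its $\GC$-orbit and $I=\left(\GC\right)_p$ its isotropy group. Since $\GC$ is a complex Lie group acting holomorphically on the complex manifold $W\tensor_\R\C$, the orbit map $\GC\to\O_p$, $g\mapsto g(p)$, induces a holomorphic bijection $\GC/I\to\O_p$, so $\dim_\C\GC=\dim_\C I+\dim_\C\O_p$.

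First I would record that $\dim_\C\left(W\tensor_\R\C\right)=\dim_\R W$, because complexification doubles the real dimension but the complex dimension of $W\tensor_\R\C$ equals the real dimension of $W$. Next, by Corollary~\ref{eq-iso-dim}, the orbit $\O_p$ is a complex hypersurface in $W\tensor_\R\C$, hence $\dim_\C\O_p=\dim_\C\left(W\tensor_\R\C\right)-1=\dim_\R W-1$. Substituting into the orbit–stabilizer relation gives
\[
\dim_\C\GC=\dim_\C I+\dim_\R W-1,
\]
which is exactly the claimed identity after rearranging.

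The only point that needs a word of care — and I expect it to be the main (minor) obstacle — is justifying that $\dim_\C\O_p=\dim_\C\GC-\dim_\C I$ as a genuine complex-analytic statement rather than merely a real-dimension count. This is standard: for a holomorphic action of a complex Lie group, every orbit is a locally closed complex submanifold and the isotropy $I$ is a closed complex Lie subgroup, so $\GC/I$ carries a natural complex structure and the orbit map $\GC/I\to\O_p$ is a biholomorphism onto the orbit; hence $\dim_\C\O_p=\dim_\C(\GC/I)=\dim_\C\GC-\dim_\C I$. Once this is in place the corollary is immediate, and there is nothing further to verify.
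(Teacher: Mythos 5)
Your argument is correct and is essentially the same as the paper's: orbit–stabilizer dimension count, Corollary~\ref{eq-iso-dim} to identify the orbit as a complex hypersurface, and the identity $\dim_\C(W\tensor_\R\C)=\dim_\R W$. The extra paragraph justifying the complex-analytic form of the orbit–stabilizer relation is sound but not something the paper spells out.
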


\begin{corollary}\label{omegaconj}  ($=$Corollary 12.4 of \cite{BWH}).
  Under the same assumptions, there is a Zariski open subset
  \[
  \Offen\subset A\tensor_\R\C\stackrel{\zeta}
  \sim\C\oplus \left(W\tensor_\R\C\right)
  \]
  defined as $\Offen=\{q:\zeta(q)=(x,v), B(v,v)\ne 0\}$
  such that all the isotropy groups
  of $\GC$ at points in $\Offen$ are conjugate.
\end{corollary}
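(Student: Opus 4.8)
\section*{Proof proposal}

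The plan is to reduce everything to the description of the $\GC$-orbits on $W\tensor_\R\C$ given in Proposition~\ref{VC-orbits}, and then to exploit the trivial but decisive fact that the stabilizer of a point under a \emph{linear} group action does not change when the point is rescaled.

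First I would check that $\GC$ respects the splitting $A\tensor_\R\C\cong\C\oplus(W\tensor_\R\C)$ and acts trivially on the first summand. Indeed, $G$ acts trivially on $\R$ and linearly on $W$, so after complexification $G$ is contained in the image of the algebraic embedding $GL(W\tensor_\R\C)\hookrightarrow GL(A\tensor_\R\C)$, $D\mapsto\mathrm{id}_{\C}\oplus D$. That image is a closed complex Lie subgroup of $GL(A\tensor_\R\C)$, hence it contains $\GC$. In particular, if $\zeta(q)=(x,v)$ with $v\in W\tensor_\R\C$, then the $\GC$-isotropy at $q$ coincides with the $\GC$-isotropy at $v$ for the action on $W\tensor_\R\C$; so it suffices to prove that the isotropy groups at all $v$ with $B(v,v)\ne0$ are conjugate in $\GC$.

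Next I would fix $v_0\in W$ with $||v_0||=1$ (so $B(v_0,v_0)=1$) and put $H=(\GC)_{v_0}$. Since every $g\in\GC\subset GL(W\tensor_\R\C)$ is $\C$-linear, one has $(\GC)_{cv}=(\GC)_v$ for all $v$ and all $c\in\C^*$; as $B(cv_0,cv_0)=c^2$ and $c\mapsto c^2$ is surjective onto $\C^*$, each level set $H_\mu=\{v:B(v,v)=\mu\}$ with $\mu\in\C^*$ contains a point $cv_0$ (any $c$ with $c^2=\mu$) satisfying $(\GC)_{cv_0}=H$. By Proposition~\ref{VC-orbits} each such $H_\mu$ is a single $\GC$-orbit, so any $w$ with $B(w,w)=\mu\ne0$ can be written $w=g(cv_0)$ with $g\in\GC$ and $c^2=\mu$, whence $(\GC)_w=g\,H\,g^{-1}$. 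Thus every isotropy group at a point of $\Offen$ is conjugate to $H$. Finally, $\Offen$ is Zariski open since $v\mapsto B(v,v)$ is polynomial, and it is nonempty since it contains the point with $\zeta$-coordinates $(0,v_0)$.

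I do not anticipate a genuine obstacle here: once Proposition~\ref{VC-orbits} is in hand, the argument is essentially formal. The only points that demand a little care are the reduction to the action on $W$ (making the $\C$-coordinate irrelevant for the isotropy) and the rescaling identity $(\GC)_{cv}=(\GC)_v$, which is exactly what bridges the \emph{distinct} generic orbits $H_\mu$; without it one would be stuck, since for $\mu\ne\mu'$ the sets $H_\mu$ and $H_{\mu'}$ are disjoint $\GC$-orbits and there is no a priori reason for their stabilizers to be conjugate.
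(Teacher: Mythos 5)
Your proposal is correct and follows essentially the same route as the paper: both invoke Proposition~\ref{VC-orbits} for the orbit structure, use that $\GC$ acts trivially on the $\C$-summand, and bridge the distinct orbits $H_\mu$ by the rescaling identity $(\GC)_{cv}=(\GC)_v$ coming from $\C$-linearity. The only cosmetic difference is that you first reduce cleanly to the $\GC$-action on $W\tensor_\R\C$ before rescaling, while the paper carries the $\C$-coordinate along and rescales the full element $r+v$; the underlying ideas coincide.
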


\begin{corollary}\label{aut-if-n-tr}
  Let $p,q\in\Octc\setminus\C$.
  Then the following properties are
  equivalent:
  \begin{enumerate}
  \item
    $\Tr(p)=\Tr(q)$ and $\Nm(p)=\Nm(q)$.
  \item
    There is an automorphism $\phi\in \Aut(\Octc)$ such that
    $\phi(p)=q$.
  \end{enumerate}
\end{corollary}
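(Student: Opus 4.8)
The plan is to mirror the proof of Corollary~\ref{n-tr-auto}, using the orbit structure of $\GC=\Aut(\Octc)$ on the complexified imaginary subspace $W_\C$ (Proposition~\ref{VC-orbits}) in place of the simple transitivity of $\Aut(\Oct)$ on basic triples. First I would use the vector space decomposition $\Octc=C_\C\oplus W_\C$ to write $p=r+v$ and $q=s+w$ with $r,s$ in the center $C_\C=\C\cdot 1$ and $v,w\in W_\C$, and record the elementary identities
\[
\Tr(p)=2r,\qquad \Nm(p)=r^2-v^2=r^2+B(v,v),
\]
and likewise for $q$; these hold because $C_\C$ is central, octonionic conjugation is $+\mathrm{id}$ on $C_\C$ and $-\mathrm{id}$ on $W_\C$, and $v^2=-\Nm(v)=-B(v,v)$ for $v\in W_\C$. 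Since $p,q\notin\C$ we have $v\neq 0\neq w$.

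For $(i)\Rightarrow(ii)$ I would compare invariants: $\Tr(p)=\Tr(q)$ forces $r=s$, and then $\Nm(p)=\Nm(q)$ forces $B(v,v)=B(w,w)=:\lambda$. Now apply Proposition~\ref{VC-orbits} with $A=\Oct$, $W=\R^7$ and $G=\Aut(\Oct)$ — which acts orthogonally on $W$ with the unit sphere $S^6$ as a single orbit (e.g.\ by Theorem~\ref{simple-triple}) and whose complexification in $GL(W_\C)$ is $\GC=\Aut(\Octc)$: it gives that $H_\lambda$ (if $\lambda\neq 0$), resp.\ $H_0=\{x\in W_\C:B(x,x)=0\}\setminus\{0\}$ (if $\lambda=0$), is a single $\GC$-orbit. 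In either case $v$ and $w$ lie in the same $\GC$-orbit, so there is $\phi\in\GC=\Aut(\Octc)$ with $\phi(v)=w$; since every algebra automorphism of $\Octc$ fixes the center pointwise, $\phi(p)=\phi(r)+\phi(v)=r+w=q$.

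For $(ii)\Rightarrow(i)$, given $\phi\in\Aut(\Octc)$ with $\phi(p)=q$, I would first observe that $\phi$ preserves $W_\C$: indeed $W_\C\cup C_\C=\{x\in\Octc:x^2\in C_\C\}$, this set is preserved by $\phi$, the center $C_\C$ is preserved by both $\phi$ and $\phi^{-1}$, and $W_\C\cap C_\C=\{0\}$. Hence $\phi$ preserves both eigenspaces of octonionic conjugation and so commutes with it, $\phi(\bar x)=\overline{\phi(x)}$. Since $\Tr(p),\Nm(p)\in C_\C$ are fixed by $\phi$, this yields $\Tr(q)=\phi(p)+\overline{\phi(p)}=\phi(p+\bar p)=\Tr(p)$ and $\Nm(q)=\phi(p)\,\overline{\phi(p)}=\phi(p\bar p)=\Nm(p)$.

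The only point needing care is the hypothesis feeding Proposition~\ref{VC-orbits}: one must know that the image of $\Aut(\Octc)$ in $GL(W_\C)$ is exactly the smallest complex Lie subgroup containing $\Aut(\Oct)$, i.e.\ that $\Aut(\Octc)$ is precisely the complexification of the compact group of type $G_2$ from Theorem~\ref{aut-g2}, with no extra components or dimensions. This is classical and is already built into the paper's identification $\GC=\Aut(\Octc)$, so I would simply invoke it. Everything else reduces to the short computation above; in particular the null case $\lambda=0$ — which has no counterpart in the real statement, where $B(v,v)=\|v\|^2=0$ forces $v=0$ — causes no trouble, since Proposition~\ref{VC-orbits} already identifies the punctured null cone as a single orbit.
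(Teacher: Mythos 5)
Your proof is correct and follows essentially the same route as the paper: decompose $\Octc=\C\oplus W_\C$, identify $\Nm$ with the quadratic form $B$, and invoke Proposition~\ref{VC-orbits} for the orbit structure, with the triviality of the $\Aut(\Octc)$-action on the center giving the converse. The extra detail you supply (showing $\phi$ preserves $W_\C$ and hence commutes with conjugation, and flagging the null cone case $\lambda=0$) merely elaborates points the paper states implicitly.
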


Here we need a proof, since in \cite{BWH} we covered only
the quaternionic case.

\begin{proof}
  By construction, we have $\Nm(x)=x(\bar x)=B(x,x)$ for all $x\in\Octc$
  and $x\mapsto \frac 12\Tr(x)$ equals the projection of $x$ to $\C$
  with respect to the d
  irect sum decomposition $\Octc=\C\oplus W_{\C}$.

  Let $p=p'+p''$, $q=q'+q''$ with $p',q'\in\C$, $p'',q''\in W_{\C}$.
  Since $\Aut(\Octc)$ acts trivially on $\C$ and by linear,
  $B$-preserving transformations on $W_{\C}$, $(ii)$ implies that
  $p'=q'$ and $B(p'',p'')=B(q'',q'')$ which in turn implies
  $\Tr(p)=\Tr(q)$, $\Nm(p)=\Nm(q)$.

  Conversely,
  \[
  (  \Tr(p)=\Tr(q)) \land ( \Nm(p)=\Nm(q)))
  \implies
  (  p'=q') \land\left( B(p'',p'')=B(q'',q'')\right)
  \]
  and the latter condition implies
  $\exists\phi\in \Aut(\Octc):\phi(p)=q$ due to
  Proposition~\ref{VC-orbits}.
\end{proof}

\section{Automorphisms of $\Oct\tensor_\R\C$: Isotropy Groups}

  As a $\R$-vector space, $\Oct$ admits a direct sum decomposition
  $\Oct=C\oplus V$, where $C$ is the (real one-dimensional center),
  and $V=\{q\in\Oct: Tr(q)=0\}$.
  The euclidean scalar product on $V$ resp.~$\Oct$ extends to
  a complex bilinear form on $\WC$ resp.~$\Oct\tensor_\R\C$ which we
  denote by $B(\ ,\ )$.

  In the sequel, let $G$ denote the automorphism group
  of the $\R$-algebra $\Oct$ of octonions and let $\Gc$
  denote the automorphism group of the $\C$-algebra $\Octc$.

  \begin{proposition}\label{gen-SL3}
    For every $x\in \WC$ with $B(x,x)\ne 0$
    the isotropy group
    $I$ of $\GC$ at $x$ is isomorphic to $SL_3(\C)$.
  \end{proposition}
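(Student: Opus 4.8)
The plan is to realize $I$ as the complexification of the real isotropy group computed in Corollary~\ref{oct-iso}, and then identify that complexification. First I would observe that, by Corollary~\ref{dim-c-isotropy}, the complex dimension of $I$ is forced: since $\dim_{\R}W=7$ and $\dim_{\C}(\GC)=14$ (the complex group of type $G_2$), we get $\dim_{\C}I = 14 - 7 + 1 = 8$. This already matches $\dim_{\C}SL_3(\C)=8$, so the dimension count is on our side and the task reduces to exhibiting the right group, not to a dimension argument at the end.

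Next I would reduce to a convenient base point. By Corollary~\ref{omegaconj}, all isotropy groups of $\GC$ at points of the Zariski-open set $\{q=x+v:B(v,v)\ne 0\}$ are conjugate, so it suffices to treat one well-chosen $x$; since $\GC$ acts trivially on the center $\C$, I may take $x\in\WC$ with $B(x,x)\ne0$, and after rescaling (the action is linear) I may assume $B(x,x)=-1$, i.e. $x$ lies in the $\GC$-orbit of $e_1\in\SOct\subset W$ where $(e_1,e_2,e_3)$ is a basic triple. Thus the isotropy $I$ of $\GC$ at $x$ is conjugate to the isotropy $I_{e_1}$ of $\GC$ at $e_1$. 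Now $G=\Aut(\Oct)$ has isotropy $SU_3(\C)$ at $e_1$ by Corollary~\ref{oct-iso}, and $\GC$ is by definition the smallest complex Lie subgroup of $GL_{\C}(\OC)$ containing $G$. The key claim is that the isotropy of $\GC$ at $e_1$ is the smallest complex Lie subgroup of $GL_{\C}(P_{\C})$ containing $SU_3(\C)$, namely $SL_3(\C)$ — here $P$ is the orthogonal complement of $\langle 1,e_1\rangle$ in $\Oct$, carrying the complex structure $J:y\mapsto e_1\cdot y$, and $\GC$ preserves the decomposition $\Octc = \langle 1,e_1\rangle_{\C}\oplus P_{\C}$.

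The main obstacle is justifying that complexifying commutes with passing to the isotropy, i.e. that $(\GC)_{e_1}$ really is the complexification $(SU_3(\C))_{\C}=SL_3(\C)$ rather than something larger. I would argue as follows. The Lie algebra $\mathfrak{g}_{\C}=\Lie(\GC)$ is the complexification of $\mathfrak{g}=\Lie(\Aut\Oct)$, and the isotropy subalgebra at $e_1$ is $\{\xi\in\mathfrak{g}_{\C}:\xi\cdot e_1=0\}$, which is the complexification of $\{\xi\in\mathfrak{g}:\xi\cdot e_1=0\}=\Lie(SU_3(\C))=\mathfrak{su}_3$; hence $\Lie(I)\cong\mathfrak{su}_3\otimes_{\R}\C\cong\mathfrak{sl}_3(\C)$, confirming $\dim_{\C}I=8$. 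Since $\GC$ is a simple group of type $G_2$ and $\mathfrak{sl}_3(\C)$ is a maximal (reductive, hence regular) subalgebra of $\mathfrak{g}_2$ — indeed $SL_3(\C)$ is a well-known maximal connected subgroup of the $G_2$ group — the connected subgroup $I^{\circ}$ with this Lie algebra is $SL_3(\C)$. Finally I would check there is no disconnectedness issue: $SU_3(\C)\subset I$ is already connected and meets every component one could worry about, and since $SL_3(\C)$ is connected and self-normalizing to the extent needed (it is maximal connected in a connected group), $I=I^{\circ}=SL_3(\C)$. I would close by invoking Corollary~\ref{omegaconj} once more to transport this identification to an arbitrary $x\in\WC$ with $B(x,x)\ne0$, completing the proof.
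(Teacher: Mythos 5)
Your proposal shares several ingredients with the paper's proof (the reduction to a convenient base point, the group $SU_3(\C)$ from Corollary~\ref{oct-iso}, and a dimension count giving $\dim_{\C}I=8$), but diverges precisely at the two steps that pin down $I$ globally, and one of those steps has a genuine gap. The Lie algebra computation $\Lie(I)=\mathfrak{su}_3\otimes_{\R}\C\cong\mathfrak{sl}_3(\C)$ is correct. However, identifying the group from its Lie algebra requires determining both which global form ($SL_3(\C)$ versus a quotient) occurs, and whether $I$ is connected. For the first point you invoke the external fact that the relevant connected subgroup of the complex $G_2$ group is $SL_3(\C)$; the paper instead \emph{proves} this by showing $I$ is simply-connected (a connected complex Lie group is homotopy equivalent to its maximal compact, which here is $SU_3(\C)$, itself simply-connected).

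The genuine gap is in your connectedness step. The phrase ``self-normalizing to the extent needed'' is not correct as a claim about $SL_3(\C)$ inside $G_2$: the normalizer $N_{\GC}(SL_3(\C))$ of the long-root $A_2$ subgroup in the complex $G_2$ group has two connected components, the non-identity component realizing the outer automorphism of $SL_3(\C)$. So maximality of $SL_3(\C)$ as a \emph{connected} subgroup only shows $I\subset N_{\GC}(I^{\circ})$, which leaves open the possibility that $I$ is the full two-component normalizer. To rule this out one must check that the non-identity component does not fix $e_1$ (it does not: a representative swaps the two standard $SL_3$-submodules $V$ and $V^{*}$ of $W_{\C}$ and, by a determinant argument in $SO(7,\C)$, acts by $-1$ on the one-dimensional $SL_3$-fixed line spanned by $e_1$). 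The paper sidesteps all of this by proving connectedness of $I$ directly from the long exact homotopy sequence of the fibration $\GC\to\GC/I$, using that the $\GC$-orbit through $q$ is a smooth affine quadric and hence simply-connected, and $\GC$ itself is connected. Your proposal needs either that topological argument or the explicit computation on the fixed line to be complete.
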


  \begin{figure}  
\begin{tikzpicture}[scale=2]
  \fill [red] (0,1.732) circle [radius=0.073] ;
\fill [red] (-1.5,0.866) circle [radius=0.073] ;
\fill [red] (1.5,0.866) circle [radius=0.073] ;
\fill [red] (-1.5,-0.866) circle [radius=0.073] ;
\fill [red] (1.5,-0.866) circle [radius=0.073] ;
\fill [red] (0,-1.732) circle [radius=0.073] ;
\draw (0,0) -- (-0.5,0.866);
\fill (-0.5,0.866) circle [radius=0.03] ;
\draw (0,0) -- (0.5,0.866);
\fill (0.5,0.866) circle [radius=0.03] ;
\draw (0,0) -- (-1,0);
\fill (-1,0) circle [radius=0.03] ;
\draw (0,0) -- (1,0);
\fill (1,0) circle [radius=0.03] ;
\draw (0,0) -- (-0.5,-0.866);
\fill (-0.5,-0.866) circle [radius=0.03] ;
\draw (0,0) -- (0.5,-0.866);
\fill (0.5,-0.866) circle [radius=0.03] ;
\draw (0,0) -- (0,1.732);
\fill (0,1.732) circle [radius=0.02] ;
\draw (0,0) -- (-1.5,0.866);
\fill (-1.5,0.866) circle [radius=0.02] ;
\draw (0,0) -- (1.5,0.866);
\fill (1.5,0.866) circle [radius=0.02] ;
\draw (0,0) -- (-1.5,-0.866);
\fill (-1.5,-0.866) circle [radius=0.02] ;
\draw (0,0) -- (1.5,-0.866);
\fill (1.5,-0.866) circle [radius=0.02] ;
\draw (0,0) -- (0,-1.732);
\fill (0,-1.732) circle [radius=0.02] ;
\end{tikzpicture}
\caption{Root system of $G_2$. {\color{red}Red: }
  Subsystem of type $A_2$ ($SL_3$) }
\end{figure}
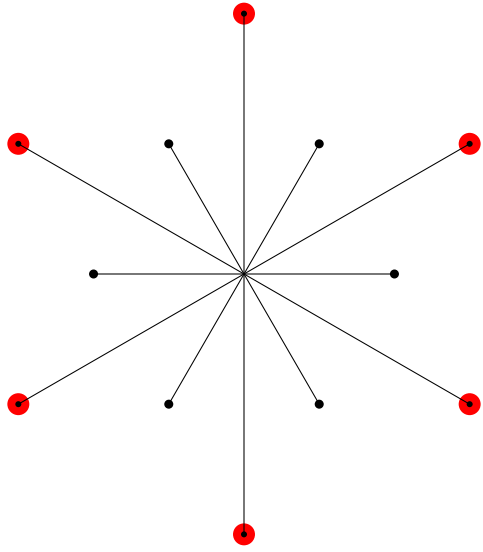

  \begin{proof}
    Let $c=B(x,x)$. Choose $\lambda\in\C$ and $y\in V\setminus\{0\}$
    such that
    $c=\lambda^2||y||^2$.
    Let $\lambda=\alpha+i\beta$, $\alpha,\beta\in \R$.
    Define $q=\lambda y$. Then $B(q,q)=B(x,x)$. Therefore
    $x$ and $q$ lie in the same $\GC$-orbit and consequently the
    isotropy groups at $q$ and $x$ are conjugate.

    We consider the isotropy group of the $G$-action at $q$.
    Since $q=\alpha y+ i\beta y$, the real part $\alpha y$
    and the imaginary part $\beta y$ of
    $q$ are $\R$-linearly dependent elements of $\Oct$.
    It follows that the isotropy of $G$
    at $q$ agrees with the isotropy of $G$ at $y$.

    The isotropy of $G$ at $y$ is isomorphic to $SU_3(\C)$
    (Corollary~\ref{oct-iso}).
    The isotropy $I$ of $\GC$ at $y$ is a complex Lie subgroup of $\GC$
    containing  the isotropy of $G$ at $q$. 

    The $\GC$-orbit through $q$ is a smooth affine quadric
    and therefore simply-connected.
    The fibration $\GC\to\GC/I$ induces
      the long exact homotopy sequence
     \[
     \ldots\to\underbrace{\pi_1(\GC/I)}_{=1}\to
     \pi_0(I)\to \underbrace{\pi_0(\GC)}_{=1}\to\ldots
     \]
    which yields that $I$ is connected.

    We have $\dimc(I)=8$, because the $\GC$-orbit is $6$-dimensional
    and $\dimc(\GC)=14$.

    Thus $\dimc(I)=\dimr(G\cap I)$ and $G\cap I$ is a
    {\em ``real form''} of $I$. Since $G\cap I\simeq SU_3(\C)$
    which is compact,
    $G\cap I$ is actually a maximal compact subgroup of $I$.    
    Every connected Lie group is homotopic to its maximal
    compact subgroup.
    Since $SU_3(\C)$ is simply-connected,
      it follows that $I$ is simply-connected.
      Hence $I$ is a simply-connected complex Lie group
      with a real form isomorphic
      to $SU_3(\C)$.
    This implies 
    $I\simeq SL_3(\C)$.
    
    \end{proof}
  \begin{remark}
    The figure shows in terms of the root system how $SL_3(\C)$
    occurs as a subgroup
    of the simple complex Lie group of type $G_2$.
    \footnote{For general information on semisimple Lie groups
      and root systems see e.g. \cite{MR1920389} or \cite{Humph}}
  \end{remark}

  \begin{corollary}\label{cor-iso}
    Let $U$ denote the
    Zariski open subset of $\OC=\C\oplus W_\C$
    defined as
    \[
    U=\{ (t,q)\in\C\oplus W_\C: B(q,q)\ne 0\}.
    \]
    
    For every $x,y\in U$ the isotropy groups of $\GC$ at $x$ and $y$
    are conjugate and isomorphic to $SL_3(\C)$.
  \end{corollary}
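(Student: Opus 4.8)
The plan is to reduce Corollary~\ref{cor-iso} to Proposition~\ref{gen-SL3} by first disposing of the central component of a point in $U$. Fix $x=(t,q)\in U=\{(t,q)\in\C\oplus W_\C:B(q,q)\ne 0\}$. Since $\GC=\Aut(\Octc)$ acts trivially on the center $\C$ of $\Octc$, the isotropy group of $\GC$ at $x=(t,q)$ coincides with the isotropy group at $(0,q)$; indeed an automorphism $\phi$ satisfies $\phi(t+q)=t+q$ if and only if $\phi(t)+\phi(q)=t+q$, and $\phi(t)=t$ forces $\phi(q)=q$. Thus the isotropy at $x$ equals the isotropy at the purely imaginary (complexified) element $q\in W_\C$, and by hypothesis $B(q,q)\ne 0$, so Proposition~\ref{gen-SL3} applies directly and gives that this isotropy group is isomorphic to $SL_3(\C)$.

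For the conjugacy statement, given a second point $y=(s,w)\in U$, the same reduction shows that the isotropy at $y$ equals the isotropy at $w\in W_\C$ with $B(w,w)\ne 0$. Now I would invoke the orbit structure from Proposition~\ref{VC-orbits}: the nonzero level sets $H_\lambda=\{v\in W_\C:B(v,v)=\lambda\}$ ($\lambda\in\C^*$) are single $\GC$-orbits. If $B(q,q)=B(w,w)$, then $q$ and $w$ lie in the same orbit $H_\lambda$ and so their isotropy groups are conjugate in $\GC$. If $B(q,q)=\mu\ne\lambda=B(w,w)$ are different nonzero scalars, pick $c\in\C^*$ with $c^2\mu=\lambda$; since the action on $W_\C$ is linear, the isotropy at $q$ equals the isotropy at $cq$, and $B(cq,cq)=c^2\mu=\lambda=B(w,w)$, so $cq$ and $w$ lie in the same orbit $H_\lambda$, whence their isotropy groups are conjugate. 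In all cases the isotropy groups at $x$ and $y$ are conjugate, and being conjugate to $SL_3(\C)$ they are all isomorphic to $SL_3(\C)$.

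There is no real obstacle here: the corollary is essentially a bookkeeping consequence of Proposition~\ref{gen-SL3} together with the orbit classification of Proposition~\ref{VC-orbits}. The only points requiring a word of care are (a) noting that $\GC$ fixes the center $\C$ pointwise, so that the first coordinate $t$ is irrelevant to the isotropy computation, and (b) using linearity of the action to rescale within $W_\C$ so that two points with different nonzero values of $B(\cdot,\cdot)$ can be brought into a common orbit. Both are immediate from the setup in the preceding two sections, so the proof is short. Alternatively, one could phrase the conjugacy directly via Corollary~\ref{omegaconj}, which already asserts that all isotropy groups of $\GC$ at points of the Zariski open set $\Offen=\{r+v:B(v,v)\ne0\}$ are conjugate; combining that with Proposition~\ref{gen-SL3} gives the result with essentially no further work.
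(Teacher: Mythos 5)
Your proof is correct and essentially the same as the paper's, which simply cites Proposition~\ref{gen-SL3} together with Corollary~\ref{omegaconj}. Your primary argument re-derives Corollary~\ref{omegaconj} (via the observation that $\GC$ fixes the center pointwise, Proposition~\ref{VC-orbits}, and a rescaling by $c\in\C^*$) rather than citing it, which is fine but redundant; as you note at the end, invoking Corollary~\ref{omegaconj} directly together with Proposition~\ref{gen-SL3} finishes the argument immediately.
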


  \begin{proof}
    Follows from the above Proposition~\ref{gen-SL3}
    in combination with
    Corollary~\ref{omegaconj}.
  \end{proof}
  
  \begin{proposition}\label{spez-iso}
    For every $q\in \left(\WC\right)\setminus\{0\}$ with $B(q,q)= 0$
    the isotropy group
    $I$ of $\GC$ at $q$ is
    a semidirect product $I=S\ltimes U$
    where $S\simeq SL_2(\C)$ and
    $U$ is a $5$-dimensional unipotent normal subgroup of $I$.
  \end{proposition}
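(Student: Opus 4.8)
The plan is to analyze the isotropy group of the $\GC$-action on the null cone of $\WC$ by relating it to a known subgroup structure of $G_2(\C)$. First I would fix $q \in (\WC) \setminus \{0\}$ with $B(q,q) = 0$ and write $q = u + \iota v$ with $u, v \in W$. The condition $B(q,q) = 0$ together with the fact that $B$ restricted to $W$ is the (positive-definite, hence real) euclidean form forces $\|u\|^2 = \|v\|^2$ and $\langle u, v\rangle = 0$; that is, $u$ and $v$ span a real $2$-plane on which $B$ is a (real multiple of) the standard form, and $q$ is an isotropic vector for the complexified form. Since $\GC$ acts transitively on such null vectors up to scaling (this follows from Proposition~\ref{VC-orbits}, where $H_0$ is a single orbit), and the isotropy group is unchanged under scaling because the action is linear and $\GC$ fixes the center, I may normalize $q$ to a convenient representative. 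Here it is natural to take $u, v$ to lie inside a quaternionic subalgebra of $\Oct$, so that $q$ becomes a standard null vector and I can read off the stabilizer using the root-system picture of $G_2$.

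Next I would identify the stabilizer explicitly. The stabilizer in $G_2(\C)$ of a point on the null cone of the $7$-dimensional representation $\WC$ is a classical object: it is a maximal parabolic subgroup's ``opposite-unipotent times Levi'' type structure, or more precisely, the stabilizer of an isotropic line is a maximal parabolic $P$ with Levi factor $GL_2(\C)$, and the stabilizer of the vector itself (not just the line) is the kernel of a character, giving $SL_2(\C) \ltimes U$ with $\dim U = 5$. To see the dimension count: the $\GC$-orbit through $q$ is $H_0$, which by Proposition~\ref{VC-orbits} is $\{v \in \WC : B(v,v) = 0\} \setminus \{0\}$, an affine quadric cone in $\C^7$ minus the origin, of complex dimension $6$. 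Since $\dimc \GC = 14$, the isotropy group $I$ has $\dimc I = 14 - 6 = 8$. Decomposing $8 = 3 + 5$ matches $SL_2(\C) \ltimes U$ with $U$ of dimension $5$. To actually produce the $SL_2(\C)$ and the unipotent radical, I would use the subalgebra structure: choosing the quaternionic subalgebra $\H_{I,J}$ of $\Oct$ containing $u$ and $v$, the subgroup of $\GC$ preserving this subalgebra and the complementary $4$-space maps to $\GC$ with image containing an $SL_2(\C)$ that acts on the null plane; the remaining directions of $W$ (those outside the span of $1, u, v$, and products) furnish the unipotent part via the octonion multiplication (left multiplication by null elements being nilpotent).

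Concretely, for the unipotent part I would exhibit explicit one-parameter unipotent subgroups: for each suitable $w$ orthogonal to the relevant subspace, the octonionic structure lets one build an automorphism of $\Octc$ fixing $q$ and acting as a transvection, and one checks these generate a $5$-dimensional unipotent group normalized by the $SL_2(\C)$. The $SL_2(\C)$ itself arises as (the complexification of) the $SU_2$ inside $SU_3$ that stabilizes the extra complex line determined by $q$ once one fixes the complex structure coming from $u$ (as in the proof of Corollary~\ref{oct-iso}), but now with one fewer constraint because $q$ is null rather than having $B(q,q) \neq 0$. That $I$ is exactly the semidirect product $S \ltimes U$ with $S$ acting on $U$ and $U$ normal then follows from the Levi decomposition of $I$ together with the facts that $\dimc I = 8$, that $I$ is connected (by the same long exact homotopy sequence argument as in Proposition~\ref{gen-SL3}, since $H_0$ is simply-connected as a smooth affine quadric cone minus a point in dimension $\geq 2$), and that the reductive part of a subgroup of $G_2(\C)$ stabilizing a null vector is semisimple of rank $1$, hence $SL_2(\C)$ or $PGL_2(\C)$ — and simple connectedness of $I$ forces $SL_2(\C)$.

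The main obstacle I expect is making the unipotent radical $U$ and the semidirect product structure precise: while the dimension count $8 = 3 + 5$ and the identification of the reductive part as $SL_2(\C)$ are relatively clean, verifying that $U$ is $5$-dimensional, unipotent, normal, and complementary to $S$ requires either an explicit root-space computation in the Lie algebra $\mathfrak{g}_2$ (writing $q$ as a highest weight vector for the $7$-dimensional representation and reading off the annihilator subalgebra from the $G_2$ root diagram) or an explicit matrix/octonion computation with chosen bases. I would carry out the root-space version: the $7$-dimensional representation of $G_2$ has weights $0$ and the six short roots; the stabilizer of a highest weight vector is spanned by the Cartan, the positive root spaces, and one extra negative root space, and extracting the nilradical from this is a finite bookkeeping task using the $G_2$ Cartan matrix.
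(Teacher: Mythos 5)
Your approach is valid but takes a genuinely different route from the paper's. You propose to normalize $q$ to a highest weight vector of the $7$-dimensional representation and identify $I$ as the stabilizer inside the maximal parabolic $P\subset\GC$ fixing the null line $\C q$ (Levi factor $GL_2(\C)$, unipotent radical of dimension $5$), so that $I=\ker\chi$ for the character $\chi$ by which $P$ acts on $\C q$; this is standard algebraic-group material and does yield $I\simeq SL_2(\C)\ltimes U$ with $\dim U=5$. The paper argues instead by octonion arithmetic and compact-group theory: using Theorem~\ref{simple-triple} it exhibits the compact isotropy $G\cap I$ as a group acting simply transitively on the unit $3$-sphere in the orthogonal complement of $\langle x',y',x'\cdot y'\rangle$, hence homeomorphic to $S^3$ and so isomorphic to $SU_2(\C)$; it then invokes the conjugacy theorem for maximal compact subgroups to show every maximal compact of $I$ is conjugate to this $SU_2(\C)$, applies the Levi decomposition $I=S\ltimes U$, and concludes $S\simeq SL_2(\C)$ since $S$ is a complex reductive group with maximal compact $SU_2(\C)$. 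Thus the paper avoids all root-system bookkeeping at the cost of the Cartan conjugacy theorem, and obtains the answer without needing to discuss $\pi_1(H_0)$. Two small slips in your sketch: (1) the annihilator of a highest weight vector meets the Cartan subalgebra only in the hyperplane $\ker\lambda$, not the whole Cartan; (2) your claim that $H_0$ is simply connected ``in dimension $\geq 2$'' is off by one, since the null cone of $\C^3$ minus the origin retracts onto $V_2(\R^3)\simeq SO(3)$ with $\pi_1\cong\Z/2$; for $\WC\simeq\C^7$ the retract is the simply-connected Stiefel manifold $V_2(\R^7)$, so your connectedness argument (a point the paper leaves implicit) does go through in the case at hand.
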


  \begin{proof}
        Let $q=x + iy\in(\WC)\setminus\{0\}$ with $x,y\in W$
    and $B(q,q)=0$. Then
    \begin{align*}
      &0=B(q,q)=B(x+iy,x+iy)= B(x,x)-B(y,y)+2iB(x,y)\\
      \implies\ 
      &B(x,x)=B(y,y),\quad B(x,y)=0.\\
    \end{align*}

    Since $B$ is the standard scalar product
    on $W\simeq\R^7$, $x$ and $y$ are orthogonal in $W$
    (because $B(x,y)=0$) and of the
    same length $r>0$ (because $B(x,x)=B(y,y)$).

    Define $x'=\frac xr$, $y'=\frac yr$. Now $x'$ and $y'$ are orthonormal.
    The set of all $z$ for which $(x',y',z)$ is a basic triple
    (as defined in Definition~\ref{def-basic-triple}) is the set
    of all elements $z$ of length $1$ in the orthogonal complement
    of $P=\left<x',y',x' \cdot y'\right>$.
    Since the automorphisms group of $\Oct$
    acts simply transitively on the set of basic triples, it follows
    that there is a $1\!:\!1$-correspondence between the unit sphere in
    $P^ \perp$
    and the set of all automorphisms which preserve both $x'$ and $y'$.
    Now a linear endomorphism preserves $x'$ iff it preserves $x$ and similarily
    for $y$ and $y'$. Hence the set of automorphisms preserving both $x'$ and
    $y'$ is simply the isotropy group of $G$ at $q=x+iy$.
    
    It follows that the isotropy group of $G$ at $q$ is isomorphic
    to $SU_2(\C)$ (which is the only real Lie group homeomorphic to
    the $3$-sphere).

    We recall that $G$ is a maximal compact subgroup of $\GC$.
    Let $I$ denote the isotropy group of $\GC$ at $q$ and
    let $K$ be a maximal compact subgroup of $I$.
    Now $K$ is a compact subgroup of $\GC$ and
      therefore conjugate to a subgroup of any maximal
      compact subgroup of $\GC$
      (\cite{Borel1998}, VII, Theorem 1.2(i))
      Thus
    \[
    \exists g\in\GC: gKg^{-1}\subset G.
    \]
    Observe that
    \[
       gKg^{-1}\subset gIg^{-1}=\{h\in\GC:h(g(q))=g(q)\}.
    \]
    Now
    \[
    \{z\in\WC:B(z,z)=0,\ z\ne 0 \}
    \]
    is one $\GC$-orbit. Hence the above considerations
    (showing that the $G$-isotropy at $q$ is isomorphic to
    $SU_2(\C)$) apply likewise to $g(q)$. 
    Thus
    \[
    gKg^{-1}\subset gIg^{-1}\cap G\simeq SU_2(\C)
    \]
    which in turn implies
    \[
    K \, \subset\, I\cap g^{-1}Gg\, \simeq\, SU_2(\C).
    \]
    Recall that $K$ is {\em maximal compact} in $I$
    and that $SU_2(\C)$ is compact.
    Hence
    \[
    K = I\cap g^{-1}Gg \,\simeq\, SU_2(\C).
    \]
    By standard Lie theory we have a {\em Levi decomposition}
    $I=S\ltimes U$  where $U$ is unipotent and $S$ is reductive.
    (\cite{MR0092928}, see also \cite{MR1064110}, Chapter~6.)
    Let
    $K$ be a maximal compact subgroup of $S$.
    From $K\simeq SU_2(\C)$ it follows that $S\simeq SL_2(\C)$.
      \end{proof}

\section{On the structure group of a certain  bundle}

We need a criterion that certain holomorphic maps are locally
trivial fiber bundles with a Lie group as structure group.

\begin{proposition}\label{loc-triv}
    Let $G$ be a complex Lie group acting holomorphically on a
    complex manifold $X$. Assume that all isotropy groups
    are connected and conjugate to a Lie subgroup $I$ of $G$.
    Let $D$ be a complex manifold.
    Let $H,F:D\to X$ be holomorphic maps and let
    \[
    V=\{(g,z)\in G\times D: g\cdot H(z)=F(z)\}
    \]

    Let $\pi:V\to D$ be the natural projection map:
    $\pi(g,z)=z$.
    Assume that $\pi$ is surjective.

    Then $\pi$ is a holomorphically locally trivial fiber bundle with
    structure group
    \[
    (N_G(I)/Z_G(I))\ltimes I
    \]
    and  fiber $I$.
    Here we use the notations
    \begin{align*}
      N_G(I) &= \{g\in G: \forall h\in I:
      ghg^{-1}\in I\}\quad & \text{(Normalizer)}\\
      Z_G(I) &= \{g\in G: \forall h\in I: ghg^{-1}=h \}
        \quad & \text{(Centralizer)}\\
    \end{align*}
  \end{proposition}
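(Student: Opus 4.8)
The plan is to exhibit $\pi:V\to D$ as the fiber bundle associated, via a natural right action, to a principal bundle whose structure group is $N_G(I)/Z_G(I)$ acting on the fiber $I$. First I would observe that $V$ carries a natural left action of $G$ via $g\cdot(h,z)=(gh g_z^{-1}\cdot\ldots)$ — more precisely, fixing $z$, the fiber $V_z=\{g\in G:g\cdot H(z)=F(z)\}$ is a torsor under the right action of the isotropy group $G_{H(z)}$ (by $g\mapsto gk$ for $k\in G_{H(z)}$) and also under the left action of the isotropy group $G_{F(z)}$ (by $g\mapsto kg$). Both isotropy groups are conjugate to $I$, connected, and closed, and $V_z$ is nonempty by the surjectivity hypothesis. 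The key structural input is that locally on $D$ one can choose the conjugating elements holomorphically: near any $z_0\in D$, since $F$ (and $H$) are holomorphic and the orbit map $G\to G/G_{F(z_0)}$ is a holomorphic submersion admitting local sections, there is an open $U\ni z_0$ and a holomorphic map $a:U\to G$ with $a(z)\cdot F(z_0)=F(z)$ — hence $G_{F(z)}=a(z)G_{F(z_0)}a(z)^{-1}$ — and similarly a holomorphic $b:U\to G$ with $b(z)\cdot H(z_0)=H(z)$.

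Given such $a,b$ on $U$, pick one solution $g_0\in V_{z_0}$ (so $g_0 H(z_0)=F(z_0)$). Then $z\mapsto a(z)g_0 b(z)^{-1}$ is a holomorphic section of $\pi$ over $U$, because $a(z)g_0b(z)^{-1}\cdot H(z)=a(z)g_0\cdot H(z_0)=a(z)\cdot F(z_0)=F(z)$. Using this section to trivialize, the full fiber $V_z$ over $z\in U$ is $\{a(z)g_0b(z)^{-1}k:k\in b(z)G_{H(z_0)}b(z)^{-1}\}$, which after conjugating by $b(z)$ is identified with a fixed copy of $G_{H(z_0)}\cong I$. Thus $\pi^{-1}(U)\cong U\times I$ holomorphically, so $\pi$ is a holomorphically locally trivial fiber bundle with fiber $I$. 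This already establishes local triviality; the remaining point is to compute the structure group, i.e.\ the ambiguity in the trivializations on overlaps $U\cap U'$.

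To identify the structure group I would normalize the isotropy data. Fix once and for all an abstract model: since all isotropy groups are conjugate to $I$, work with trivializations in which the fiber is literally $I$ via the torsor structure from the right $G_{H(z)}$-action pulled back through the chosen holomorphic conjugations. On an overlap, two such trivializations differ by (a) changing the reference solution $g_0$, which acts by left translation by an element of the (fixed model of the) left isotropy — but left translations of $V_z$ are absorbed because they do not change the right-torsor identification — and (b) changing the holomorphic conjugator $b$ to another one $b'$, where $b'(z)^{-1}b(z)$ normalizes $G_{H(z_0)}$, hence gives a transition function valued in $N_G(I)$; the subgroup $Z_G(I)$ acts trivially on $I$ by conjugation, so the effective structure group acting on the fiber is $N_G(I)/Z_G(I)$ together with the left-translation action of $I$ on itself. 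Combining, the structure group is the semidirect product $(N_G(I)/Z_G(I))\ltimes I$, with $I$ acting on the fiber $I$ by left translations and $N_G(I)/Z_G(I)$ acting by conjugation (which normalizes the left-translation action), exactly as claimed.

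The main obstacle I anticipate is the bookkeeping in the last step: disentangling the left and right isotropy actions on the torsor $V_z$ and checking that the cocycle built from the two families of holomorphic conjugators $a,b$ together with the choice of $g_0$ lands precisely in $(N_G(I)/Z_G(I))\ltimes I$ and not something larger — in particular verifying that the $a$-conjugators (on the $F$-side) do not contribute an independent factor but are already accounted for, and that connectedness of the isotropy groups is what guarantees we never leave the identity component so that the structure group is exactly as stated. The existence of the local holomorphic conjugators is routine (local sections of orbit maps of holomorphic Lie group actions), and surjectivity of $\pi$ is assumed, so those are not obstacles.
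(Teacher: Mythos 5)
Your approach has a genuine gap at the key step: the existence of the holomorphic conjugators $a,b:U\to G$. You claim that ``since $F$ is holomorphic and the orbit map $G\to G/G_{F(z_0)}$ is a holomorphic submersion admitting local sections, there is a holomorphic $a:U\to G$ with $a(z)\cdot F(z_0)=F(z)$.'' But this presupposes that $F(z)\in G\cdot F(z_0)$ for $z$ near $z_0$, i.e.\ that $F$ maps a neighbourhood of $z_0$ into a single $G$-orbit in $X$. The hypothesis of the proposition only asserts that all \emph{isotropy groups} on $X$ are conjugate; it does not say $X$ is a single orbit, nor that $F$ or $H$ has image in one orbit. Indeed, in the paper's intended application $X=(W\tensor_\R\C)\setminus\{0\}$ and the $\GC$-orbits are the level sets of the quadratic form $B$; the maps $F,H$ genuinely wander across infinitely many orbits, so no such $a$ or $b$ exists. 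Since the rest of your argument (the local section $z\mapsto a(z)g_0 b(z)^{-1}$, the trivialization, and the cocycle computation) all rest on $a$ and $b$, the proof does not go through as written.

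The paper circumvents this in two separate ways that you should note. First, the local holomorphic section $\psi:U\to G$ with $\psi(z)\cdot H(z)=F(z)$ is obtained directly from Proposition~\ref{loc-eq} applied to the original $G$-action on $X$ (conjugacy of isotropy groups forces all orbits to have the same dimension, which is what loc-eq needs), so one does not need $F$ or $H$ to stay in a single orbit. Second, the map playing the role of your $b$ — a holomorphic $\zeta:U\to G$ with $\zeta(z)I_p\zeta(z)^{-1}=I_z$ where $I_z=G_{H(z)}$ — is produced not by lifting through the orbit of $H(z_0)$ but by passing to the Grassmannian $M$ of $k$-dimensional subspaces of $\operatorname{Lie}(G)$: the assignment $z\mapsto\operatorname{Lie}(I_z)$ lands in a \emph{single} $G$-orbit $Y\subset M$ under the adjoint action precisely because all isotropy groups are conjugate, even though $H(z)$ itself moves between different orbits in $X$. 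Applying loc-eq to the $G$-action on $Y$ gives $\zeta$ holomorphically, and connectedness of the isotropy groups upgrades the Lie-algebra conjugation to group conjugation. With $\psi$ and $\zeta$ in hand, the trivialization $(g,z)\mapsto(\zeta(z)^{-1}\psi(z)^{-1}g\,\zeta(z),z)$ does what your $a(z)g_0b(z)^{-1}$ was meant to do, without any single-orbit assumption. Your torsor observation and the heuristic identification of the structure group are fine conceptually, but you need this Grassmannian (or an equivalent device) to make the local conjugation of isotropy groups exist.
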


\begin{proof}
    Let $k=\dim(I)$.
    Let $M$ be the Grassmannian manifold pa\-ra\-me\-tri\-zing $k$-dimensional
    vector subspaces of the Lie algebra $Lie(G)$. The adjoint
    action of $G$ on $\Lie(G)$ naturally
    induces a $G$-action on $M$.
    Recall that by assumption all the isotropy groups
    \[
    I_z=\{g\in G: g(z)=z\}\quad (z\in X)
    \]
    are conjugate.
    Hence there is {\em one} $G$-orbit $Y$ in $M$
    which for {\em every} $z\in D$ contains
    the point of $M$ corresponding to the
    Lie algebra $W_z$ of the isotropy group at $H(z)$.
    We apply Proposition~\ref{loc-eq} to $G$ acting on $Y$
    and deduce that locally
    (near $p\in U\subset D$) on $D$ there are maps
    $\zeta:U\to G$ such that
    \[
    Ad(\zeta(z))\left( W_p \right) = W_z.
    \]
    Since the isotropy groups
    \[
    I_z=\{ g: g H(z)=H(z)\}\simeq I
    \]
    are connected, we deduce
    \[
    \zeta(z)I_p\zeta(z)^{-1}=I_z.
    \]
    Let $\psi:U\to G$ be a holomorphic map such that
    $\sigma:z\mapsto (\psi(z),z)$ is a local section
    $\sigma:U\to\pi^{-1}(U)$.
    
    We define a local trivialization
\[
\begin{matrix}
  I_p\times U & \simeq & \pi^{-1}(U) & \subset & V \\
  \downarrow && \downarrow && \downarrow \\
  U & = & U & \subset & D \\
\end{matrix}
\]
    via
    \[
    \pi^{-1}(U)\ni    (g,z)\stackrel{\Phi}\longrightarrow
    \left( \left( \zeta(z)\right)^{-1}\left(\psi(z)\right)^{-1}g\zeta(z)
    ; z \right) \in I_p\times U
    \]
    Let us check that $\Phi(g,z)\in I_p\times U$ for $(g,z)\in\pi^{-1}(U)$.
    First we observe that $(\psi(z),z),(g,z)\in V$ implies
    \[
    \psi(z)\cdot H(z)=F(z)= g\cdot H(z)
    \]
    which in turn implies
    \[
    \left(\psi(z)\right)^{-1}\cdot g\cdot H(z)=H(z)\iff
    \left(\psi(z)\right)^{-1}\cdot g\in I_z=      \zeta(z)I_p\zeta(z)^{-1}
    \]
    Hence
    \[
     \left( \zeta(z)\right)^{-1}\left(\psi(z)\right)^{-1}g\zeta(z)\in I_p
    \]
    
    Thus $\pi$ is holomorphically locally trivial.
  \end{proof}

\section{Reduction to the case $\Tr=0$}

\begin{lemma}\label{traceless}
  
  Let $ \Octc$ be the complexified algebra of
  octonions.
  Let $\GC=\Aut(\Octc)$.

  Let $\dom$ be a domain in $\C$.
  Let $F,H:\dom\to \Octc$ be holomorphic maps.
  Assume that $\Nm(F)=\Nm(H)$ and $\Tr(F)=\Tr(H)$
  (with $\Tr(F)=F+F^ c$ and $\Nm(F)=FF^ c$).

  Define $\hat F=\frac 12(F-F^c)$ and
  $\hat H=\frac 12(H-H^c)$.

  Then:
  \begin{enumerate}
  \item
    $\Tr(\hat F)=0=\Tr(\hat H)$.
  \item
    $\Nm(\hat F)=\Nm(\hat H)$.
  \item
    There exists a holomorphic map $\phi:\dom\to \GC$
    with $F(z)=\phi(z)\cdot(H(z))$ if and only
    there exists a holomorphic map $\phi:\dom\to \GC$
    with $\hat F(z)=\phi(z)\cdot(\hat H(z))$.
  \end{enumerate}
\end{lemma}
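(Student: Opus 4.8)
The plan is to prove the three assertions in order. Items (i) and (ii) are pointwise identities in the algebra $\Octc$, whereas item (iii) will follow at once --- in fact with the \emph{same} map $\phi$ --- from the way automorphisms of $\Octc$ interact with the eigenspace decomposition $\Octc = C_\C \oplus W_\C$ of the conjugation map discussed in Section~\ref{sect-prep}.

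For (i), I would observe that octonionic conjugation $x \mapsto x^c$ on $\Octc$ is an involution, so $(\hat F)^c = \tfrac12(F^c - F) = -\hat F$, whence $\Tr(\hat F) = \hat F + (\hat F)^c = 0$; the identical computation gives $\Tr(\hat H) = 0$. For (ii), set $F_0 = \tfrac12\Tr(F)$, so that $F = F_0 + \hat F$ with $F_0^c = F_0$ and, by Section~\ref{sect-prep}, with $F_0$ taking values in the center $C_\C$ of $\Octc$. Since $F_0$ is central and $(\hat F)^c = -\hat F$,
\[
\Nm(F) = F\,F^c = (F_0 + \hat F)(F_0 - \hat F) = F_0^2 - \hat F^2 = F_0^2 + \Nm(\hat F),
\]
that is, $\Nm(\hat F) = \Nm(F) - \tfrac14\Tr(F)^2$, and likewise $\Nm(\hat H) = \Nm(H) - \tfrac14\Tr(H)^2$. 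The hypotheses $\Nm(F) = \Nm(H)$ and $\Tr(F) = \Tr(H)$ then yield $\Nm(\hat F) = \Nm(\hat H)$.

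For (iii) I would use two elementary facts about $\GC = \Aut(\Octc)$: every $\alpha \in \GC$ fixes $C_\C = \C\cdot 1$ pointwise (it fixes $1$ and is $\C$-linear), and every $\alpha \in \GC$ preserves $W_\C$. The second holds because automorphisms of $\Octc$ commute with octonionic conjugation --- equivalently, respect the splitting $\Octc = C_\C \oplus W_\C$ --- which one sees either directly from the description $W_\C = \bigl(\{y \in \Octc : y^2 \in C_\C\}\setminus C_\C\bigr)\cup\{0\}$ (manifestly stable under any $\C$-algebra automorphism fixing $1$), or by noting that $\GC$ is the smallest complex Lie subgroup of $GL_\C(\Octc)$ containing $\Aut(\Oct)$, whose elements commute with conjugation by Proposition~\ref{auto-ortho}. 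Granting this, I would write $F = F_0 + \hat F$, $H = H_0 + \hat H$ with $F_0,H_0$ central-valued; the hypothesis $\Tr(F) = \Tr(H)$ gives $F_0 = H_0$. Then for \emph{any} holomorphic $\phi : D \to \GC$ and any $z \in D$,
\[
\phi(z)\bigl(H(z)\bigr) = \phi(z)\bigl(H_0(z)\bigr) + \phi(z)\bigl(\hat H(z)\bigr) = H_0(z) + \phi(z)\bigl(\hat H(z)\bigr),
\]
where $\phi(z)(\hat H(z)) \in W_\C$, while $F(z) = H_0(z) + \hat F(z)$ with $\hat F(z)\in W_\C$. Comparing the $C_\C$- and $W_\C$-components of the direct sum $\Octc = C_\C\oplus W_\C$ shows that $\phi(z)(H(z)) = F(z)$ holds for all $z$ if and only if $\phi(z)(\hat H(z)) = \hat F(z)$ holds for all $z$; thus one and the same $\phi$ realises both identities, and in particular the two existence statements are equivalent, which is (iii).

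The only step requiring genuine care is the claim used in (iii) that elements of $\GC = \Aut(\Octc)$ stabilise $W_\C$. The subtlety is purely terminological: here ``conjugation'' is octonionic conjugation, which is \emph{complex-linear} on $\Octc = \Oct\tensor_\R\C$ and must be kept apart from the complex conjugation of that vector space; once this distinction is respected, the assertion is just the $\C$-linear extension of the corresponding property of $\Aut(\Oct)$, and everything else is routine bookkeeping with the splitting $\Octc = C_\C\oplus W_\C$.
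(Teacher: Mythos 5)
Your proof is correct and self-contained. Note that the paper itself gives no argument for this lemma --- its ``proof'' is only the citation ``See \cite{BWH}, Lemma~11.1'' --- so there is no in-paper proof to compare against; what you have written supplies the omitted argument. The computations for (i) and (ii) are sound: the key point is that $F_0 = \tfrac12\Tr(F)$ is $C_\C$-valued, hence central and associating, so $\Nm(F) = (F_0+\hat F)(F_0-\hat F) = F_0^2 - \hat F^2 = F_0^2 + \Nm(\hat F)$ really does hold despite nonassociativity, and the hypotheses then give $\Nm(\hat F)=\Nm(\hat H)$ directly. For (iii), your reduction is the right one: every $\alpha\in\Aut(\Octc)$ fixes $C_\C$ pointwise and stabilises $W_\C$, which you justify cleanly either by the automorphism-invariant description $W_\C = \bigl(\{y\in\Octc : y^2\in C_\C\}\setminus C_\C\bigr)\cup\{0\}$ or by the fact that $\GC$ is the smallest complex Lie subgroup of $GL_\C(\Octc)$ containing $\Aut(\Oct)$, whose elements commute with octonionic conjugation. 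Together with $\Tr(F)=\Tr(H)\Rightarrow F_0=H_0$, comparing $C_\C$- and $W_\C$-components shows that $\phi(z)(H(z))=F(z)$ for all $z$ holds if and only if $\phi(z)(\hat H(z))=\hat F(z)$ for all $z$, with the \emph{same} $\phi$; this is in fact a slightly sharper statement than the ``there exists $\phi$ iff there exists $\phi$'' form in the lemma, and it is exactly what is needed in the proof of the main theorem.
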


\begin{proof}
See \cite{BWH}, Lemma~14.1.
\end{proof}

\section{Vanishing orders}

Here we introduce the notion of {``\rm central divisors''}.
  As a preparation for this, we first discuss divisors for vector valued
  function.

\subsection{General maps}

Normally, {\em divisors} are defined for holomorphic 
functions with values in $\C$. Here we extend this notion to holomorphic
maps from Riemann surfaces to higher-dimensional complex vector spaces.

\begin{definition}\label{div-vc}
  Let $F:X\to V=\C^n$ be a holomorphic map from a Riemann surface $X$
  to a complex vector space $V=\C^n$. Assume $F\not\equiv 0$.

  The {\em divisor} of $F$ is the divisor corresponding to the
  pull back of the ideal sheaf of the origin, i.e., for $F=(F_1,\ldots,F_n)$,
  $F_i:X\to\C$ we have $div(F)=\sum_{p\in X} m_p\{p\}$
  where $m_p$ denotes the minimum of the multiplicities $mult_p(F_i)$.
\end{definition}
  
\begin{proposition}\label{prop-reduce}
Let $X$ be a non-compact Riemann surface, $V$ a complex vector space
and
$F,H:X\to V$ holomorphic maps
which are not identically zero.
Assume that $F,H$ have the same zero divisor.

Then there exists a holomorphic function
$\lambda:X\to\C$ and holomorphic maps $\tilde F, \tilde H: X\to V\setminus\{0\}$
such that $F=\lambda\tilde F$, $H=\lambda\tilde H$.
\end{proposition}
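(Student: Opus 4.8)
The plan is to reduce the assertion to producing a single scalar holomorphic function that carries precisely the common vanishing of $F$ and $H$, and then to divide through by it.

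First I fix a linear isomorphism $V\cong\C^n$ and write $F=(F_1,\dots,F_n)$, $H=(H_1,\dots,H_n)$ with $F_i,H_i:X\to\C$ holomorphic. Since $F\not\equiv 0$, some component $F_{i_0}$ is not identically zero, so the common zero set of the $F_i$ is contained in the discrete set $\{F_{i_0}=0\}$; likewise for $H$. Hence the common zero divisor
\[
\Delta:=\operatorname{div}(F)=\operatorname{div}(H)=\sum_{p\in X}m_p\{p\},\qquad
m_p=\min_i\operatorname{mult}_p(F_i)=\min_i\operatorname{mult}_p(H_i),
\]
is an effective divisor with discrete support.

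Next I invoke the classical fact that on a non-compact Riemann surface every effective divisor is the divisor of a holomorphic function (the Weierstrass-type theorem of Behnke--Stein and Florack); equivalently this follows from the exponential sheaf sequence $0\to\Z\to\O_X\to\O_X^*\to 0$ together with $H^1(X,\O_X)=0$ (as $X$ is Stein) and $H^2(X,\Z)=0$ (as $X$ is homotopy equivalent to a one-dimensional $CW$-complex, cf.\ \cite{Hamm83}), which give $H^1(X,\O_X^*)=0$, so the line bundle associated to $\Delta$ is trivial. Thus I may choose a holomorphic $\lambda:X\to\C$ with $\operatorname{div}(\lambda)=\Delta$. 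This is the only non-elementary input and the main obstacle; everything else is routine local complex analysis.

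Finally I set $\tilde F:=F/\lambda$ and $\tilde H:=H/\lambda$ and verify the claimed properties locally. Near a point $p$, in a holomorphic coordinate $z$ with $z(p)=0$, write $\lambda=u\,z^{m_p}$ with $u$ holomorphic and $u(p)\ne 0$, and $F_i=v_i\,z^{k_i}$ with $v_i$ holomorphic, $k_i\ge m_p$, and $k_i=m_p$ for at least one index. Then $(\tilde F)_i=(v_i/u)\,z^{k_i-m_p}$ is holomorphic, and for the index realizing the minimum $(\tilde F)_i(p)=v_i(p)/u(p)\ne 0$. Hence $\tilde F$ is holomorphic on $X$ with $\tilde F(p)\ne 0$ for every $p$, i.e.\ $\tilde F:X\to V\setminus\{0\}$; the identical argument gives $\tilde H:X\to V\setminus\{0\}$, and $F=\lambda\tilde F$, $H=\lambda\tilde H$ hold by construction.
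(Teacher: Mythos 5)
Your proof follows exactly the same route as the paper's: pick a holomorphic $\lambda$ whose divisor is the common divisor $\operatorname{div}(F)=\operatorname{div}(H)$ (using that on a non-compact Riemann surface every divisor is principal) and then divide through. You have simply supplied more detail than the paper does, namely a justification of the Behnke--Stein/Florack fact via the exponential sequence and an explicit local check that $\tilde F$ and $\tilde H$ are holomorphic and nowhere vanishing; this is correct but not a different argument.
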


\begin{proof}
  Recall that on a non-compact Riemann surface every divisor
  is a {\em principal} divisor, i.e., the divisor of a holomorphic
  function.

  We choose a holomorphic function $\lambda$ on $X$ with
  \[
  div(\lambda)=div(H)=div(F)
  \]
  and define $\tilde F=F/\lambda$, $\tilde H=H/\lambda$.
\end{proof}

\begin{lemma}\label{div-unchanged}
Let $X$ be a Riemann surface, $V$ a vector space
and
$F:X\to V$, $\phi:X\to GL(V)$ be holomorphic maps, $F\not\equiv 0$.

Define $H(z)=\phi(z)\left(F(z)\right)$.

Then $F$ and $H$ have the same divisor.
\end{lemma}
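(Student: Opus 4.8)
The plan is to show that $F$ and $H=\phi\cdot F$ have the same divisor by comparing, point by point, the vanishing multiplicities of all component functions. Fix a point $p\in X$ and a local holomorphic coordinate $t$ centered at $p$. Writing $F=(F_1,\dots,F_n)$ in a fixed basis of $V$, the multiplicity $m_p(F)$ appearing in Definition~\ref{div-vc} is $\min_i \mathrm{mult}_p(F_i)$, i.e.\ the largest $m$ such that $F(t)=t^m G(t)$ for some holomorphic $G:U\to V$ with $G(p)\ne 0$ on a neighborhood $U$ of $p$. So it suffices to prove that $m_p(F)=m_p(H)$ for every $p$.

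First I would establish the inequality $m_p(H)\ge m_p(F)$. Since $\phi:X\to GL(V)$ is holomorphic, each matrix entry of $\phi$ (in the fixed basis) is holomorphic, hence $H(t)=\phi(t)(F(t))$ has component functions that are $\C$-linear combinations of the $F_i$ with holomorphic coefficients. Therefore if $F(t)=t^{m}G(t)$ with $m=m_p(F)$ and $G$ holomorphic, then $H(t)=t^{m}\,\phi(t)(G(t))$, so $t^m$ divides every component of $H$ near $p$, giving $m_p(H)\ge m_p(F)$. The key point is that $GL(V)$-valued means the entries are holomorphic (no poles), so multiplication does not decrease vanishing order.

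For the reverse inequality I would use invertibility: $F(z)=\phi(z)^{-1}(H(z))$, and $z\mapsto\phi(z)^{-1}$ is again a holomorphic map $X\to GL(V)$ (inversion is holomorphic on $GL(V)$, equivalently the entries of $\phi^{-1}$ are $\det(\phi)^{-1}$ times holomorphic cofactors, and $\det\phi$ is nowhere zero). Applying the same argument as above with the roles of $F$ and $H$ swapped yields $m_p(F)\ge m_p(H)$. Combining the two inequalities gives $m_p(F)=m_p(H)$ for all $p\in X$, hence $\mathrm{div}(F)=\mathrm{div}(H)$. (Note $H\not\equiv 0$ because $F\not\equiv 0$ and $\phi(z)$ is invertible, so the divisor of $H$ is well-defined.)

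There is no real obstacle here; the statement is essentially the observation that multiplying by a pointwise-invertible holomorphic matrix is a symmetric operation that preserves vanishing order. The only mild care needed is to phrase the argument basis-independently or to note that the minimum-of-multiplicities definition is basis-independent — which follows precisely from the $GL(V)$-equivariance just used, applied to constant $\phi$. If one prefers, the whole statement can be summarized in one line: locally $F=t^{m_p(F)}G$ with $G(p)\ne 0$, then $H=t^{m_p(F)}(\phi G)$ with $(\phi G)(p)=\phi(p)(G(p))\ne 0$ since $\phi(p)\in GL(V)$, so $m_p(H)=m_p(F)$ directly, without needing to invoke invertibility a second time.
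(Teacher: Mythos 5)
Your proof is correct and follows essentially the same route as the paper: one inequality from holomorphicity of $\phi$, the reverse from holomorphicity of $\phi^{-1}$. The one-line variant you append at the end (using $G(p)\ne 0$ and $\phi(p)\in GL(V)$ directly) is a slight streamlining, but the substance is the same.
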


\begin{proof}
  Let $div(F)=\sum_p m_p\{p\}$. Then for every $p\in X$ and
  $i\in\{1,\ldots,n\}$
  the germ of $F_i$ at $p$ is a divisible by $z_p^{m_p}$ where
  $z_p$ is a local coordinate with $z_p(p)=0$.
  Since $\phi(p)$ is linear, the components $H_i$ likewise have
  germs at $p$ which are 
  divisible by $z_p^{m_p}$.
  Hence $div(H)\ge div(F)$.

  The same arguments show that also $div(F)\ge div(H)$,
  since
  \[
  F(z)=\tilde\phi(z)\left(H(z)\right)
  \]
  for
  \[
  \tilde\phi(z)=\left(\phi(z)\right)^{-1}.
  \]

  Thus $div(F)=div(H)$.
\end{proof}

\subsection{Central divisors}\label{ss-cdiv}

\newcommand{\Cc}{C_{\C}}

In \cite{BW1}, Definition~3.1, we introduced the notion of a
{\em slice divisor}.
Here we will need a different notion of divisors.

Namely, we need a notion of divisor which measures
 where a given stem function assumes a value in
  the center $C_\C$ of $\Octc$.
This we call {``\em central divisor''}.

\begin{definition}\label{def-cdiv}
  Let $\Octc=\Oct\tensor_R\C$ be the complexification
  of the octonions with center
  $\Cc=\R\tensor_R\C\simeq\C$.
  $\dom\subset\C$ a domain, $F:\dom\to\Octc$ a holomorphic
  map. Assume $F(\dom)\not\subset \Cc$.

  The {\em central divisor} $\cdiv(F)$ is defined as the
  divisor (in the sense of Definition~\ref{div-vc})
  of the map from $\dom$ to $\Octc/\Cc$.
\end{definition}

Let $W$ denote the space of imaginary octonions, i.e.,
  \[
  W=\{q\in\Oct: \Tr(q)=0\}
  \]
Then 
$\Octc=\Cc\oplus\left(\WC\right)$ and
we can decompose $F:\dom \to\Octc$ as
\begin{equation}\label{F''}
  F=(F',F''):\dom\to \Cc\times(\WC)
\end{equation}
and the central divisor $\cdiv(F)$ equals
$\sum_{p\in\dom} n_p\{p\}$ where $n_p$ denotes the vanishing order of $F''$
at $p$.

\begin{example}
  Consider
  \[
  F(z)=1\tensor z + i\tensor z^2(z-1) + j\tensor z^3(z-1)^2.
  \]

  Then
  \[
  \cdiv(F)=
  2\cdot\{0\}+1\cdot \{1\}.
  \]
\end{example}

{\em Caveat:} These central divisors do {\em not} satisfy the
usual functoriality:
\begin{example}
  Let
  \[
  F(z)=1+i\tensor z,\quad H(z)=1+j\tensor (1+z)
  \]
  Then $\cdiv(F)=1\cdot\{0\}$ and $\cdiv(H)=1\cdot\{-1\}$, but
  \[
  \cdiv(FH)=\cdiv(1+i\tensor z+j\tensor (z+1)+k\tensor(z^2+z)
  \]
  is empty.
  Thus
  \[
  \cdiv(FH)\ne\cdiv(F)+\cdiv(H).
  \]
  (This is an example for $\H$, first presented in \cite{BWH}.
  But of course, $\H$ is a subalgebra of $\Oct$, so it is an
  example for the octonions as well.)
\end{example}

  \subsection{Central divisor
    for slice functions}\label{cdiv-stem}

  Let $f$ be a
  not slice-preserving slice regular function
  with associated stem function $F$.
Then we may simply define $\cdiv(f)$ as
\[
\cdiv(f)\stackrel{def}{=}\cdiv(F)
\]

Note that for a slice regular function $f$ on an axially
  symmetric domain $\Omega_D$ its central divisor $\cdiv(f)$ is
  a divisor on $D$ (and not on $\Omega_D$).
  
\section{Proof of the Main Theorem}\label{pf-main}

  We are now in a position to prove our main theorem
\ref{mainth}.

First, we consider slice preserving functions
(Lemma~\ref{main-slice-pre}).

Second, we deal with the
case where the image of $F$ is contained in the
null cone of the bilinear form $B$
(Proposition~\ref{F-in-null-cone}).

Third, we prove Proposition~\ref{F-not-in-null-cone}
which embodies the most difficult part of Theorem~\ref{mainth}.

Finally we complete the proof of Theorem~\ref{mainth}.

\begin{lemma}\label{main-slice-pre}
  Let $\Oct$ be the algebra of octonions,
  $\Octc=\Oct\tensor_\R\C$,
  $\GC=Aut(\Octc)$. Let $\dom\subset\C$ be a symmetric domain
  and let $\Omega_D\subset \Oct$ denote the corresponding axially symmetric
  domain.

  Let $f,h:\Omega_D\to \Oct$ be slice regular functions
  and let $F,H:\dom\to\Octc$
  denote the corresponding stem functions.

  Assume that $f$ is slice-preserving.
    Then the following are equivalent:
  \begin{enumerate}[label=(\roman*)]
  \item
    $f=h$.
  \item
    $F=H$.
  \item
    For every $z\in\dom$ there exists an element
    $\alpha\in Aut(\Octc)=\GC$ such that $F(z)=\alpha(H(z))$.
\item
    There is a holomorphic map $\phi:\dom\to\GC$
    such that $F(z)=\phi(z)\left(H(z)\right)\ \forall z\in\dom$.
  \end{enumerate}
\end{lemma}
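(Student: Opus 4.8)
The plan is to establish the cycle of implications $(i)\Rightarrow(ii)\Rightarrow(iv)\Rightarrow(iii)\Rightarrow(i)$, most of which are nearly immediate, leaving a single genuine point to verify.

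First I would note that $(i)\iff(ii)$ is just the bijective correspondence between slice regular functions and their stem functions (so $f=h$ exactly when $F=H$), as set up in \paragraph\ref{sect-prep}. For $(ii)\Rightarrow(iv)$, if $F=H$ one takes $\phi\equiv\mathrm{id}$, which is certainly holomorphic with values in $\GC$. The implication $(iv)\Rightarrow(iii)$ is trivial: a holomorphic map $\phi:\dom\to\GC$ realizing $F(z)=\phi(z)(H(z))$ provides, pointwise, the required $\alpha=\phi(z)$.

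The only implication with content is $(iii)\Rightarrow(i)$ (equivalently $(iii)\Rightarrow(ii)$), and here the hypothesis that $f$ is slice preserving is essential (cf.\ the Example following Theorem~\ref{mainth}). The key point is that by Proposition~\ref{eq-s-p}, $f$ slice preserving means $F(\dom)\subset C_{\C}=\R\tensor_\R\C$, i.e.\ $F$ takes values in the center of $\Octc$. Every automorphism $\alpha\in\GC=\Aut(\Octc)$ fixes the center $C_{\C}$ pointwise (it acts trivially on $\R$ and $\C$-linearly, hence trivially on $\R\tensor_\R\C$). Therefore, for each $z\in\dom$, from $F(z)=\alpha(H(z))$ for some $\alpha\in\GC$ we want to conclude $F(z)=H(z)$. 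This requires knowing that $H(z)$ is \emph{also} central: if $H(z)\in C_{\C}$ then $\alpha(H(z))=H(z)$ and we are done. To see $H(z)\in C_{\C}$: write $H(z)=H'(z)+H''(z)$ with $H'(z)\in C_{\C}$ and $H''(z)\in W_{\C}$; since $\GC$ acts trivially on $C_{\C}$ and by $B$-preserving linear maps on $W_{\C}$ (as in the proof of Corollary~\ref{aut-if-n-tr}), $F(z)=\alpha(H(z))=H'(z)+\alpha(H''(z))$, and comparing the $W_{\C}$-components — with $F(z)$ central, so its $W_{\C}$-component is $0$ — gives $\alpha(H''(z))=0$, hence $H''(z)=0$ since $\alpha$ is invertible. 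Thus $H(z)\in C_{\C}$ and $F(z)=\alpha(H(z))=H(z)$ for all $z$, so $F=H$ and $f=h$.

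I do not expect a serious obstacle here; the whole lemma is essentially the observation that automorphisms of $\Octc$ act trivially on the center, combined with the characterization of slice preserving functions in Proposition~\ref{eq-s-p}. The one subtlety worth stating carefully is the argument in the previous paragraph that the pointwise hypothesis $(iii)$ forces $H(z)$ itself into the center — this is what rules out the pathology exhibited in the Example, where $H\equiv 0$ is central but $F$ is not. Once that is in place, $(iii)\Rightarrow(ii)$ and the cycle closes.
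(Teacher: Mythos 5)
Your proposal is correct and follows essentially the same route as the paper: the easy implications are dispatched immediately, and the substantive step $(iii)\Rightarrow(ii)$ rests on Proposition~\ref{eq-s-p} together with the fact that $\Aut(\Octc)$ acts trivially on the center $C_\C$. Your argument that $H(z)$ must be central is slightly more elaborate than needed — once one knows $\alpha^{-1}$ fixes $C_\C$ pointwise, $H(z)=\alpha^{-1}(F(z))=F(z)$ follows directly without invoking the decomposition $C_\C\oplus W_\C$ or the $B$-preserving action — but this is a harmless stylistic difference.
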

\begin{proof}
  $(i)\iff(ii)\implies(iv)\implies(iii)$ is obvious.
  
  $f$ being slice preserving is equivalent to
  \[
  \forall z\in D: F(z)\in C_\C=\{q\in\Octc:q=\bar q\}\simeq\C
  \]
  (see Proposition~\ref{eq-s-p}).
  Now we obtain $(iii)\implies(ii)$, because the automorphism group
  $\Aut(\Octc)$ acts trivially on $C_\C$.
\end{proof}

\begin{proposition}\label{F-in-null-cone}
  Let $\Oct$ be the algebra of octonions
  and $\Oct_\C=\Oct\tensor_\R\C$.

  Let $\dom\subset\C$ be a symmetric domain.
  
  Let $F,H:\dom\to \Oct_\C\setminus\{0\}$ be holomorphic maps.

  Assume that
  $\Tr(F)=\Tr(H)=0$, $\Nm(F)=\Nm(H)$ 
  and that
  \[
  F(\dom)\subset\{v\in W_\C: B(v,v)=0\}.
  \]

  Then there exists a holomorphic map $\phi:\dom\to\GC$ such that
  \[
  \phi(z)\left(F(z)\right)=H(z)\ \forall z\in\dom.
  \]
\end{proposition}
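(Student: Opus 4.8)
The plan is to show that the hypotheses force both $F$ and $H$ to take values in a single $\GC$-orbit — the punctured null cone of $B$ — and then to obtain $\phi$ as a holomorphic section of an associated incidence bundle over $\dom$, assembled from results proved above.

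First I would reduce to the null cone. Since $\Tr(F)=\Tr(H)=0$, both maps land in $W_\C$, and recall (as in the proof of Corollary~\ref{aut-if-n-tr}) that $\Nm(x)=x\bar x=B(x,x)$ for every $x\in\Octc$. The assumption $F(\dom)\subset\{v\in W_\C:B(v,v)=0\}$ thus says $\Nm(F)\equiv0$, hence $\Nm(H)=\Nm(F)\equiv0$ and $B(H(z),H(z))=\Nm(H(z))=0$ for all $z$. As neither $F$ nor $H$ vanishes, both map $\dom$ into
\[
H_0=\{v\in W_\C:B(v,v)=0\}\setminus\{0\}.
\]
By Proposition~\ref{VC-orbits} (applied with $W=V\cong\R^7$ the space of imaginary octonions and $G=\Aut(\Oct)$, which acts transitively on the unit sphere of $W$ by Corollary~\ref{oct-iso}), the set $H_0$ is a single $\GC$-orbit; and by Proposition~\ref{spez-iso} the isotropy group of $\GC$ at each point of $H_0$ is isomorphic to $SL_2(\C)\ltimes\C^5$ with $\C^5$ unipotent normal, hence connected, and all these isotropy groups are conjugate since they lie in one orbit.

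Next I would apply Proposition~\ref{loc-triv} with $G=\GC$, $X=H_0$, $D=\dom$ and the holomorphic maps $H,F:\dom\to H_0$, to
\[
V=\{(g,z)\in\GC\times\dom:g\cdot H(z)=F(z)\},\qquad \pi(g,z)=z,
\]
noting that $\pi$ is surjective because $H_0$ is one orbit. Proposition~\ref{loc-triv} then presents $\pi:V\to\dom$ as a holomorphically locally trivial fiber bundle with connected fiber $I\cong SL_2(\C)\ltimes\C^5$ and structure group $S:=(N_{\GC}(I)/Z_{\GC}(I))\ltimes I$, a complex Lie group acting transitively on the fiber $I$ — transitivity already holding for the subgroup $I$ acting on itself by translations. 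Since $\dom$ is a connected open subset of $\C$, it is a non-compact Riemann surface, so Proposition~\ref{section-2-con} provides a global holomorphic section $\sigma(z)=(\psi(z),z)$; then $\psi:\dom\to\GC$ is holomorphic with $\psi(z)(H(z))=F(z)$, and $\phi(z):=\psi(z)^{-1}$ (holomorphic, since inversion in $\GC$ is holomorphic) satisfies $\phi(z)(F(z))=H(z)$ for all $z\in\dom$.

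I do not expect a genuine obstacle: once the reduction to $H_0$ is in place, the proof is an assembly of Propositions~\ref{VC-orbits}, \ref{spez-iso}, \ref{loc-triv} and \ref{section-2-con}. The two structural inputs that make it work are that the isotropy groups along $H_0$ are \emph{connected} — which is what lets Proposition~\ref{loc-triv} give a locally trivial bundle with a manageable structure group — and that the fiber is connected and the structure group acts transitively on it, which is exactly the hypothesis of Proposition~\ref{section-2-con}. The only point to be careful about is to restrict the ambient $\GC$-manifold to the orbit $H_0$ before invoking Proposition~\ref{loc-triv}, since the $\GC$-isotropy groups on all of $\Octc$ are not mutually conjugate.
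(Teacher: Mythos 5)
Your proof is correct and assembles from the paper's cited tools the same chain of conclusions, but it takes a genuinely different structural route to the section. The paper's own proof does \emph{not} use the incidence variety $V$ or Proposition~\ref{loc-triv} here: instead, once it has reduced to $F,H:\dom\to H_0\simeq\GC/I$, it pulls back the $I$-principal bundle $\GC\to\GC/I$ separately via $F$ and via $H$, applies Proposition~\ref{section-2-con} to each pullback (using the connectedness of $I$ from Proposition~\ref{spez-iso}) to get holomorphic lifts $\tilde F,\tilde H:\dom\to\GC$, and then sets $\phi(z)=\tilde H(z)\cdot\tilde F(z)^{-1}$. Your route instead forms $V=\{(g,z):g\cdot H(z)=F(z)\}$, invokes Proposition~\ref{loc-triv} to realize $\pi:V\to\dom$ as a locally trivial holomorphic $I$-bundle with structure group $(N_{\GC}(I)/Z_{\GC}(I))\ltimes I$, and then applies Proposition~\ref{section-2-con} once. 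Both arguments are valid; yours has the advantage of running in exact parallel with the treatment of the main case in Proposition~\ref{F-not-in-null-cone} (where the incidence bundle $V$ is unavoidable because the orbit type jumps), whereas the paper's principal-bundle argument is a bit more economical — it avoids Proposition~\ref{loc-triv} entirely and does not need to analyse the structure group of $V$ or verify transitivity of its action on the fiber. Your remark that one must restrict the ambient space to the orbit $H_0$ before invoking Proposition~\ref{loc-triv} is exactly right and is the one place where carelessness could cause trouble.
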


\begin{proof}
  Due to our assumption
  $F(\dom)\subset\{v\in W_\C: B(v,v)=0\}$
  we know that $F(\dom)\subset W_\C$ and $\Nm(F)\equiv 0$.
  Thus $\Nm(H)=\Nm(F)\equiv 0$ which
  (in combination with $\Tr(H)=0$) implies
  \[
  H(\dom)\subset\{v\in W_\C: B(v,v)=0\}.
  \]
  Recall that
  $H_0=\{v\in W_{\C}:B(v,v)=0\}\setminus\{0\}$ is one
  orbit of $\GC=\Aut(\Octc)$ (Proposition~\ref{VC-orbits}).
  Hence $H_0\simeq\GC/I$ for some complex Lie subgroup $I$.
  Thus we may regard $F,H$ as holomorphic maps from $\dom$ to
  the quotient manifold $\GC/I$.
  Due to Proposition~\ref{spez-iso} we know that $I$ is connected.
  Now $\GC\to\GC/I$ is a $I$-principal bundle by standard Lie theory.
  If we pull-back this $I$-principal bundle via $F$ or $H$, we obtain
  an $I$-principal bundle over $\dom$ which admits a holomorphic section
  due to Proposition~\ref{section-2-con}.
  These sections induce liftings of the maps $F,H:\dom\to \GC/I$ to maps
  $\tilde F,\tilde H:\dom\to \GC$.
  \begin{center}
    \newcommand{\komma}{,} %
    \begin{tikzcd}[scale=4]
      & \GC \arrow[d,""]
      \\
      D \arrow[r,"F\komma H"']  \arrow[ru,"\tilde F\komma\tilde H"] & \GC/I
      \\
    \end{tikzcd}
  \end{center}

  Now we may define
  the desired map $\phi$ using the group structure of $\GC$
  \[
  \phi(z)=\tilde H(z)\cdot\left(\tilde F(z)\right)^{-1}.
  \]
  Then
  \[
  \phi(z) \cdot\tilde F(z)=\tilde H(z)\quad\implies\quad
  \phi(z) \left(F(z)\right)=H(z).
  \]
\end{proof}

\begin{proposition}\label{F-not-in-null-cone}
  Let $\Oct$ be the algebra of octonions
  and $\Oct_\C=\Oct\tensor_\R\C$.

  Let $\dom\subset\C$ be a symmetric domain.
    
  Let $F,H:\dom\to \Oct_\C\setminus\{0\}$ be holomorphic maps
  such that $\overline{F(\bar z)}=F(z)$,
  $\overline{H(\bar z)}=H(z)$,

    Assume that
      $\Tr(F)=\Tr(H)=0$, $\Nm(F)=\Nm(H)$ 
     and that
    \[
    F(\dom)\not\subset \{q\in\Octc:B(q,q)=0\}.
    \]
  
  Then there exists a holomorphic map $\phi:\dom\to\GC$ such that
  \[
  \phi(z)\left(F(z)\right)=H(z)\ \forall z\in\dom.
  \]
\end{proposition}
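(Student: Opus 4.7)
The plan is to follow the road map from Section~4: realize the required $\phi$ as a holomorphic section of a certain projection
\[
\pi\colon V\longrightarrow D,\qquad V=\{(g,z)\in\GC\times D:\ g(F(z))=H(z)\},
\]
first construct a continuous global section by combining the simple connectivity of the generic isotropy with the results on global sections over Riemann surfaces, and then upgrade to a holomorphic section via Oka theory (Proposition~\ref{oka}).

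Since $\Tr(F)=\Tr(H)=0$, both $F$ and $H$ take values in $W_\C$, and since their images avoid $0$, both are maps $D\to W_\C\setminus\{0\}$. The function $z\mapsto B(F(z),F(z))=\Nm(F)(z)$ is holomorphic on $D$ and, by hypothesis, not identically zero, so its zero set $L\subset D$ is discrete. Using $\Nm(F)=\Nm(H)$ one checks $L=\{z\in D:B(H(z),H(z))=0\}$ as well. The assumptions on $\Nm$ and $\Tr$ ensure via Corollary~\ref{aut-if-n-tr} that for every $z\in D$ there exists $g\in \GC$ with $g(F(z))=H(z)$, so $\pi$ is surjective. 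Moreover, since the $\GC$-orbits in $W_\C\setminus\{0\}$ all have complex dimension $6$ (Corollary~\ref{eq-iso-dim}), Proposition~\ref{loc-eq} supplies local holomorphic sections of $\pi$ everywhere on $D$.

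On $D_0=D\setminus L$, every value $F(z),H(z)$ lies outside the null cone, and by Proposition~\ref{gen-SL3} together with Corollary~\ref{cor-iso} the $\GC$-isotropy there is isomorphic to the connected, simply-connected group $SL_3(\C)$. Proposition~\ref{loc-triv} then makes $\pi^{-1}(D_0)\to D_0$ a holomorphically locally trivial fiber bundle with fiber $SL_3(\C)$. Now $D$ is a non-compact Riemann surface, $\pi$ admits local sections at every point, and its restriction to $D\setminus L$ is locally trivial with connected, simply-connected fiber; consequently Proposition~\ref{global-con} produces a continuous global section $\sigma_0\colon D\to V$.

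To conclude, apply Proposition~\ref{oka}. Its essential input is that all $\GC$-isotropy groups at the points $H(z)$ share a common complex dimension: over $D_0$ the isotropy is $SL_3(\C)$ of complex dimension $8$, and over $L$ it is $SL_2(\C)\ltimes U$, also of complex dimension $3+5=8$ by Proposition~\ref{spez-iso}. Since $D$ is Stein, Proposition~\ref{oka} upgrades $\sigma_0$ to a holomorphic section $\sigma\colon D\to V$, whose $\GC$-component is the desired map $\phi$. The hard part is really the crossing of the discrete set $L$: at those points the isotropy type jumps from $SL_3(\C)$ to $SL_2(\C)\ltimes U$, but by a fortunate coincidence of dimensions the Oka framework of Proposition~\ref{oka} still applies, while the simple-connectivity of the generic fiber is exactly what permits the topological gluing carried out in Proposition~\ref{global-con}.
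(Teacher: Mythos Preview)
Your proof is correct and follows essentially the same route as the paper: define $V$ and $\pi$, use equality of orbit (equivalently isotropy) dimensions on $W_\C\setminus\{0\}$ to get local holomorphic sections via Proposition~\ref{loc-eq}, apply Proposition~\ref{loc-triv} on $D\setminus L$ to obtain a locally trivial $SL_3(\C)$-bundle, invoke Proposition~\ref{global-con} for a global continuous section, and finish with Proposition~\ref{oka}. The only cosmetic differences are that you cite Corollary~\ref{eq-iso-dim} and Propositions~\ref{gen-SL3}/\ref{spez-iso} where the paper cites Corollary~\ref{dim-c-isotropy}, and that you orient $V$ as $\{g(F(z))=H(z)\}$ rather than the paper's $\{F(z)=\alpha H(z)\}$, which is immaterial.
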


\begin{proof}
  Throughout the proof we will use
    the fact that $\dom$ is a non-compact Riemann surface.

    Note that we assume $\Tr(F)=\Tr(H)=0$.
    It follows that the images $F(\dom),H(\dom)$ are contained in $\WC$.

    Thus we may regard $F$ and $H$ as holomorphic maps from $\dom$ to
    $\left(\WC\right)\setminus\{0\}$.

From our assumption on $F$ and $H$ we deduce that for every $z\in\dom$ there
is an element $g\in G_\C$ with $H(z)=gF(z)$ (Corollary~\ref{aut-if-n-tr}).

The case where both $F$ and $H$ are constant is trivial.
Hence we may assume that at least one of the two maps is not constant.
Without loss of generality, we assume $F$ to be non-constant.

We define a complex space $V$ and a projection map $\pi:V\to\dom$:
\begin{align*}
  &V=\{(\alpha,z)\in\GC\times\dom : F(z)= \alpha H(z)\}\\
  &\pi:(\alpha,z)\mapsto z.\\
\end{align*}

The isotropy groups for the
$\GC$-action on $\WC\setminus\{0\}$ have all the same
dimension (namely $8$)
due to Corollary~\ref{dim-c-isotropy}. 

Therefore we may apply Proposition~\ref{loc-eq}.
It follows that for every $p\in\dom$ there is an open neighborhood $M$
of $p$ in $\dom$
and a holomorphic map $\psi:M\to \GC$ with
\[
F(z)=\psi(z)H(z)\ \,\, \forall z\in M.
\]
In other words:
There are everywhere local holomorphic sections for $\pi:V\to \dom$.

Define
\begin{equation}
  \label{D-def}
L=\{z\in\dom:B(F(z),F(z))=0\},\ \Offen=\dom\setminus L
\end{equation}

Recall that %
\[
F(\dom)\not\subset\{v: B(v,v)=0\}.
\]

Thus $\Offen$ as defined in \eqref{D-def}
above is an open dense subset of $\dom$.

We recall that for $x\in\Offen$ the isotropy group at $x$
is isomorphic to $SL_3(\C)$
(Proposition~\ref{gen-SL3})
and therefore in particular simply-connected.%
\footnote{In the case $A=\H$ this group is isomorphic to $\C^*$ and
  thus not simply-connected. One reason, why we need different proofs
  in the two different cases.}

Recall
\[
V=\{(\alpha,z)\in\GC\times\dom : F(z)= \alpha H(z)\}
\]
Let $\pi:V\to\dom$  denote the natural projection
$(\alpha,z)\mapsto z$.
For $z\in\Offen$ we have $\pi^ {-1}(z)\simeq SL_3(\C)$.
Fix a point $p\in \Offen$ and let
\[
I=\{\alpha\in \GC:\alpha\cdot F(p)=F(p) \}.
\]

From Proposition~\ref{loc-triv} we deduce that $V\to \dom$ restricts
to a holomorphically locally trivial fiber bundle on $\Offen$
with 
fiber $I\simeq SL_3(\C)$ and a structure group which
is isomorphic to
\[
\left( N_{G_\C}(I)/Z_{G_\C}(I) \right)  \ltimes I
\]

Thus $\pi:V\to  \dom$ restricts to a locally trivial fiber bundle
over $\Offen$ whose structure group is a
(not necessarily connected) complex Lie group.

In view of the fact that $SL_3(\C)$ is simply-connected, we  infer
from Proposition~\ref{global-con} that $\pi:V\to\dom$ admits
a global continuous section.

Finally, we obtain a global holomorphic section (equivalently: a holomorphic
map $\phi$ with the desired properties)  by Proposition~\ref{oka}.
\end{proof}

\begin{proposition}\label{mainmainth}
  Let $\Oct$ be the algebra of octonions
  and $\Oct_\C=\Oct\tensor_\R\C$.

  Let $\dom\subset\C$ be a symmetric domain.
  
  Let $F,H:\dom\to \Oct_\C\setminus\{0\}$ be holomorphic maps.

  Assume that $\Tr(F)=\Tr(H)=0$ and $\Nm(F)=\Nm(H)$.
  
  Then there exists a holomorphic map $\phi:\dom\to\GC$ such that
  \[
  \phi(z)\left(F(z)\right)=H(z)\ \forall z\in\dom.
  \]
\end{proposition}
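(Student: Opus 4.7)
The plan is to prove Proposition~\ref{mainmainth} by a simple dichotomy that reduces to the two preceding propositions, namely Proposition~\ref{F-in-null-cone} and Proposition~\ref{F-not-in-null-cone}, which together are designed to cover all possible positions of the image of $F$ relative to the null cone of $B$.

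First, I would observe that the assumption $\Tr(F) = \Tr(H) = 0$ forces both $F$ and $H$ to take values in the imaginary subspace $\WC \subset \Octc$, so that the bilinear form $B$ is naturally available on their images. The function $z \mapsto B(F(z), F(z))$ is a holomorphic scalar function on the connected domain $\dom$, and by the identity theorem either it vanishes identically on $\dom$ or its zero locus is a discrete subset. This gives a natural dichotomy on the image of $F$.

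In the case $B(F(z), F(z)) \equiv 0$, i.e.\ $F(\dom) \subset \{v \in \WC : B(v,v) = 0\}$, the hypotheses of Proposition~\ref{F-in-null-cone} are satisfied (using also $\Nm(F) = \Nm(H)$ and $\Tr(H) = 0$ to place $H(\dom)$ in the same null cone), and the desired holomorphic $\phi$ is produced by that proposition, which relies on the fact that the null cone is a single $\GC$-orbit with connected isotropy and on the existence of holomorphic sections over non-compact Riemann surfaces (Proposition~\ref{section-2-con}). In the complementary case $F(\dom) \not\subset \{q \in \Octc : B(q,q) = 0\}$, the hypotheses of Proposition~\ref{F-not-in-null-cone} are satisfied verbatim, and we invoke that result directly.

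Since these two cases are exhaustive, the proof is essentially a clean dispatch to the prior machinery. The only mild subtlety to address is that Proposition~\ref{F-not-in-null-cone} is stated with the additional ``stem function'' symmetry $\overline{F(\bar z)} = F(z)$, $\overline{H(\bar z)} = H(z)$; a quick inspection of its proof shows that these conditions play no essential role, as the argument uses only $\Tr(F) = \Tr(H) = 0$, $\Nm(F) = \Nm(H)$, the orbit structure of $\GC$ on $\WC \setminus \{0\}$, the simple connectedness of the generic isotropy $SL_3(\C)$ (Proposition~\ref{gen-SL3}), Proposition~\ref{global-con}, and finally Proposition~\ref{oka}. Thus no genuine obstacle arises here: the real work was carried out in the proof of Proposition~\ref{F-not-in-null-cone}, and the present proposition merely packages the two cases into a single statement that will be fed into the proof of the Main Theorem~\ref{mainth}.
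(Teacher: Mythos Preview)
Your proof is correct and follows exactly the paper's approach: the paper's own proof is the single sentence ``Follows from Proposition~\ref{F-in-null-cone} and Proposition~\ref{F-not-in-null-cone}.'' Your observation that the stem-function symmetry hypotheses $\overline{F(\bar z)}=F(z)$, $\overline{H(\bar z)}=H(z)$ appearing in the statement of Proposition~\ref{F-not-in-null-cone} are never actually used in its proof is accurate and in fact necessary to make the paper's one-line deduction rigorous, since Proposition~\ref{mainmainth} does not carry those hypotheses.
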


\begin{proof}
  Follows from Proposition~\ref{F-in-null-cone}
  and Proposition~\ref{F-not-in-null-cone}.
\end{proof}

\begin{proof}[Proof of the Theorem~\ref{mainth}]
    The assertions of part $b)$ have been proved in
  Lemma~\ref{main-slice-pre}.

  Thus we may assume without loss of generality that neither $f$
  nor $h$ is slice preserving.
    
  We proceed as follows:
\[
  \begin{tikzcd}
    (i) \arrow[r,Leftrightarrow]
    & (ii) \arrow[r,Leftrightarrow]
     \arrow[d,Rightarrow]
    & (iii)\\
     & (iv)
    \arrow[ur,Rightarrow]
      \\
  \end{tikzcd}
\]

  $(ii)\implies(iv)$:

  By assumption we have  $\Tr(F)=\Tr(H)$.
  Define
  \[
  \hat F=\frac 12\left(F-F^ c\right),\quad
  \hat H=\frac 12\left(H-H^ c\right)
  \]
  Evidently $\Tr(\hat H)=\Tr(\hat F)=0$.
  Moreover $\Nm(F)=\Nm(H)$ in combination with Lemma~\ref{traceless}
  implies that $\Nm(\hat F)=\Nm(\hat H)$.

  With respect to the decomposition $\Oct_\C=\C\oplus\left(\WC\right)$ the map
  $\hat F$ resp.~$\hat H$ is just the second component of
  $F$ resp.~$H$.
  Recall that we discuss the case where neither $f$ nor $h$ is
  slice preserving. Hence neither $\hat F$ nor $\hat H$ are
  vanishing identically.
  
  By the definition of the central divisor (introduced in
  \paragraph\ref{ss-cdiv}) we may conclude that
  $\cdiv(F)=\cdiv(\hat F)$ and $\cdiv(H)=\cdiv(\hat H)$.
  
  Since $\cdiv(F)=\cdiv(H)$, it follows that there are holomorphic maps
  $\tilde F,\tilde H:\dom\to\Octc\setminus\{0\}$ and
  $h:\dom\to\C$ such that
  \[
  \hat F=h\tilde F, \quad\hat H=h\tilde H
  \]
  (Proposition~\ref{prop-reduce}, here multiplication
  by $h(z)$ means multiplying elements of $\Octc$
  via $\C\simeq{1}\tensor\C\subset\Oct\tensor_\R\C$.)
  
  Observe that
  \begin{align*}
  &0=\Tr(\hat F)=hTr(\tilde F),\ \Nm(\hat F)=h^ 2N(\tilde F)\\
  &0=\Tr(\hat H)=hTr(\tilde H),\ \Nm(\hat H)=h^ 2N(\tilde H)\\
  \end{align*}
  Hence $\Tr(\tilde F)=0=\Tr(\tilde H)$ and $\Nm(\tilde F)=\Nm(\tilde H)$
  and Proposition~\ref{mainmainth} implies that there is a holomorphic map
  $\phi:\dom\to\GC$ such that
  \[
  \phi(z)\left(\tilde F(z)\right)=\tilde H(z)\ \forall z\in\dom.
  \]
  which in turn implies
  \[
  \phi(z)\left(\hat F(z)\right)=\hat H(z)\ \forall z\in\dom,
  \]
  because $\Gc$ acts linearly,
  $\hat F=h\tilde F$ and $\hat H=h\tilde H$.
  Finally
  \[
  \phi(z)\left(F(z)\right)=H(z)\ \forall z\in\dom
  \]
  follows via Lemma~\ref{traceless}.
  
  $(iv)\implies(iii)$:
  The implication $(iv)\implies \cdiv(F)=\cdiv(H)$ is due to
  Lemma~\ref{div-unchanged}, the other assertion is obvious.
  
  For $(iii)\iff(ii)$ see Corollary~\ref{aut-if-n-tr}.

  For $(i)\iff(ii)$, see
  Proposition~\ref{n-tr-stem} and
  Section \ref{cdiv-stem}. 

  \end{proof}

\nocite{FH,SV,F2,B,G,MM,F,BW2,BW1,BW3,AB2,AB1,BS,BG,BM,GS2,GS1,GP,GPS}

\end{document}